\numberwithin{equation}{section}
\theoremstyle{plain}
\newtheorem{thm}{Theorem}[section]
\newtheorem{prop}[thm]{Proposition}
\newtheorem{cor}[thm]{Corollary}
\newtheorem{lemma}[thm]{Lemma}
\newtheorem{remark}[thm]{Remark}
\begin{document}
\title[Landau equation]
{The Gevrey Gelfand-Shilov regularizing effect\\ 
of the Landau equation with soft potential}

\author[X.-D. Cao, C.-J. Xu and Y. Xu ]
{ Xiao-Dong Cao \& Chao-Jiang Xu \& Yan Xu}

\address{Xiao-Dong Cao, Chao-Jiang Xu
\newline\indent
School of Mathematics and Key Laboratory of Mathematical MIIT,
\newline\indent
Nanjing University of Aeronautics and Astronautics, Nanjing 210016, China
\newline\indent
Yan Xu
\newline\indent
Department of Mathematical Sciences, Tsinghua University, Beijing 100084, China
}
\email{caoxiaodong@nuaa.edu.cn; xuchaojiang@nuaa.edu.cn; xu-y@mail.tsinghua.edu.cn}

\date{\today}

\subjclass[2010]{35B65,76P05,82C40}

\keywords{Spatially inhomogeneous Landau equation, Gevrey and Gelfand-Shilov smoothing effect, soft potential}

\begin{abstract}
This paper studies the Cauchy problem for the spatially inhomogeneous Landau equation with soft potential in the perturbative framework around the Maxwellian distribution. Under a smallness assumption on the initial datum with exponential decay in the velocity variable, we establish the optimal Gevrey Gelfand-Shilov regularizing effect for the solution to the Cauchy problem.

\end{abstract}

\maketitle

\section{Introduction}
The Cauchy problem for the spatially inhomogeneous Landau equation is given by 
\begin{equation}\label{1-1}
\left\{
\begin{aligned}
 &\partial_t F+v\ \cdot\ \partial_x F=Q(F,\ F),\\
 &F|_{t=0}=F_0,
\end{aligned}
\right.
\end{equation}
where $F=F(t, x, v)\ge0$ denotes the density distribution function at time $t\ge0$, with position $x\in\mathbb{T}^3$ and velocity $v\in\mathbb R^3$. The Landau collision operator $Q$, which is bilinear with respect to the velocity variable, is defined by 
\begin{equation*}
    Q(G, F)(v)=\sum_{j, k=1}^3\partial_j\bigg(\int_{\mathbb R^3}a_{jk}(v-v_*)[G(v_*)\partial_kF(v)-\partial_kG(v_*)F(v)]dv_*\bigg),
\end{equation*}
where the non-negative symmetric matrix $\left(a_{jk}\right)$ is given by  
\begin{equation}\label{matrix A}
   a_{jk}(v)=(\delta_{jk}|v|^2-v_jv_k)|v|^\gamma,\quad \gamma\ge-3.
\end{equation}
The parameter $\gamma$ leads to the classification of hard potential if $\gamma>0$, Maxwellian molecules if $\gamma=0$, soft potential if $-3<\gamma<0$ and Coulombian potential if $\gamma=-3$.

The Landau equation is one of the fundamental kinetic models, derived as the grazing collision limit of the Boltzmann equation~\cite{V2}. Extensive research has been conducted on the spatially homogeneous case, in which the distribution function is independent of the spatial variable. In a pioneering work, Desvillettes and Villani~\cite{DV} established the smoothness of solutions to the spatially homogeneous Landau equation with hard potentials. The analytic smoothing effects were later obtained in~\cite{CLX1, L-1}, while the Gevrey regularity was studied in~\cite{CLX2, CLX3}. Moreover, the analytic Gelfand–Shilov smoothing effect was proved in~\cite{LX-5} under a perturbative framework near the normalized global Maxwellian. For Maxwellian molecules, the existence, uniqueness, and smoothness of solutions were investigated in~\cite{V1}, under the assumption that the initial data have finite mass and energy. The analytic and Gelfand-Shilov regularity properties were subsequently studied in~\cite{LX, M-1, MX}. In the case of soft potentials, existence and uniqueness results can be found in~\cite{FG, V2, W}. Regarding regularity,~\cite{LX2} showed that solutions to the linear Landau equation with soft potentials exhibit analytic smoothing. The Gelfand–Shilov regularizing effect for moderately soft potentials was further addressed in~\cite{LX3}.


In this paper, we consider the linearization of the Landau equation \eqref{1-1} around the Maxwellian distribution
    $\mu(v)=(2\pi)^{-\frac32}e^{-\frac{|v|^2}{2}}$,
and the fluctuation of the density distribution function
    $F=\mu+\sqrt\mu f$.
Since $Q(\mu, \mu)=0$, the Cauchy problem \eqref{1-1} is reduced to the form
\begin{equation}\label{1-2}
\left\{
\begin{aligned}
 &\partial_tf+v\ \cdot\ \partial_x f+\mathcal Lf=\Gamma(f, f),\\
 &f|_{t=0}=f_0,
\end{aligned}
\right.
\end{equation}
with the initial condition $F_0=\mu+\sqrt\mu f_0$, here the nonlinear Landau operator $\Gamma$ is defined by
$$\Gamma(f, f)=\mu^{-\frac12}Q(\sqrt\mu f, \sqrt\mu f)$$
and the linear Landau operator $\mathcal{L}$ is decomposed as
\begin{equation*}
\mathcal{L}=\mathcal{L}_1+\mathcal{L}_2,\ \ \ \mbox{with}\ \ \    \mathcal L_1f=-\Gamma(\sqrt\mu, f), \quad \mathcal L_2f=-\Gamma(f, \sqrt\mu).
\end{equation*}

In the perturbative framework, Guo~\cite{G-1} established the global-in-time existence and uniqueness of solutions to the spatially inhomogeneous Landau equation in Sobolev spaces. In~\cite{CLL}, Chen, Desvillettes, and He investigated the smoothing effects for classical solutions. Duan, Liu, Sakamoto, and Strain~\cite{DLSR} proved the existence of solutions with mild initial data. Furthermore, the smoothing properties of weak solutions with initial data bounded by a Gaussian in the velocity variable were studied in~\cite{H-1}.

Under the setting of the perturbation near global equilibrium, the analytic smoothing effect for the nonlinear Landau equation with Maxwellian molecules and small initial data in $H_x^r(L_v^2)$ (with $r > \frac{3}{2}$) was established in~\cite{M-2}. Additionally, the analytic smoothing effect in both spatial and velocity variables for hard potentials has been discussed in~\cite{C-L-X}, while the analytic Gelfand–Shilov regularizing effect has been addressed in~\cite{X}.

Now, we introduce the function space. Let $\Omega\subset\mathbb R^{3}$ be an open domain. For $s > 0$, the Gevrey class $G^{s}(\Omega)$ consists of all smooth functions $u$ such that there exists a constant $C > 0$ satisfying
$$
\|\partial^{\alpha}_{x}u\|_{L^{2}(\Omega)}\le C^{|\alpha|+1} (\alpha!)^{s}, \quad \forall\alpha\in\mathbb N^{3}. 
$$
For $\sigma, \nu > 0$ with $\sigma + \nu \ge 1$, the Gelfand--Shilov space $S^{\sigma}_{\nu}(\mathbb{R}^n)$ consists of all smooth functions $u$ for which there exists a constant $C > 0$ such that
\begin{align}\label{GS}
\|x^{\beta} \partial^{\alpha}_{x} u\|_{L^{2}(\mathbb{R}^n)} \le C^{|\alpha| + |\beta| + 1} (\alpha!)^{\sigma} (\beta!)^{\nu}, \quad \forall\, \alpha, \beta \in \mathbb{N}^n.
\end{align}
So that, the function of  the Gelfand--Shilov space $S^{\sigma}_{\nu}(\mathbb{R}^n)$ is belongs to Gevrey class $G^\sigma(\mathbb R^n)$ with an exponential decay, such as
$$
e^{c_0 \langle x\rangle ^{\frac 1\nu}} u\in L^2(\mathbb R^n).
$$

Before stating our main result, we introduce some notations. For simplicity, we denote $L^{2}_{x,v} = L^{2}(\mathbb{T}^{3}_{x} \times \mathbb{R}^{3}_{v})$ and $H^3_x L^2_v = H^3(\mathbb{T}^3_x; L^2(\mathbb{R}^3_v))$. For some $c_{0} > 0$ and $0< b \le 2$, we denote
\[
\omega_{t}(v) = e^{\frac{c_{0}}{1 + t} \langle v \rangle^{b}}, \quad t \ge 0,\ v \in \mathbb{R}^{3},
\]
where $\langle v \rangle = (1 + |v|^2)^{\frac{1}{2}}$. We also define the weighted Sobolev space 
$$H^3_x L^2_v(\omega_{t})=\{f: \ \|f\|_{H^3_x L^2_v(\omega_{t})}^{2}=\sum_{|\alpha|\le3}\|\omega_{t}\partial^{\alpha}_{x}f\|_{L^{2}_{x, v}}^{2}<\infty\}.$$
Our main result is restricted to the case $-3<\gamma<0$ and stated as follows,

\begin{thm}\label{main result}
Assume that the initial datum $\|f_{0}\|_{H^{3}_{x}L^{2}_{v}(\omega_{0})}$ small enough, then the Cauchy problem \eqref{1-2} admits a unique solution satisfying $\omega_t f(t) \in G^{\sigma}\left(\mathbb{T}^3_x; S^{\sigma}_{\sigma}\left(\mathbb{R}^3_{v}\right)\right)$ for $t > 0$  with $\sigma = \max\left\{1, \frac{b - \gamma}{2b} \right\}$.
Moreover, for any $T > 0$ and $\lambda > 2\sigma$,
there exist constants $C, \ \tilde{C} > 0$ such that for any $\alpha, \tilde{\alpha}, \beta \in \mathbb{N}^3$, the following estimate holds:
\begin{align}\label{Gelfand-Shilov}
\left\|v^\beta \partial^\alpha_v \partial^{\tilde{\alpha}}_{x} f(t) \right\|_{H^{3}_{x}L^{2}_{v}(\omega_t)} \le 
\left( \left( \frac{C}{t^{\lambda + 1}} \right)^{|\tilde{\alpha}| + 1} 
\left( \frac{\tilde{C}}{t^{\lambda}} \right)^{|\alpha| + |\beta| + 1} 
\alpha! \, \tilde{\alpha}! \, \beta! \right)^{\sigma}, 
\quad  0 < t \le T.
\end{align}
\end{thm}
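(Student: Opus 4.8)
The plan is to run a standard but technically demanding \emph{a priori estimate plus bootstrap} scheme. First I would establish the base case: the local (and, thanks to smallness, global-in-time on $[0,T]$) existence and uniqueness of a solution $f$ to \eqref{1-2} in the weighted energy space $L^\infty_t H^3_x L^2_v(\omega_t)$, together with the dissipation estimate coming from the coercivity of the linear Landau operator $\mathcal L$. Here the time-decaying weight $\omega_t(v)=e^{\frac{c_0}{1+t}\langle v\rangle^b}$ is essential: differentiating $\omega_t$ in $t$ produces a \emph{good} negative term $-\frac{c_0}{(1+t)^2}\langle v\rangle^b|\omega_t f|^2$ that absorbs the loss of weight generated when commuting $\langle v\rangle$-powers through the collision operator (recall that for soft potentials $a_{jk}$ grows only like $|v|^{\gamma+2}$, so the anisotropic diffusion degenerates at large $v$, forcing exactly this kind of weight trade-off). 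This is the mechanism that makes the $b$-exponential-weight bookkeeping close.

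Next I would introduce the family of directional weighted norms adapted to the three kinds of derivatives appearing in \eqref{Gelfand-Shilov}: pure space derivatives $\partial_x^{\tilde\alpha}$, velocity derivatives $\partial_v^\alpha$, and velocity multipliers $v^\beta$. Following the now-classical approach (as in \cite{X, C-L-X, LX-5}), the right quantity to control is something like
\begin{align*}
\mathcal N_{p,q,k}(t)=\sup_{|\tilde\alpha|=p,\ |\alpha|\le q,\ |\beta|\le k}\frac{t^{\sigma\big((\lambda+1)p+\lambda(q+k)\big)}}{(p!\,q!\,k!)^{\sigma}}\ \big\|v^\beta\partial_v^\alpha\partial_x^{\tilde\alpha} f(t)\big\|_{H^3_xL^2_v(\omega_t)} ,
\end{align*}
and to prove, via induction on the total order $p+q+k$, that $\mathcal N_{p,q,k}$ stays bounded uniformly in $p,q,k$ by the appropriate powers of constants $C,\tilde C$. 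The induction step requires applying $v^\beta\partial_v^\alpha\partial_x^{\tilde\alpha}$ to the equation, performing the weighted $L^2$ energy estimate, and carefully classifying the commutators: the transport term $v\cdot\partial_x$ produces the exchange $\partial_v\leftrightarrow\partial_x$ with a \emph{shift of time weight} (one $\partial_v$ is traded for one $\partial_x$ but at the cost of a factor $t^{-1}$, which is why the $x$-exponent $\lambda+1$ is one unit larger than the $v$-exponent $\lambda$); the multiplier $v^\beta$ commuted with $\partial_v$ lowers $|\beta|$; and the nonlinear term $\Gamma(f,f)$ together with $\mathcal L_2 f$ must be estimated by Leibniz expansion so that all but a controlled number of derivatives land on a low-order factor bounded by the energy norm. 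The combinatorial factors $\binom{p}{p'}\binom{q}{q'}\binom{k}{k'}$ produced by Leibniz combine with the ratios of factorials to give convergent sums precisely because of the $\sigma$-powers, and the extra powers of $t$ are arranged to match — this is the bookkeeping that forces $\sigma=\max\{1,\frac{b-\gamma}{2b}\}$ and $\lambda>2\sigma$.

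The coercivity/upper-bound estimates for $\mathcal L$ and $\Gamma$ in the anisotropic weighted norms — i.e.\ the Landau analogues of the trilinear estimate $|\langle \Gamma(f,g),h\rangle|\lesssim \|f\|\,\|g\|_\sigma\,\|h\|_\sigma$ with the degenerate soft-potential weights — I would quote or adapt from the existing literature (\cite{G-1, CLL, DLSR}) rather than reprove; the novelty is in propagating them through the derivative hierarchy with the time weights. The main obstacle I anticipate is the soft-potential degeneracy: unlike the hard-potential case, the diffusion coefficient $\langle v\rangle^{\gamma+2}$ does \emph{not} dominate the $\langle v\rangle^2$ weight loss, so every commutator that produces a stray $\langle v\rangle$-power must be paid for either by the $t$-derivative of $\omega_t$ (limited budget, since it scales like $(1+t)^{-2}$) or by sacrificing a fractional velocity-derivative gain — balancing these two resources across all three indices $(p,q,k)$ simultaneously, while keeping the constants independent of the order, is the delicate heart of the argument and explains the appearance of $\frac{b-\gamma}{2b}$ in the Gevrey index. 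A secondary technical point is that the estimate \eqref{Gelfand-Shilov} is claimed for the \emph{full range} $-3<\gamma<0$ including the borderline very soft potentials, so one must check that the induction closes without any further restriction on $\gamma$ beyond what is already encoded in $\sigma$ and in the smallness of $\|f_0\|_{H^3_xL^2_v(\omega_0)}$.
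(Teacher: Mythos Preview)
Your outline captures the correct high-level mechanisms---the time-decaying weight $\omega_t$ supplying a good term $-\tfrac{c_0}{(1+t)^2}\langle v\rangle^b$, the interpolation between $\langle v\rangle^{b/2}$- and $\langle v\rangle^{\gamma/2}$-weighted norms that produces the index $\tfrac{b-\gamma}{2b}$, and an induction with time-weighted factorials---but the paper takes a substantially different route. It does \emph{not} apply $v^\beta\partial_v^\alpha\partial_x^{\tilde\alpha}$ directly. Instead it works with the auxiliary vector fields
\[
H_\delta=\tfrac{1}{\delta+1}t^{\delta+1}\partial_{x_1}-t^{\delta}A_{+,1},\qquad A_{+,1}=\tfrac{v_1}{2}-\partial_{v_1},
\]
chosen so that $[H_\delta,\partial_t+v\cdot\nabla_x]=\delta t^{\delta-1}A_{+,1}$. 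Two such fields $H_{\delta_1},H_{\delta_2}$ (with distinct $\delta_j$ determined by $\lambda,b,\gamma$) are used, the induction runs on $m+n$ for $\|\omega_t H_{\delta_1}^mH_{\delta_2}^nf\|_{H^3_xL^2_v}$, and only at the end are $\partial_{x_1}$ and $A_{+,1}$ recovered from $H_{\delta_1},H_{\delta_2}$ by linear algebra; the full $S^\sigma_\sigma$ statement with $v^\beta\partial_v^\alpha$ then follows from a known characterisation via creation operators. The gains are that $[H_\delta,A_{+,k}]=0$, so a clean Leibniz formula holds for $H_\delta^m$ acting on $\Gamma$, only two indices are tracked, and the commutators with $\mathcal L_1$ (written as $A_{+,j}((a_{jk}*\mu)A_{-,k}\,\cdot\,)$) stay manageable.

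The concrete gap in your plan is the transport commutator. With raw derivatives, $[\partial_{v_j},v\cdot\partial_x]=\partial_{x_j}$ moves $(|\tilde\alpha|,|\alpha|)\to(|\tilde\alpha|+1,|\alpha|-1)$ at the \emph{same} total order, so ``induction on $p+q+k$'' does not reduce to strictly lower order; one gets a coupled system across all $(p,q)$ with $p+q$ fixed. Your sentence about the $t^{-1}$ shift explains why the exponents $\lambda+1$ and $\lambda$ differ, but not how the coupled system closes with the sharp $\sigma$. In the paper this coupling still appears (once $A_{+,1}$ is re-expressed through $H_{\delta_1},H_{\delta_2}$), but it lands on the $\langle v\rangle^{b/2}$-weighted norm with a small prefactor $\varepsilon$ and is absorbed into the good term coming from $\partial_t\omega_t$; that absorption, together with the interpolation $\|g\|_{L^2}^2\le \varepsilon_1\|\langle v\rangle^{b/2}g\|_{L^2}^2+\varepsilon_1^{-(-\gamma)/b}\|\langle v\rangle^{\gamma/2}g\|_{L^2}^2$, is precisely what fixes $\sigma=\max\{1,\tfrac{b-\gamma}{2b}\}$. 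Tracking $v^\beta$ and $\partial_v^\alpha$ separately would also multiply the commutator bookkeeping with $\mathcal L_1$ and $\Gamma$ considerably. So the missing ingredient in your sketch is the vector-field packaging $H_\delta$; without it, the induction as described is unlikely to close.
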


\begin{remark}\label{remark1.2}
The existence and uniqueness of the Landau equation in Sobolev space had been addressed in~\cite{G-1} for all $\gamma\ge-3$. The results of~\cite{CLX} show the solution of the Landau equation belongs to $C^{\infty}(]0, \infty[; \cap_{s\ge0}H^{\infty, s}_{x, v}(\mathbb T^{3}_{x}\times\mathbb R^{3}_{v}))$ with 
the initial datum $\|f_{0}\|_{H^{3}_{x}L^{2}_{v}}\ll 1$. Under the assumptions of Theorem~\ref{main result}, the proof of the existence of the weak solution is similar to that of Proposition 4.1 in~\cite{C-X-X}. 
\end{remark}
\begin{remark}
In~\cite{HJL}, He, Ji and Li established Gevrey regularity with the index $\max\left\{\frac{2-\gamma}{4s}, 1\right\}$ for the Boltzmann equation without angular cutoff of index $0<s<1$ for soft potentials, with a certain exponential weight $e^{a_{0}\langle v\rangle^{2}}$ assumption on initial datum. Our work uses a more general initial condition and obtains the Gevrey Gelfand-Shilov smoothing effect 
\begin{align*}
    \omega_{t}\, f(t) \in G^{\sigma}(\mathbb{T}^3_x;  S^{\sigma}_{\sigma}(\mathbb{R}^3_{v})), \quad
      0< t , \quad  \sigma = \max\left\{1, \frac{b - \gamma}{2b} \right\}.
\end{align*}
This indicates that the solution is in exponential decay for velocity variables,
\[
e^{\frac{c_0}{1+T} \langle v \rangle^b + c_1 t^{\lambda} \langle v \rangle^{\frac1\sigma}} f(t) \in H^{3}_{x} L^{2}_{v}, \quad 0 < t , 
\]
then it improves the decreasing rate concerning the initial date if $b<\gamma+2$. On the other hand, if $\gamma\ge -b$, we get the analyticity of velocity and position variables with an exponential decay of velocity variables, 
$$
\omega_{t}\, f(t)\in G^1(\mathbb T^3; S^1_1(\mathbb R^3)),\quad 0<t.
$$

To get everything rigorous, and in particular to take care of the loss of weight appearing on the initial data, we need to interpolate with $L^{2}$ space and obtain the regularity for $-2<\gamma<0$.
\end{remark}

\section{Methodology and preliminary results}\label{s2}

Throughout the paper, the notation $A \lesssim B$ denotes that there exists a constant $C > 0$ such that $A \leq C B$. The symbol $[\cdot, \cdot]$ indicates the commutator between two operators.
In the following, we denote the weighted Lebesgue spaces
\begin{equation*}
    \|\langle\cdot\rangle^r f\|_{L^p(\mathbb R^3)}=\|f\|_{L^p_{r}(\mathbb R^3)},\quad 1\le p\le\infty, \ r\in\mathbb R.
\end{equation*}
For the matrix $(a_{jk})$ defined in \eqref{matrix A}, we denote $\bar a_{jk}=a_{jk}*\mu$ and the norm
\begin{align*}
    ||f||^2_\sigma&=\int\left(\bar a_{jk}\partial_jf\partial_kf+\frac14\bar a_{jk}v_jv_kf^2\right)dv, \quad |||f|||^2=\sum_{|\alpha|\le3}\int_{\mathbb T^{3}_{x}}\|\partial^{\alpha}_{x}f(x, \cdot)\|^2_\sigma dx.
\end{align*}
From Corollary 1 of~\cite{G-1}, for $\gamma\ge-3$, there exists a constant $C_1>0$ such that 
\begin{align}\label{upper bound}
     ||f||^2_{\sigma}&\ge C_{1}\left(\|\langle \cdot\rangle^{\frac{\gamma}{2}}\nabla_{v} f\|^2_{L^{2}(\mathbb R^{3}_{v})}+\|\langle \cdot\rangle^{\frac{\gamma+2}{2}}f\|^2_{L^{2}(\mathbb R^{3}_{v})}\right).   
\end{align}
We now define the creation and annihilation operators, as well as the gradient associated with the operator
$
\mathcal{H} = -\Delta_v + \frac{|v|^2}{4},
$
as follows:
\begin{equation*}
    A_{\pm, k} = \frac{1}{2}v_k \mp \partial_{v_k}, \ (1 \le k \le 3), \quad A_{\pm}^{\alpha} = A_{\pm, 1}^{\alpha_1} A_{\pm, 2}^{\alpha_2} A_{\pm, 3}^{\alpha_3}, \ (\alpha \in \mathbb{N}^3), \quad \nabla_{\mathcal{H}_{\pm}} = (A_{\pm, 1}, A_{\pm, 2}, A_{\pm, 3}).
\end{equation*}
The Proposition 2.3 of~\cite{LX3} shows that for $-3<\gamma<0$,
\begin{align}\label{upper bound1}
\begin{split}
    ||f||^2_{\sigma}&\ge C_{1}\left(\|\langle \cdot\rangle^{\frac{\gamma}{2}}\mathbf P_{v}\nabla_{\mathcal H_{\pm}} f\|^2_{L^{2}(\mathbb R^{3}_{v})}+\|\langle \cdot\rangle^{\frac{\gamma+2}{2}}(\mathbf I-\mathbf P_{v})\nabla_{\mathcal H_{\pm}} f\|^2_{L^{2}(\mathbb R^{3}_{v})}\right)\ge C_{1}\|\langle \cdot\rangle^{\frac{\gamma}{2}}\nabla_{\mathcal H_{\pm}} f\|^2_{L^{2}(\mathbb R^{3}_{v})},
\end{split}    
\end{align}
where $\mathbf P_{v}$ is the projection to the vector $v=(v_{1}, v_{2}, v_{3})$ defined via
$$(\mathbf P_{v} G)_{j}=\sum_{k=1}^{3}G_{k}v_{k}\frac{v_{j}}{|v|^{2}}, \quad G=(G_{1}, G_{2}, G_{3}).$$

First, we recall two results that have been established in the existing literature. In what follows, we adopt the convention of implicit summation over repeated indices.
\begin{lemma}~\cite{LX3}\label{representations}
For $f, g\in\mathcal S(\mathbb R^{3}_{v})$, we have
$$\mathcal L_{1}f=A_{+, j}\left((a_{jk}*\mu)A_{-, k}f\right),\ \mathcal L_{2}f=-A_{+, j}\left(\sqrt\mu(a_{jk}*(\sqrt\mu A_{-, k}f))\right),$$
$$\Gamma(f, g)=A_{+, j}\left((a_{jk}*(\sqrt\mu f))A_{+, k}g\right)-A_{+, j}\left((a_{jk}*(\sqrt\mu A_{+, k}f))g\right).$$
\end{lemma}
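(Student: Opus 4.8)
The plan is to verify all three identities by direct computation, starting from the definitions $\mathcal L_1 f=-\Gamma(\sqrt\mu,f)=-\mu^{-\frac12}Q(\mu,\sqrt\mu f)$, $\mathcal L_2 f=-\Gamma(f,\sqrt\mu)=-\mu^{-\frac12}Q(\sqrt\mu f,\mu)$ and $\Gamma(f,g)=\mu^{-\frac12}Q(\sqrt\mu f,\sqrt\mu g)$, and then simplifying with a small set of elementary relations. Since $f,g\in\mathcal S(\mathbb R^3_v)$, every integral below is absolutely convergent and all the manipulations (differentiation under the $v_*$-integral, Fubini, the change of variable $w=v-v_*$) are legitimate with no boundary contributions; this is the only place the Schwartz hypothesis is used.

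The two structural ingredients are the following. First, the conjugation rules for the Gaussian weight: since $\partial_{v_k}\sqrt\mu=-\tfrac12 v_k\sqrt\mu$ and $\partial_{v_k}\mu=-v_k\mu$, one has for any smooth $h$
\begin{equation*}
\partial_{v_k}(\sqrt\mu\,h)=\sqrt\mu\Big(\partial_{v_k}-\tfrac12 v_k\Big)h=-\sqrt\mu\,A_{+,k}h,\qquad \mu^{-\frac12}\partial_{v_j}(\sqrt\mu\,h)=-A_{+,j}h,\qquad \mu^{-\frac12}\partial_{v_j}(\mu\,h)=-A_{+,j}(\sqrt\mu\,h),
\end{equation*}
the last identity coming from writing $\mu=\sqrt\mu\cdot\sqrt\mu$. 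Together with these I will use the purely algebraic relation $A_{+,k}+A_{-,k}=v_k$, equivalently $A_{+,k}h-v_k h=-A_{-,k}h$. Second, the Landau matrix annihilates its argument: from \eqref{matrix A}, $\sum_{k=1}^3 a_{jk}(w)w_k=\big(|w|^2 w_j-w_j|w|^2\big)|w|^\gamma=0$ for each $j$. Consequently, splitting $v_k=v_{*k}+(v_k-v_{*k})$ inside $\int a_{jk}(v-v_*)v_k h(v_*)\,dv_*$ and changing variables $w=v-v_*$ in the second piece yields the transfer identity $\sum_k v_k\,(a_{jk}*h)(v)=\sum_k\big(a_{jk}*(v_k h)\big)(v)$, which is what lets an ``outer'' multiplication by $v_k$ be absorbed into the convolution.

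With these tools the three formulas follow by bookkeeping. For $\mathcal L_1$: inserting $\partial_{v_k}(\sqrt\mu f)(v)=-\sqrt\mu(v)A_{+,k}f(v)$ and $\partial_{v_{*k}}\mu(v_*)=-v_{*k}\mu(v_*)$ into $Q(\mu,\sqrt\mu f)$, pulling the $v$-dependent factors out of the $v_*$-integral, and applying the transfer identity and then $A_{+,k}f-v_kf=-A_{-,k}f$, one arrives at $Q(\mu,\sqrt\mu f)=\partial_{v_j}\big(\sqrt\mu\,(a_{jk}*\mu)\,A_{-,k}f\big)$; the conjugation rule $\mu^{-\frac12}\partial_{v_j}(\sqrt\mu\,\cdot)=-A_{+,j}$ then gives $\mathcal L_1 f=A_{+,j}\big((a_{jk}*\mu)A_{-,k}f\big)$. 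The computation for $\mathcal L_2$ is parallel: now the $v$-Gaussian enters through $\partial_{v_k}\mu=-v_k\mu$ and the $v_*$-derivative hits $\sqrt\mu f$, so after the same transfer step and the identity $-v_kf+A_{+,k}f=-A_{-,k}f$ one gets $Q(\sqrt\mu f,\mu)=-\partial_{v_j}\big(\mu\,(a_{jk}*(\sqrt\mu A_{-,k}f))\big)$, and $\mu^{-\frac12}\partial_{v_j}(\mu\,\cdot)=-A_{+,j}(\sqrt\mu\,\cdot)$ produces $\mathcal L_2 f=-A_{+,j}\big(\sqrt\mu\,(a_{jk}*(\sqrt\mu A_{-,k}f))\big)$. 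For $\Gamma(f,g)$ no transfer identity is needed: substituting $\partial_{v_k}(\sqrt\mu g)=-\sqrt\mu A_{+,k}g$ and $\partial_{v_{*k}}(\sqrt\mu f)=-\sqrt\mu A_{+,k}f$ into $Q(\sqrt\mu f,\sqrt\mu g)$, the factors $A_{+,k}g(v)$ and $g(v)$ come out of the $v_*$-integral turning it into the convolutions $a_{jk}*(\sqrt\mu f)$ and $a_{jk}*(\sqrt\mu A_{+,k}f)$, and one concludes again with $\mu^{-\frac12}\partial_{v_j}(\sqrt\mu\,\cdot)=-A_{+,j}$, obtaining $\Gamma(f,g)=A_{+,j}\big((a_{jk}*(\sqrt\mu f))A_{+,k}g\big)-A_{+,j}\big((a_{jk}*(\sqrt\mu A_{+,k}f))g\big)$.

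The only genuinely non-routine ingredient is the orthogonality $\sum_k a_{jk}(w)w_k=0$ and the resulting transfer identity; everything else is careful sign tracking and the legitimacy of the symbolic manipulations, which the Schwartz assumption on $f,g$ guarantees. I expect no essential difficulty beyond this bookkeeping.
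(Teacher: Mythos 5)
Your proposal is correct: the conjugation rules $\mu^{-\frac12}\partial_{v_j}(\sqrt\mu\,h)=-A_{+,j}h$, $\mu^{-\frac12}\partial_{v_j}(\mu\,h)=-A_{+,j}(\sqrt\mu\,h)$, the relation $A_{+,k}+A_{-,k}=v_k$, and the orthogonality $\sum_k a_{jk}(w)w_k=0$ (giving the transfer identity $\sum_k v_k(a_{jk}*h)=\sum_k a_{jk}*(v_kh)$) are exactly what is needed, and the sign bookkeeping in all three identities checks out. The paper itself gives no proof, quoting the lemma from \cite{LX3}, and your direct computation is essentially the same standard derivation carried out there.
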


\begin{lemma}~\cite{XX}\label{lemma2.1}
    Let $-3<\gamma<0$, then for any $0<\epsilon_1<1$, there exists a constant $C_{\epsilon_1}>0$ such that for any suitable function $f$
    \begin{align*}
       (1-\epsilon_1)\|f\|^2_{\sigma}\le (\mathcal L_1f,  f)_{L^2}+C_{\epsilon_1}\|f\|^2_{2, \frac{\gamma}{2}}.
    \end{align*}
\end{lemma}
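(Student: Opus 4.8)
The plan is to compute $(\mathcal L_1 f,f)_{L^2}$ \emph{exactly}, recognize $\|f\|_\sigma^2$ as its leading part, and show that the remainder is a purely zeroth-order quantity that can be split into an $\epsilon_1\|f\|_\sigma^2$ piece and a $C_{\epsilon_1}\|f\|_{2,\gamma/2}^2$ piece.

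First, I would invoke Lemma~\ref{representations}, which gives $\mathcal L_1 f=A_{+,j}\big(\bar a_{jk}A_{-,k}f\big)$, together with the fact that $A_{+,j}$ and $A_{-,j}$ are formal $L^2(\mathbb R^3_v)$-adjoints, i.e.\ $(A_{+,j}u,w)_{L^2}=(u,A_{-,j}w)_{L^2}$, to write
\begin{equation*}
(\mathcal L_1 f,f)_{L^2}=\int_{\mathbb R^3}\bar a_{jk}\,(A_{-,k}f)(A_{-,j}f)\,dv .
\end{equation*}
Expanding $A_{-,k}f=\tfrac12 v_k f+\partial_k f$ and using $\bar a_{jk}=\bar a_{kj}$, the non-cross terms reproduce exactly $\|f\|_\sigma^2$ while the two cross terms collapse into one, so that $(\mathcal L_1 f,f)_{L^2}=\|f\|_\sigma^2+R(f)$ with $R(f)=\int\bar a_{jk}\,v_k f\,\partial_j f\,dv$. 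It then only remains to control $R(f)$.

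To do this I would symmetrize $R(f)=\tfrac12\int\bar a_{jk}v_k\,\partial_j(f^2)\,dv$ and integrate by parts, obtaining $R(f)=-\tfrac12\int g(v)\,f^2\,dv$ with $g(v)=\bar b_k(v)v_k+\mathrm{tr}\,\bar a(v)$, where $\bar b_k=b_k*\mu$ and $b_k=\partial_j a_{jk}=-2|v|^\gamma v_k$. The key point is that, after writing $v_k=(v-v_*)_k+v_{*k}$ inside the convolution and using the pointwise identities $b_k(w)w_k=-2|w|^{\gamma+2}$ and $a_{kk}(w)=2|w|^{\gamma+2}$, the two dangerous $\langle v\rangle^{\gamma+2}$-order contributions cancel, leaving
\begin{equation*}
g(v)=\int_{\mathbb R^3}b_k(v-v_*)\,v_{*k}\,\mu(v_*)\,dv_*,
\end{equation*}
which obeys $|g(v)|\lesssim\langle v\rangle^{\gamma+1}$ (split the $v_*$-integral at $|v_*|=|v|/2$, the far region being super-exponentially small thanks to $\mu$; one could even push this to $\langle v\rangle^{\gamma}$ using $\int v_*\mu\,dv_*=0$, but that is not needed). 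Finally, for $R\ge1$, decomposing $\langle v\rangle^{\gamma+1}=\langle v\rangle^{\gamma+1}\mathbf{1}_{|v|\le R}+\langle v\rangle^{-1}\langle v\rangle^{\gamma+2}\mathbf{1}_{|v|>R}$ and using \eqref{upper bound} to absorb $\int\langle v\rangle^{\gamma+2}f^2\,dv\le C_1^{-1}\|f\|_\sigma^2$, I get $|R(f)|\le C_R\|f\|_{2,\gamma/2}^2+\tfrac{C}{\langle R\rangle}\|f\|_\sigma^2$; choosing $R$ large enough that $C/\langle R\rangle\le\epsilon_1$ and setting $C_{\epsilon_1}:=C_R$ yields $(\mathcal L_1 f,f)_{L^2}=\|f\|_\sigma^2+R(f)\ge(1-\epsilon_1)\|f\|_\sigma^2-C_{\epsilon_1}\|f\|_{2,\gamma/2}^2$, which is the assertion.

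The main obstacle will be the cancellation in $g$: a priori both $\bar b_k(v)v_k$ and $\mathrm{tr}\,\bar a(v)$ grow like $\langle v\rangle^{\gamma+2}$, and if no cancellation occurred the remainder $R(f)$ would only be comparable to $\|f\|_\sigma^2$ rather than negligible against it, so the lemma would fail. Making this cancellation rigorous and quantifying the residual growth of $g$ is the one genuinely computational step; everything else reduces to integration by parts, the adjointness of $A_{\pm}$, and the already-available coercivity bound \eqref{upper bound}.
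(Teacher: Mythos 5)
Your proposal is correct. Note that the paper does not prove this lemma at all: it is quoted from the reference [XX] (Xu--Xu, \emph{Anal.\ Theory Appl.}\ 2024), so there is no in-paper argument to compare against; what you have written is a complete, self-contained derivation built from exactly the tools the paper makes available. The identity $(\mathcal L_1 f,f)_{L^2}=\int\bar a_{jk}(A_{-,k}f)(A_{-,j}f)\,dv$ via Lemma~\ref{representations} and the adjointness of $A_{\pm,j}$ is right, the non-cross terms do reproduce $\|f\|_\sigma^2$ exactly, and your computation of the zeroth-order remainder is sound: $b_k=\partial_j a_{jk}=-2|v|^\gamma v_k$, $\mathrm{tr}\,a=2|v|^{\gamma+2}$, and the substitution $v_k=(v-v_*)_k+v_{*k}$ makes the two $\langle v\rangle^{\gamma+2}$-order contributions cancel, leaving $g(v)=\int b_k(v-v_*)v_{*k}\mu(v_*)\,dv_*$, which by \eqref{singular integration} (applicable since $\gamma+1>-2>-3$, with the Gaussian absorbing $|v_*|$) is $O(\langle v\rangle^{\gamma+1})$. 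The final splitting at $|v|=R$ together with \eqref{upper bound} to absorb the tail into $\epsilon_1\|f\|_\sigma^2$, with $C_{\epsilon_1}$ coming from the bounded region, gives precisely the stated inequality; this is also the standard route one would expect in [XX] (an expansion of the Dirichlet form plus control of a compact/lower-order remainder, in the spirit of Guo's coercivity estimates). The only cosmetic remark is that your aside about improving $g$ to $O(\langle v\rangle^{\gamma})$ using $\int v_*\mu\,dv_*=0$ is unnecessary for the lemma and would require the extra first-order Taylor step you hint at; as written, the $\langle v\rangle^{\gamma+1}$ bound suffices.
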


We observe that the same argument gives us the following inequality
    \begin{align}\label{L1}
       (1-\epsilon_1)|||f|||^2\le (\mathcal L_1f,  f)_{H^{3}_{x}L^2_{v}}+C_{\epsilon_1}\|\langle v\rangle^{\frac\gamma2}f\|^2_{H^{3}_{x}L^{2}_{v}}.
    \end{align}

\bigskip
\subsection*{Idea of proof for main Theorem \ref{main result}} 
As in the case of hard potentials, we employ a family of auxiliary vector fields $H_{\delta}$, which were first introduced in~\cite{CLX}:
\begin{equation*}\label{vecM}
   H_\delta = \frac{1}{\delta+1} t^{\delta+1} \partial_{x_1} - t^{\delta} A_{+, 1},
\end{equation*}
where $\delta > 2\max\left\{1, \frac{b - \gamma}{2b} \right\}$ and $-3 < \gamma < 0$. Specifically, we have
     $[H_\delta,\, \partial_t + v \cdot \nabla_x] = \delta t^{\delta - 1} A_{+, 1}$.
More generally, by induction on $k$, we can obtain that  
\begin{align}\label{kehigher}
\forall\, k \geq 1,\quad [H_\delta^{k},\, \partial_t + v \cdot \nabla_x] = \delta k t^{\delta - 1} A_{+, 1} H_\delta^{k - 1}.
\end{align}

Let $\lambda > 2 \max\left\{1, \frac{b - \gamma}{2b} \right\}$, and define
\begin{equation}\label{delta12}
\delta_1 = \lambda, \quad \delta_2 = \left(1 - \frac{b}{b - \gamma}\right)\lambda + \frac{2b}{b - \gamma} \max\left\{1, \frac{b - \gamma}{2b} \right\}.
\end{equation}
It follows that $\delta_1 > \delta_2 > 2 \max\left\{1, \frac{b - \gamma}{2b} \right\}$. With these parameters, we define
\[
H_{\delta_1} = \frac{1}{\delta_1 + 1} t^{\delta_1 + 1} \partial_{x_1} - t^{\delta_1} A_{+, 1}, \quad
H_{\delta_2} = \frac{1}{\delta_2 + 1} t^{\delta_2 + 1} \partial_{x_1} - t^{\delta_2} A_{+, 1}.
\]
Then $[H_{\delta_1}, H_{\delta_2}] = 0$, and both $\partial_{x_1}$ and $A_{+, 1}$ can be expressed as linear combinations of $H_{\delta_1}$ and $H_{\delta_2}$:
\begin{equation}\label{linear combination-H}
\left\{
\begin{aligned}
t^{\lambda+1} \partial_{x_1} &= \frac{(\delta_2 + 1)(\delta_1 + 1)}{\delta_2 - \delta_1} H_{\delta_1} 
- \frac{(\delta_2 + 1)(\delta_1 + 1)}{\delta_2 - \delta_1} t^{\delta_1 - \delta_2} H_{\delta_2} 
:= \mathcal{T}_1 + \mathcal{T}_2, \\
t^{\lambda} A_{+, 1} &= \frac{\delta_1 + 1}{\delta_2 - \delta_1} H_{\delta_1}
- \frac{\delta_2 + 1}{\delta_2 - \delta_1} t^{\delta_1 - \delta_2} H_{\delta_2} 
:= \mathcal{T}_3 + \mathcal{T}_4.
\end{aligned}
\right.
\end{equation}
This decomposition allows us to control the classical directional derivatives along $H_{\delta_1}$ and $H_{\delta_2}$.

For $m+n\ge1$, by using \eqref{kehigher}, we have
\begin{align}\label{commutator-mn}
\begin{split}
     &\left([H^{m}_{\delta_{1}}H^{n}_{\delta_{2}}, \partial_t+v\ \cdot\ \partial_x], \ H^{m}_{\delta_{1}}H^{m}_{\delta_{2}}f\right)_{H^{3}_{x}L^{2}_{v}}\\
     &=\delta_{1}mt^{\delta_{1}-1}\left(A_{+, 1}H^{m-1}_{\delta_{1}}H^{n}_{\delta_{2}}, \ H^{m}_{\delta_{1}}H^{m}_{\delta_{2}}f\right)_{H^{3}_{x}L^{2}_{v}}+\delta_{2}nt^{\delta_{2}-1}\left(A_{+, 1}H^{m}_{\delta_{1}}H^{n-1}_{\delta_{2}}, \ H^{m}_{\delta_{1}}H^{m}_{\delta_{2}}f\right)_{H^{3}_{x}L^{2}_{v}}.
\end{split}
\end{align}
Since $\gamma<0$, the index in \eqref{upper bound1} and interpolation 
$$\|g\|^{2}_{L^{2}}\le\|\langle\cdot\rangle^{\frac\gamma2}g\|_{L^{2}}^{\gamma+2}\|\langle\cdot\rangle^{\frac\gamma2+1}g\|_{L^{2}}^{-\gamma},$$
implies that for the stronger case $-3\le\gamma\le-2$ cannot be estimated without a weighted function. So we introduce a weight $\omega_{t}(v) = e^{\frac{c_{0}}{1 + t} \langle v \rangle^{b}}$ with $t \ge 0,\ v \in \mathbb{R}^{3}$ and  $0<b\le2$. Then
\begin{align}\label{kehigher-1}
\begin{split}
\forall\ k\geq 1,\quad 	[\omega_{t} H_\delta ^k, \,\,  \partial_t+v\,\cdot\,\partial_x ]&=\omega_{t}[H_\delta^{k}, \,\, \partial_t+v\,\cdot\,\partial_x ]-\partial_t\omega_{t} H^{k}_\delta\\
&=\delta kt^{\delta-1} \omega_{t} A_{+, 1} H_\delta^{k-1}-\partial_t\omega_{t} H^{k}_\delta,
\end{split}
\end{align}

From \eqref{linear combination-H}, since $[\mathcal T_{j}, \mathcal T_{k}]=0$ for any $j, k$, we have that for all $\alpha_{1}, m\in\mathbb N$
\begin{align*}
	&t^{(\lambda+1)\alpha_{1}+ \lambda m} \|\omega_{t} {\partial_{x_{1}}^{\alpha_{1}} A_{+, 1}^{m}  f(t)}\|_{H^3_{x}L^2_v} =\|\omega_{t} (\mathcal T_{1}+\mathcal T_{2})^{\alpha_{1}}(\mathcal T_{3}+\mathcal T_{4})^{m}  f(t)\|_{H^3_{x}L^2_v}\\
	&\le\left|\frac{(\delta_2+ 1)(\delta_1+1)}{\delta_2-\delta_1}\right|^{\alpha_{1}+m}\sum_{j=0}^{\alpha_{1}}\sum_{k=0}^{m}\binom{\alpha_{1}}{j}\binom{m}{k}t^{(\delta_{1}-\delta_{2})(\alpha_{1}+m-j-k)}\|\omega_{t} H^{j+k}_{\delta_{1}}H^{\alpha_{1}+m-j-k}_{\delta_{2}}f(t)\|_{H^{3}_{x}L^{2}_{v}}.
\end{align*}
The above inequality together with Proposition 5.2 of~\cite{LX-5} and Theorem 2.1 of~\cite{GPR} can be used to obtain \eqref{Gelfand-Shilov}. So that to finish the proof of Theorem \ref{main result}, it suffice to show that there exists a constant $A > 0$ such that for any $0 < t \le T$ and any $m, n \in\mathbb N$,
\begin{align*}
     \|\omega_{t} H^{m}_{\delta_{1}}H^{n}_{\delta_{2}}f(t)\|_{H^{3}_{x}L^{2}_{v}}\le A^{m+n-\frac12}\left((m-2)!(n-2)!\right)^{\sigma}.
\end{align*}

Next, we review the commutator between the nonlinear Landau operator and weight $\omega_{t}$, which has been addressed in~\cite{C-X-X}.
\begin{lemma}~\cite{C-X-X}\label{commutator-nonlinear}
     Let $-3<\gamma<0$, then there exists a constant $C_{3}>0$,  which depends on $\gamma, \ b$ and $c_{0}$, such that for any suitable functions $f, g$ and $h$, 
\begin{align*}     
     \left|\left(\omega_{t}\Gamma(f, g), \omega_{t} h\right)_{L^{2}_{v}}\right|\le C_{3}\left\|f\right\|_{2, \frac\gamma2}\|\omega_{t} g\|_{\sigma}\|\omega_{t} h\|_{\sigma}.
\end{align*}     
\end{lemma}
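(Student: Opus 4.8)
The plan is to commute the weight $\omega_{t}$ into the Landau operator, peel off a ``main'' term controlled by the unweighted trilinear estimate, and treat the resulting commutator terms by hand. Writing $\bar a^{f}_{jk}=a_{jk}*(\sqrt\mu f)$, $\bar a^{A_{+}f}_{j}=\sum_{k}a_{jk}*(\sqrt\mu A_{+,k}f)$, and using Lemma \ref{representations} together with the elementary identities $[\omega_{t},A_{+,j}]\Phi=(\partial_{v_{j}}\omega_{t})\Phi$ and $\omega_{t}A_{+,k}g=A_{+,k}(\omega_{t}g)+(\partial_{v_{k}}\omega_{t})g$ --- both immediate from $A_{+,j}=\tfrac12 v_{j}-\partial_{v_{j}}$ and the fact that $\omega_{t}$ and $\bar a^{f}_{jk}$ act by multiplication --- I would first derive, with sums over repeated indices,
\begin{align*}
\omega_{t}\Gamma(f,g)=\Gamma(f,\omega_{t}g)+A_{+,j}\big(\bar a^{f}_{jk}(\partial_{v_{k}}\omega_{t})\,g\big)+(\partial_{v_{j}}\omega_{t})\,\bar a^{f}_{jk}A_{+,k}g-(\partial_{v_{j}}\omega_{t})\,\bar a^{A_{+}f}_{j}g.
\end{align*}
Pairing with $\omega_{t}h$ in $L^{2}_{v}$, the first term is bounded by the unweighted version of the estimate, $\big|(\Gamma(f,\tilde g),\tilde h)_{L^{2}_{v}}\big|\lesssim\|f\|_{2,\frac\gamma2}\|\tilde g\|_{\sigma}\|\tilde h\|_{\sigma}$ with $\tilde g=\omega_{t}g$, $\tilde h=\omega_{t}h$; this unweighted bound is of the standard type and follows, as in \cite{G-1,XX,LX3}, from Lemma \ref{representations}, the pointwise estimates on $\bar a^{f}_{jk}$ described below, and the projected lower bound \eqref{upper bound1}.

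For the three remainders I would use $\partial_{v_{j}}\omega_{t}=\tfrac{c_{0}b}{1+t}\langle v\rangle^{b-2}v_{j}\,\omega_{t}$, so that $|\partial_{v_{j}}\omega_{t}|\le c_{0}b\,\langle v\rangle^{b-1}\omega_{t}$ uniformly in $t\ge0$ and $\omega_{t}\ge1$ --- this is the only place $c_{0},b$ enter $C_{3}$. Integrating by parts in $v$ (the $L^{2}_{v}$-adjoint of $A_{+,j}$ being $A_{-,j}$), inserting $g=\omega_{t}^{-1}(\omega_{t}g)$ and pulling $\omega_{t}^{-1}$ through $A_{+,k}$, each remainder becomes an $L^{2}_{v}$-pairing of $\omega_{t}g$ or $A_{+,k}(\omega_{t}g)$ against $A_{-,j}(\omega_{t}h)$ or $\omega_{t}h$, times a coefficient obtained by contracting $\bar a^{f}_{jk}$ or $\bar a^{A_{+}f}_{j}$ with the $v$-factors produced by $\partial\omega_{t}$. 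These coefficients satisfy the standard pointwise bounds $|\bar a^{f}_{jk}(v)|+|\bar a^{A_{+}f}_{j}(v)|\lesssim\langle v\rangle^{\gamma+2}\|f\|_{2,\frac\gamma2}$ uniformly over $-3<\gamma<0$ (for $\bar a^{A_{+}f}_{j}$ one first integrates by parts in $v_{*}$, using $\sum_{k}\partial_{v_{k}}a_{jk}(v)=-2v_{j}|v|^{\gamma}$, so that $\bar a^{A_{+}f}_{j}=2\,(v_{j}|v|^{\gamma})*(\sqrt\mu f)$, which removes the derivative from $f$). The key structural point is that $a_{jk}(w)w_{k}=0$ makes the contracted vectors $v_{k}\bar a^{f}_{jk}$ and $v_{j}\bar a^{f}_{jk}$ essentially orthogonal to $v$, while their components along $v$ --- equivalently the scalars $v_{j}v_{k}\bar a^{f}_{jk}/|v|$ and the like --- are one power smaller, $\lesssim\langle v\rangle^{\gamma+1}\|f\|_{2,\frac\gamma2}$.

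This anisotropy is exactly what makes the weights fit. By \eqref{upper bound1} the norm $\|\omega_{t}\cdot\|_{\sigma}$ controls $\|\langle v\rangle^{\frac{\gamma+2}{2}}(\mathbf I-\mathbf P_{v})\nabla_{\mathcal H_{\pm}}(\omega_{t}\cdot)\|_{L^{2}_{v}}$ and $\|\langle v\rangle^{\frac\gamma2}\nabla_{\mathcal H_{\pm}}(\omega_{t}\cdot)\|_{L^{2}_{v}}$, and by \eqref{upper bound} it controls $\|\langle v\rangle^{\frac{\gamma+2}{2}}\omega_{t}\cdot\|_{L^{2}_{v}}$; so in each remainder I pair the $\langle v\rangle^{\gamma+2}$-sized (orthogonal-to-$v$) part of the coefficient against an $(\mathbf I-\mathbf P_{v})$-gradient and its $\langle v\rangle^{\gamma+1}$-sized parallel part against a full gradient, dumping the leftover $\langle v\rangle$-weight onto the undifferentiated factor $\omega_{t}g$ or $\omega_{t}h$. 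After Cauchy--Schwarz that leftover weight is at most $\langle v\rangle^{\frac\gamma2+b-1}$, which since $\tfrac\gamma2+b-1\le\tfrac{\gamma+2}{2}$ exactly when $b\le2$ (and $\langle v\rangle\ge1$, $\gamma+2>0$) is absorbed into $\langle v\rangle^{\frac{\gamma+2}{2}}$; the two remainders in which both $g$ and $h$ are undifferentiated carry the scalar coefficient $v_{j}v_{k}\bar a^{f}_{jk}$ or $v_{j}\bar a^{A_{+}f}_{j}$ of size $\langle v\rangle^{\gamma+2}\|f\|_{2,\frac\gamma2}$ and close after a symmetric split with weight $\langle v\rangle^{\frac{\gamma+b}{2}}\le\langle v\rangle^{\frac{\gamma+2}{2}}$ on each of $\omega_{t}g,\omega_{t}h$. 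Each remainder is then $\lesssim\|f\|_{2,\frac\gamma2}\|\omega_{t}g\|_{\sigma}\|\omega_{t}h\|_{\sigma}$, and combined with the main term this gives the lemma with $C_{3}=C_{3}(\gamma,b,c_{0})$.

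The hardest part will be precisely this weight bookkeeping: the budget is so tight that it only closes because $0<b\le2$, which forces one to exploit both the anisotropy of $\bar a^{f}_{jk}$ (the cancellation $a_{jk}(w)w_{k}=0$ and the projection $\mathbf P_{v}$ in \eqref{upper bound1}) and the $v_{*}$-integration by parts for $\bar a^{A_{+}f}_{j}$. The other delicate ingredient is proving the pointwise convolution estimates $|a_{jk}*(\langle\cdot\rangle^{m}\sqrt\mu f)(v)|\lesssim\langle v\rangle^{\gamma+2}\|f\|_{2,\frac\gamma2}$ and $|(v_{j}|v|^{\gamma})*(\sqrt\mu f)(v)|\lesssim\langle v\rangle^{\gamma+1}\|f\|_{2,\frac\gamma2}$ uniformly in $\gamma\in(-3,0)$: one splits off the near-diagonal region, where $|w|^{\gamma}$ and $|w|^{\gamma+2}$ are integrable because $\gamma>-3$, and uses the Gaussian decay of $\sqrt\mu$ to absorb polynomial weights and to control the region of large $|v_{*}|$.
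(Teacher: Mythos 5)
The paper itself gives no proof of this lemma (it is quoted from \cite{C-X-X}), so your proposal can only be judged on its own terms. Its skeleton is the natural one and is surely close to the cited proof: commute $\omega_t$ through the representation of Lemma \ref{representations} (your identity for $\omega_t\Gamma(f,g)$ is correct), move the derivative off $f$ by integration by parts in $v_*$ using $\sum_k\partial_ka_{jk}(w)=-2w_j|w|^\gamma$, exploit the cancellation $a_{jk}(w)w_k=0$ together with the projected coercivity \eqref{upper bound1}, and use $0<b\le2$ so that the extra weight $\langle v\rangle^{b-1}$ produced by $\partial_v\omega_t$ fits the budget; your bookkeeping of the parallel/orthogonal components does close.

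The genuine gap is the pointwise convolution bound on which everything rests: $\bigl|(v_j|v|^\gamma)*(\sqrt\mu f)(v)\bigr|\lesssim\langle v\rangle^{\gamma+1}\|f\|_{2,\frac\gamma2}$ ``uniformly over $-3<\gamma<0$'' is false for $-3<\gamma\le-\tfrac52$. With only $f\in L^2_{\gamma/2}$, Cauchy--Schwarz requires $|w|^{2(\gamma+1)}$ to be locally integrable, i.e.\ $\gamma>-\tfrac52$; for $\gamma<-\tfrac52$ a concentration example $f=\epsilon^{-3/2}\chi((\cdot-v_0)/\epsilon)$ makes the convolution at $v_0$ blow up like $\epsilon^{\gamma+5/2}$. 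This kernel $|w|^{\gamma+1}$ is exactly what appears in the term $-A_{+,j}\bigl(\bar a^{A_+f}_jg\bigr)$, both in your ``main term'' $(\Gamma(f,\omega_tg),\omega_th)$ --- which you defer to \cite{G-1,XX,LX3}, although \cite{XX} concerns the linear operator and \cite{LX3} only moderately soft potentials, so the deferred estimate is of exactly the same difficulty as the lemma --- and in your third remainder. For the very soft range one cannot bound the convolved coefficient pointwise; the trilinear form must be estimated directly, e.g.\ by splitting $|v-v_*|\lessgtr1$ and, on the singular region, putting $|w|^{\gamma+1}\mathbf 1_{|w|\le1}$ in $L^p$ with $p<3/(-\gamma-1)$ and upgrading the integrability of $\omega_tg$ or $\omega_th$ through the gradient part of the $\sigma$-norm (Sobolev embedding $H^1_{loc}\hookrightarrow L^6_{loc}$), as in Guo's original argument for $\gamma$ down to $-3$. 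Without this ingredient your proof covers only $\gamma>-\tfrac52$, not the full range $-3<\gamma<0$ claimed in the lemma.
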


Since $H^{3}_{x}$ is an algebra, which can be proved by using the Fourier transformation of $x$ variable, then we can extend the trilinear estimate into $H^{3}_{x}L^{2}_{v}$.
\begin{lemma}\label{commutator-nonlinear-x}
     Let $-3<\gamma<0$, then there exists a constant $C_{4}>0$,  which depends on $\gamma, \ b$ and $c_{0}$, such that for any suitable functions $f, g$ and $h$, 
\begin{align*}     
     \left|\left(\omega_{t}\Gamma(f, g), \omega_{t} h\right)_{H^{3}_{x}L^{2}_{v}}\right|\le C_{4}\left\|f\right\|_{H^{3}_{x}L^{2}_{v}}|||\omega_{t} g|||\cdot|||\omega_{t} h|||.
\end{align*}     
\end{lemma}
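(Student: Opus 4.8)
\textbf{Proof proposal for Lemma~\ref{commutator-nonlinear-x}.}

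The plan is to reduce the $H^3_x L^2_v$ estimate to the velocity-only trilinear estimate of Lemma~\ref{commutator-nonlinear} by exploiting that $\omega_t$ acts only on the $v$ variable and commutes with all $x$-derivatives, and that $H^3(\mathbb{T}^3_x)$ is a Banach algebra. First I would write $\|\cdot\|_{H^3_x L^2_v}^2 = \sum_{|\alpha|\le 3}\|\partial_x^\alpha(\cdot)\|_{L^2_{x,v}}^2$ and, since $\Gamma$ acts only in $v$ while $\omega_t$ is $x$-independent, expand $\partial_x^\alpha\big(\omega_t \Gamma(f,g)\big) = \sum_{\alpha'\le\alpha}\binom{\alpha}{\alpha'}\omega_t\Gamma(\partial_x^{\alpha'}f,\partial_x^{\alpha-\alpha'}g)$ by the bilinearity of $\Gamma$ in the $x$-parameter (it is bilinear in $v$, hence its coefficients are pointwise in $x$, so Leibniz applies). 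This turns $\big(\omega_t\Gamma(f,g),\omega_t h\big)_{H^3_xL^2_v}$ into a finite sum of terms $\int_{\mathbb{T}^3_x}\big(\omega_t\Gamma(\partial_x^{\alpha'}f,\partial_x^{\alpha''}g),\omega_t\partial_x^\alpha h\big)_{L^2_v}\,dx$ with $|\alpha'|+|\alpha''|\le|\alpha|\le 3$.

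Next I would apply Lemma~\ref{commutator-nonlinear} pointwise in $x$ to each summand, bounding it by $C_3\int_{\mathbb{T}^3_x}\|\partial_x^{\alpha'}f(x,\cdot)\|_{2,\frac\gamma2}\,\|\omega_t\partial_x^{\alpha''}g(x,\cdot)\|_\sigma\,\|\omega_t\partial_x^\alpha h(x,\cdot)\|_\sigma\,dx$. Then I would use Hölder in $x$ — placing $L^\infty_x$ on the factor carrying the fewest derivatives and $L^2_x$ on the other two — together with the Sobolev embedding $H^3(\mathbb{T}^3_x)\hookrightarrow L^\infty(\mathbb{T}^3_x)$ (and more generally the algebra / product estimates for $H^3_x$, i.e. $\|uv\|_{H^k_x}\lesssim\|u\|_{H^k_x}\|v\|_{H^k_x}$ for $k\le 3$ using Fourier on $\mathbb{T}^3$, as the paper notes) to absorb the lost $x$-derivatives. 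Summing the finitely many terms over $\alpha',\alpha'',\alpha$ and recognizing $\sum_{|\alpha'|\le3}\|\partial_x^{\alpha'}f\|_{L^2_x(L^2_{v,\frac\gamma2})}^2 \sim \|f\|_{H^3_xL^2_v}^2$ (note $\|\cdot\|_{2,\frac\gamma2}\le\|\cdot\|_{L^2_v}$ since $\gamma<0$) and $\sum_{|\alpha''|\le3}\|\omega_t\partial_x^{\alpha''}g\|_{L^2_x\|\cdot\|_\sigma}^2 = |||\omega_t g|||^2$ yields the claimed bound $C_4\|f\|_{H^3_xL^2_v}|||\omega_t g|||\cdot|||\omega_t h|||$.

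The main obstacle — though a mild one — is the bookkeeping of which factor to place in $L^\infty_x$ in each of the several Leibniz terms so that the number of $x$-derivatives never exceeds $3$ on any single factor; since $|\alpha|\le 3$ and the derivatives are split among $f$ and $g$, at worst one factor carries all three derivatives and the other two carry none, in which case those two go into $L^\infty_x\subset H^3_x$ and the first stays in $L^2_x$, so the count always closes. A secondary point to be careful about is that Lemma~\ref{commutator-nonlinear} requires measurability and finiteness of the $x$-integrand, which follows from $f,g,h$ being suitable (Schwartz in $v$, smooth in $x$) so that Fubini and the pointwise-in-$x$ application of the velocity estimate are legitimate; the constant $C_4$ then depends only on $C_3$, the Sobolev embedding constant for $\mathbb{T}^3$, and the fixed binomial factors, hence only on $\gamma,b,c_0$ as asserted.
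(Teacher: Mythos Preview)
Your proposal is correct and follows exactly the route the paper indicates: the paper's entire proof is the sentence ``Since $H^{3}_{x}$ is an algebra, which can be proved by using the Fourier transformation of $x$ variable, then we can extend the trilinear estimate into $H^{3}_{x}L^{2}_{v}$,'' and your Leibniz expansion in $x$ followed by the pointwise-in-$x$ application of Lemma~\ref{commutator-nonlinear} and H\"older/Sobolev in $x$ is precisely the standard unpacking of that remark. One small slip: in your last paragraph you write ``those two go into $L^\infty_x$,'' but of course only the single factor with fewest $x$-derivatives goes into $L^\infty_x$ (as you stated correctly a few lines earlier), with the remaining two factors---including $\partial_x^\alpha h$, which always carries the full $|\alpha|$ derivatives---placed in $L^2_x$.
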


\section{Commutators between weights and Landau operators with vector fields}\label{s3}

This section is devoted to constructing some commutator estimates of the Landau operator, which will be used to prove our main result. 
We first review the following Leibniz-type formula.
\begin{lemma}~\cite{X}\label{Leibniz-type formula}
For all suitable functions $F$ and $G$ we have
$$H^{m}_{\delta}\left(a_{jk}*(\sqrt\mu F)G\right)=\sum_{l=0}^{m}\binom{m}{l}\left(a_{jk}*(\sqrt\mu H_{\delta}^{l}F)H_{\delta}^{m-l}G\right), \quad \forall \ m\ge1.$$
\end{lemma}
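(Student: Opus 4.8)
The plan is to establish the identity first for $m=1$ and then to propagate it by induction on $m$, with the binomial coefficients arising from Pascal's rule. The conceptual point I would emphasize is that although $H_\delta=\frac{1}{\delta+1}t^{\delta+1}\partial_{x_1}-t^{\delta}A_{+,1}$ is \emph{not} a derivation---the operator $A_{+,1}=\frac12 v_1-\partial_{v_1}$ contains the multiplication by $\frac12 v_1$---it \emph{does} act as one on products of the special form $\big(a_{jk}*(\sqrt\mu F)\big)G$, and this is precisely what the Gaussian weight $\sqrt\mu$ inside the convolution buys us. Here $j,k$ are fixed throughout and $F,G$ are taken suitable (smooth in $x$, Schwartz in $v$), so that all integrations by parts below are justified and all the operators involved preserve this class.

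First I would record two elementary commutation identities. Since $\partial_{x_1}$ passes through the $v$-convolution, $\partial_{x_1}\big(a_{jk}*(\sqrt\mu F)\big)=a_{jk}*(\sqrt\mu\,\partial_{x_1}F)$. For the velocity derivative I would write $(\partial_1 a_{jk})(v-v_*)=-\partial_{v_{*,1}}\big(a_{jk}(v-v_*)\big)$, integrate by parts in $v_*$ (no boundary terms, by the Gaussian decay of $\sqrt\mu$), and use $\partial_{v_1}\sqrt\mu=-\frac{v_1}{2}\sqrt\mu$ so that $\partial_{v_{*,1}}(\sqrt\mu F)=\sqrt\mu\big(\partial_{v_1}F-\frac12 v_1 F\big)=-\sqrt\mu\,A_{+,1}F$; this yields
\begin{equation*}
\partial_{v_1}\big(a_{jk}*(\sqrt\mu F)\big)=-\,a_{jk}*(\sqrt\mu\,A_{+,1}F).
\end{equation*}

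Next I would handle $m=1$. Writing $B=a_{jk}*(\sqrt\mu F)$ and expanding $H_\delta(BG)=\frac{1}{\delta+1}t^{\delta+1}\partial_{x_1}(BG)-t^{\delta}A_{+,1}(BG)$ by the ordinary Leibniz rule for $\partial_{x_1}$ and $\partial_{v_1}$, the terms in which the derivative lands on $B$ reassemble, via the two identities above, into $\big(a_{jk}*(\sqrt\mu H_\delta F)\big)G$; the remaining terms are $B\big(\frac{1}{\delta+1}t^{\delta+1}\partial_{x_1}G+t^{\delta}\partial_{v_1}G\big)$ together with the spurious contribution $-\frac12 t^{\delta}v_1 BG$ produced by the $\frac12 v_1$ part of $A_{+,1}$, and these combine to exactly $B\,H_\delta G$. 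Thus $H_\delta\big(a_{jk}*(\sqrt\mu F)\,G\big)=a_{jk}*(\sqrt\mu H_\delta F)\,G+a_{jk}*(\sqrt\mu F)\,H_\delta G$, which is the case $m=1$.

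Finally, assuming the formula for $m-1$, I would apply $H_\delta$ to both sides; each summand $a_{jk}*(\sqrt\mu H_\delta^{l}F)\,H_\delta^{m-1-l}G$ is again of the special product form, so the $m=1$ case applies and produces $a_{jk}*(\sqrt\mu H_\delta^{l+1}F)\,H_\delta^{m-1-l}G+a_{jk}*(\sqrt\mu H_\delta^{l}F)\,H_\delta^{m-l}G$. Re-indexing and collecting coefficients via $\binom{m-1}{l}+\binom{m-1}{l-1}=\binom{m}{l}$ then gives the asserted identity for $m$. I do not expect a genuine obstacle here: the one step that needs care is the integration by parts giving the second commutation identity---the observation that $\sqrt\mu$ is exactly what turns $\partial_{v_1}$ into $-A_{+,1}$ and hence restores the derivation property---after which everything is routine bookkeeping with binomial coefficients.
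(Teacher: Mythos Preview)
Your proposal is correct. The paper does not reproduce a proof of this lemma---it is quoted from \cite{X}---but your argument is exactly the natural one: the key observation $\partial_{v_1}\big(a_{jk}*(\sqrt\mu F)\big)=-\,a_{jk}*(\sqrt\mu\,A_{+,1}F)$, obtained by integration by parts against the Gaussian, restores the derivation property of $H_\delta$ on these special products, after which the $m=1$ case and the induction via Pascal's rule are routine.
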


From Lemma \ref{commutator-nonlinear-x} and above the Leibniz-type formula, we can immediately obtain the following estimate of the nonlinear Landau operator.
\begin{prop}\label{H-m-Gamma}
For any $m, n\in\mathbb N$, let $-3<\gamma<0$, then for all $\delta_{1}, \delta_{2}>2\max\{1, \frac{b-\gamma}{2b}\}$ and any suitable functions $f, g, h$, we have
\begin{align*}
     &\left|\left(\omega_{t} H^{m}_{\delta_{1}}H^{n}_{\delta_{2}}\Gamma(f, g),\omega_{t} H^{m}_{\delta_{1}}H^{n}_{\delta_{2}}h\right)_{H^{3}_{x}L^{2}_{v}}\right|\\
     &\le C_{4}\sum_{l=0}^{m}\sum_{p=0}^{n}\binom{m}{l}\binom{n}{p}\|H^{l}_{\delta_{1}}H^{p}_{\delta_{2}}f\|_{H^{3}_{x}L^{2}_{v}}|||\omega_{t} H^{m-l}_{\delta_{1}}H^{n-p}_{\delta_{2}}g|||\cdot|||\omega_{t} H^{m}_{\delta_{1}}H^{n}_{\delta_{2}}h|||.
\end{align*}
\end{prop}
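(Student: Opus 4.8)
The plan is to apply the operator $H^m_{\delta_1}H^n_{\delta_2}$ to $\Gamma(f,g)$ using the representation of $\Gamma$ from Lemma \ref{representations}, distribute it via the Leibniz-type formula of Lemma \ref{Leibniz-type formula}, and then invoke the trilinear estimate of Lemma \ref{commutator-nonlinear-x} on each resulting term. First I would recall that $\Gamma(f,g)=A_{+,j}\big((a_{jk}*(\sqrt\mu f))A_{+,k}g\big)-A_{+,j}\big((a_{jk}*(\sqrt\mu A_{+,k}f))g\big)$, so that $\Gamma(f,g)$ is itself of the form $A_{+,j}$ applied to a product $a_{jk}*(\sqrt\mu F)\cdot G$ (with two choices of $F,G$). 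The key algebraic point is that $H_\delta$ commutes with $A_{+,j}$ for $j\ne 1$, and for $j=1$ one has $A_{+,1}=A_{+,1}$ appearing in $H_\delta$ itself; in any case, since $H_\delta$ is built from $\partial_{x_1}$ and $A_{+,1}$, and the collision kernel acts only in $v$ while the $x$-derivative passes through the convolution, the operator $H_\delta$ commutes with $A_{+,j}$ acting from the outside. Thus $H^m_{\delta_1}H^n_{\delta_2}\Gamma(f,g)=A_{+,j}\big(H^m_{\delta_1}H^n_{\delta_2}(a_{jk}*(\sqrt\mu F)\,G)\big)$ for each of the two summands.

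Next I would apply Lemma \ref{Leibniz-type formula} twice — once for $H^m_{\delta_1}$ and once for $H^n_{\delta_2}$, using that $[H_{\delta_1},H_{\delta_2}]=0$ so the two expansions are compatible — to write
\[
H^m_{\delta_1}H^n_{\delta_2}\big(a_{jk}*(\sqrt\mu F)\,G\big)=\sum_{l=0}^m\sum_{p=0}^n\binom{m}{l}\binom{n}{p}\,a_{jk}*\big(\sqrt\mu\,H^l_{\delta_1}H^p_{\delta_2}F\big)\,H^{m-l}_{\delta_1}H^{n-p}_{\delta_2}G.
\]
Pairing $\omega_t H^m_{\delta_1}H^n_{\delta_2}\Gamma(f,g)$ against $\omega_t H^m_{\delta_1}H^n_{\delta_2}h$ in $H^3_xL^2_v$, moving the outer $A_{+,j}$ back inside recovers the structure $\Gamma(\,\cdot\,,\,\cdot\,)$ with $f$ replaced by $H^l_{\delta_1}H^p_{\delta_2}f$ and $g$ by $H^{m-l}_{\delta_1}H^{n-p}_{\delta_2}g$; here one uses that the two summands in $\Gamma$ reassemble correctly because the Leibniz formula treats the $A_{+,k}f$ factor and the $f$ factor the same way (both under the convolution with $\sqrt\mu$). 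Then Lemma \ref{commutator-nonlinear-x} bounds each term by
\[
C_4\,\|H^l_{\delta_1}H^p_{\delta_2}f\|_{H^3_xL^2_v}\;|||\omega_t H^{m-l}_{\delta_1}H^{n-p}_{\delta_2}g|||\;\cdot\;|||\omega_t H^m_{\delta_1}H^n_{\delta_2}h|||,
\]
and summing over $l,p$ with the binomial coefficients gives exactly the claimed inequality.

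The main obstacle I anticipate is the bookkeeping needed to justify that the outer creation operators $A_{+,j}$ can be commuted past $H^m_{\delta_1}H^n_{\delta_2}$ and then reabsorbed so that the summands genuinely reassemble into $\Gamma(H^l_{\delta_1}H^p_{\delta_2}f,\,H^{m-l}_{\delta_1}H^{n-p}_{\delta_2}g)$ rather than into some larger collection of terms — in particular one must check that no extra commutator terms between $A_{+,1}$ and $H_{\delta_i}$ survive, which follows because $H_{\delta_i}$ is a combination of $\partial_{x_1}$ (commuting with everything in $v$) and $A_{+,1}$ (commuting with itself). A secondary technical point is that Lemma \ref{Leibniz-type formula} is stated for a single power $H^m_\delta$, so one should either cite it for each factor separately using $[H_{\delta_1},H_{\delta_2}]=0$, or note that the same induction that proves it extends verbatim to the mixed operator $H^m_{\delta_1}H^n_{\delta_2}$. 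Once these commutation facts are in place, the estimate is an immediate consequence of Lemma \ref{commutator-nonlinear-x} and the Leibniz expansion, with no further analysis required.
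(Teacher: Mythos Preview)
Your proposal is correct and follows essentially the same approach as the paper, which simply states that the proposition follows immediately from the Leibniz-type formula (Lemma \ref{Leibniz-type formula}) together with the trilinear estimate (Lemma \ref{commutator-nonlinear-x}). You have in fact spelled out the commutation details (in particular $[H_{\delta_i},A_{+,j}]=0$ and $[H_{\delta_1},H_{\delta_2}]=0$) more carefully than the paper does, and your anticipated ``obstacles'' are exactly the routine verifications that make the one-line proof in the paper rigorous.
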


Now, we point out an estimate of the linear Landau operator $\mathcal L_{2}$; we begin with a singular integral in~\cite{L-1}. For any $s>-3$ and $\delta>0$, we have 
     \begin{equation}\label{singular integration}
          \int_{\mathbb R^{3}}|v-w|^{s}e^{-\delta|w|^{2}}dw\le C_{\delta, s}\langle v\rangle^{s}.
     \end{equation}

\begin{cor}\label{L2}
     For any $m, n\in\mathbb N_{+}$, we have for all $\delta_{1}, \delta_{2}>2\max\{1, \frac{b-\gamma}{2b}\}$ and any suitable function $f$
\begin{align*}     
     &\left|\left(\omega_{t} H^{m}_{\delta_{1}}H^{n}_{\delta_{2}}\mathcal L_{2}f, \omega_{t} H^{m}_{\delta_{1}}H^{n}_{\delta_{2}}f\right)_{H^{3}_{x}L^{2}_{v}}\right|\le C_{5}\sum_{l=0}^{m}\sum_{p=0}^{n}\binom{m}{l}\binom{n}{p}\left(t^{\delta_{1}}\sqrt{C_{0}}\right)^{m-l}\left(t^{\delta_{2}}\sqrt{C_{0}}\right)^{n-p}\\
     &\qquad\qquad\qquad\times\sqrt{(m-l+n-p+3)!}\|H^{l}_{\delta_{1}}H^{p}_{\delta_{2}}f\|_{H^{3}_{x}L^{2}_{v}}|||\omega_{t} H^{m}_{\delta_{1}}H^{n}_{\delta_{2}}f|||.
\end{align*} 
with the constants $C_{0}, C_{5}>0$ are independent of $m$ and $n$, but depends on $\gamma, \ b$ and $c_{0}$.
\end{cor}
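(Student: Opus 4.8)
\emph{Reduction via Proposition~\ref{H-m-Gamma}.} The key observation is that $\mathcal L_{2}f=-\Gamma(f,\sqrt\mu)$, so the estimate is a special case of the already-proved trilinear bound of Proposition~\ref{H-m-Gamma} with the second argument frozen to the Schwartz function $\sqrt\mu$. Indeed, taking $g=\sqrt\mu$ and $h=f$ in Proposition~\ref{H-m-Gamma} gives, for $\delta_{1},\delta_{2}>2\max\{1,\frac{b-\gamma}{2b}\}$,
\[
\left|\left(\omega_{t}H^{m}_{\delta_{1}}H^{n}_{\delta_{2}}\mathcal L_{2}f,\ \omega_{t}H^{m}_{\delta_{1}}H^{n}_{\delta_{2}}f\right)_{H^{3}_{x}L^{2}_{v}}\right|\le C_{4}\sum_{l=0}^{m}\sum_{p=0}^{n}\binom{m}{l}\binom{n}{p}\|H^{l}_{\delta_{1}}H^{p}_{\delta_{2}}f\|_{H^{3}_{x}L^{2}_{v}}\,|||\omega_{t}H^{m-l}_{\delta_{1}}H^{n-p}_{\delta_{2}}\sqrt\mu|||\,|||\omega_{t}H^{m}_{\delta_{1}}H^{n}_{\delta_{2}}f|||.
\]
Comparing with the assertion, it remains to prove that, with $N:=m-l+n-p$,
\[
|||\omega_{t}H^{m-l}_{\delta_{1}}H^{n-p}_{\delta_{2}}\sqrt\mu|||\ \lesssim\ \left(t^{\delta_{1}}\sqrt{C_{0}}\right)^{m-l}\left(t^{\delta_{2}}\sqrt{C_{0}}\right)^{n-p}\sqrt{(N+3)!}
\]
for some constant $C_{0}$ depending only on $\gamma,b,c_{0}$.

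\emph{Computing the vector-field derivatives of $\sqrt\mu$.} The function $\sqrt\mu$ is independent of both $x$ and $t$, so $\partial_{x_{1}}\sqrt\mu=0$ and each $H_{\delta_{i}}=\frac{1}{\delta_{i}+1}t^{\delta_{i}+1}\partial_{x_{1}}-t^{\delta_{i}}A_{+,1}$ acts on it, and on its subsequent $x$-independent images, simply as $-t^{\delta_{i}}A_{+,1}$; since $A_{+,1}$ acts only in $v_{1}$ and $t$ is a scalar parameter, iteration yields $H^{m-l}_{\delta_{1}}H^{n-p}_{\delta_{2}}\sqrt\mu=(-1)^{N}t^{\delta_{1}(m-l)+\delta_{2}(n-p)}A_{+,1}^{N}\sqrt\mu$, which is still $x$-independent. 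Hence $|||\omega_{t}H^{m-l}_{\delta_{1}}H^{n-p}_{\delta_{2}}\sqrt\mu|||=|\mathbb{T}^{3}|^{1/2}\,t^{\delta_{1}(m-l)+\delta_{2}(n-p)}\|\omega_{t}A_{+,1}^{N}\sqrt\mu\|_{\sigma}$. Writing $\sqrt\mu(v)=\psi_{0}(v_{1})\psi_{0}(v_{2})\psi_{0}(v_{3})$ with $\psi_{k}$ the $L^{2}$-normalized Hermite functions of $\mathcal H_{1}=-\partial_{v_{1}}^{2}+\tfrac14 v_{1}^{2}$ (so $A_{+,1}\psi_{k}=\sqrt{k+1}\,\psi_{k+1}$ and $\partial_{v_{1}}\psi_{k}=\tfrac12(\sqrt{k}\,\psi_{k-1}-\sqrt{k+1}\,\psi_{k+1})$), the creation relation gives $A_{+,1}^{N}\sqrt\mu=\sqrt{N!}\,\psi_{N}(v_{1})\psi_{0}(v_{2})\psi_{0}(v_{3})$, whence $\|A_{+,1}^{N}\sqrt\mu\|_{L^{2}_{v}}=\sqrt{N!}$ and $\|\nabla_{v}A_{+,1}^{N}\sqrt\mu\|_{L^{2}_{v}}\lesssim\sqrt{N}\,\sqrt{N!}$.

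\emph{The weighted $\sigma$-norm of $A_{+,1}^{N}\sqrt\mu$.} By \eqref{singular integration}, $\bar a_{jk}=a_{jk}*\mu$ satisfies $|\bar a_{jk}(v)|\lesssim\langle v\rangle^{\gamma+2}$; since $(\bar a_{jk})$ is positive semidefinite, the definition of $\|\cdot\|_{\sigma}$ yields the upper bound $\|u\|_{\sigma}^{2}\lesssim\|\langle v\rangle^{\frac{\gamma+2}{2}}\nabla_{v}u\|_{L^{2}_{v}}^{2}+\|\langle v\rangle^{\frac{\gamma+4}{2}}u\|_{L^{2}_{v}}^{2}$ for any suitable $u$. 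Applying this with $u=\omega_{t}A_{+,1}^{N}\sqrt\mu$, writing $\nabla_{v}u=(\nabla_{v}\omega_{t})A_{+,1}^{N}\sqrt\mu+\omega_{t}\nabla_{v}A_{+,1}^{N}\sqrt\mu$ with $|\nabla_{v}\omega_{t}|\lesssim\langle v\rangle^{b-1}\omega_{t}$ (here $b\le2$ is used), noting that $\omega_{t}^{2}\le e^{2c_{0}\langle v\rangle^{b}}$ is dominated by the Gaussian in $\psi_{N}$ when $c_{0}$ is small (since $\langle v\rangle^{b}\lesssim_{\epsilon}1+\epsilon|v|^{2}$ for any $\epsilon>0$ as $b\le2$), and invoking the standard estimate $\|\langle v_{1}\rangle^{r}e^{\varepsilon v_{1}^{2}}\psi_{N}\|_{L^{2}}\lesssim\rho^{N/2}$ valid for $\varepsilon<\tfrac14$ (with $\rho$ and the implicit constant depending only on $\varepsilon,r$), one obtains
\[
\|\omega_{t}A_{+,1}^{N}\sqrt\mu\|_{\sigma}\ \lesssim\ \rho^{N/2}(1+N)^{c}\sqrt{N!}\ \le\ C_{0}^{N/2}\sqrt{(N+3)!},
\]
because multiplication by $\langle v\rangle^{\frac{\gamma+2}{2}},\langle v\rangle^{\frac{\gamma+4}{2}}$ and one velocity derivative inflate the effective Hermite degree by a bounded amount (absorbed in $(1+N)^{c}$, hence in the slack $\sqrt{(N+3)!/N!}$), while $\rho^{N/2}$ is geometric. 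Inserting this into the first display with $N=m-l+n-p$ and absorbing $|\mathbb{T}^{3}|^{1/2}C_{4}$ and the remaining constants into $C_{5}$ gives the claimed inequality; the hypothesis $m,n\in\mathbb{N}_{+}$ is not actually needed here.

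\emph{Main obstacle.} Everything apart from the last $\|\cdot\|_{\sigma}$-estimate is bookkeeping; the real point is to pin down sharply the factorial growth of $\|\omega_{t}A_{+,1}^{N}\sqrt\mu\|_{\sigma}$: one extracts the $\sqrt{N!}$ from the normalization $\|A_{+,1}^{N}\sqrt\mu\|_{L^{2}}=\sqrt{N!}$, shows that the loss from the two weights $\langle v\rangle^{(\gamma+2)/2},\langle v\rangle^{(\gamma+4)/2}$ and from one velocity derivative is no worse than the slack factor $\sqrt{(N+3)!/N!}$, and checks that the exponential weight $\omega_{t}$ — subcritical because $b\le2$ and $c_{0}$ is small — contributes only a geometric factor $\rho^{N/2}$, uniformly for $0<t\le T$.
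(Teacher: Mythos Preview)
Your proof is correct and follows essentially the same route as the paper: both use $\mathcal L_{2}f=-\Gamma(f,\sqrt\mu)$ together with the Leibniz-type expansion and the trilinear estimate (you invoke Proposition~\ref{H-m-Gamma} directly, the paper unwinds it via Lemma~\ref{commutator-nonlinear} and the sum over $|\alpha|\le3$, which is equivalent since $\sqrt\mu$ is $x$-independent), then reduce to bounding $\|\omega_{t}H^{m-l}_{\delta_{1}}H^{n-p}_{\delta_{2}}\sqrt\mu\|_{\sigma}$. The only difference is that the paper quotes this last estimate from Proposition~2.3 of~\cite{C-X-X}, whereas you supply a self-contained Hermite-function sketch; your outline is correct (the creation identity $A_{+,1}^{N}\sqrt\mu=\sqrt{N!}\,\psi_{N}\otimes\psi_{0}\otimes\psi_{0}$, the upper bound $\|u\|_{\sigma}^{2}\lesssim\|\langle v\rangle^{\frac{\gamma+2}{2}}\nabla_{v}u\|^{2}+\|\langle v\rangle^{\frac{\gamma+4}{2}}u\|^{2}$, and the geometric-in-$N$ growth of $\|\langle v_{1}\rangle^{r}e^{\varepsilon v_{1}^{2}}\psi_{N}\|_{L^{2}}$ for $\varepsilon<\tfrac14$ are all valid), and your remark that $m,n\in\mathbb N_{+}$ is not actually used is also correct.
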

\begin{proof}
     Since $\mathcal L_{2}f=-\Gamma(f, \sqrt\mu)$, we have
\begin{align*}     
     &\left(\omega_{t} H^{m}_{\delta_{1}}H^{n}_{\delta_{2}}\mathcal L_{2}f, \omega_{t} H^{m}_{\delta_{1}}H^{n}_{\delta_{2}}f\right)_{H^{3}_{x}L^{2}_{v}}\\
     &=\sum_{|\alpha|\le3}\sum_{l=0}^{m}\sum_{p=0}^{n}\binom{m}{l}\binom{n}{p}\int_{\mathbb T^{3}_{x}}\left(\omega_{t}\Gamma(H^{l}_{\delta_{1}}H^{p}_{\delta_{2}}\partial^{\alpha}_{x}f, \ H^{m-l}_{\delta_{1}}H^{n-p}_{\delta_{2}}\sqrt\mu), \omega_{t} H^{m}_{\delta_{1}}H^{n}_{\delta_{2}}\partial^{\alpha}_{x}f\right)_{L^{2}_{x, v}}dx,
\end{align*}      
then follows immediately from Lemma \ref{commutator-nonlinear} that
\begin{align*}     
     \left|\left(\omega_{t} H^{m}_{\delta_{1}}H^{n}_{\delta_{2}}\mathcal L_{2}f, \omega_{t} H^{m}_{\delta_{1}}H^{n}_{\delta_{2}}f\right)_{H^{3}_{x}L^{2}_{v}}\right|
     &\le C_{4}\sum_{l=0}^{m}\sum_{p=0}^{n}\binom{m}{l}\binom{n}{p}\sum_{|\alpha|\le3}\int_{\mathbb T^{3}_{x}}\|H^{l}_{\delta_{1}}H^{p}_{\delta_{2}}\partial^{\alpha}_{x}f(x, \cdot)\|_{L^{2}_{v}}\\
     &\quad\times\|\omega_{t} H^{m-l}_{\delta_{1}}H^{n-p}_{\delta_{2}}\sqrt\mu\|_{\sigma}\|\omega_{t} H^{m}_{\delta_{1}}H^{n}_{\delta_{2}}\partial^{\alpha}_{x}f(x, \cdot)\|_{\sigma}dx,
\end{align*} 
From Proposition 2.3 of~\cite{C-X-X}, there exists a positive constant $C_{0}$, depends on $\gamma, \ b$ and $c_{0}$ such that
\begin{align*}     
     \|\omega_{t} H^{m-l}_{\delta_{1}}H^{n-p}_{\delta_{2}}\sqrt\mu\|_{\sigma}\le\left(t^{\delta_{1}}\sqrt{C_{0}}\right)^{m-l}\left(t^{\delta_{2}}\sqrt{C_{0}}\right)^{n-p}\sqrt{(m-l+n-p+3)!}.
\end{align*}
Therefore, by using the Cauchy-Schwarz inequality, we have
\begin{align*}     
     &\left|\left(\omega_{t} H^{m}_{\delta_{1}}H^{n}_{\delta_{2}}\mathcal L_{2}f, \omega_{t} H^{m}_{\delta_{1}}H^{n}_{\delta_{2}}f\right)_{H^{3}_{x}L^{2}_{v}}\right|\le C_{5}\sum_{l=0}^{m}\sum_{p=0}^{n}\binom{m}{l}\binom{n}{p}\left(t^{\delta_{1}}\sqrt{C_{0}}\right)^{m-l}\left(t^{\delta_{2}}\sqrt{C_{0}}\right)^{n-p}\\
     &\qquad\qquad\qquad\times\sqrt{(m-l+n-p+3)!}\|H^{l}_{\delta_{1}}H^{p}_{\delta_{2}}f\|_{H^{3}_{x}L^{2}_{v}}|||\omega_{t} H^{m}_{\delta_{1}}H^{n}_{\delta_{2}}f|||.
\end{align*} 
\end{proof}


Next, we will prove the following upper bound for the operator $\mathcal L_{1}$.
\begin{prop}\label{H-m-L-1}
For any $m, n\in\mathbb N_{+}$, let $-3<\gamma<0$, then there exists a constant $C_{6}>0$, independent of $m$ and $n$, such that for all $\delta_{1}, \delta_{2}>2\max\{1, \frac{b-\gamma}{2b}\}$ and any suitable function $f$
\begin{align*}
     &\left|\left([\omega_{t} H^{m}_{\delta_{1}}H^{n}_{\delta_{2}}, \mathcal L_{1}]f, \omega_{t} H^{m}_{\delta_{1}}H^{n}_{\delta_{2}}f\right)_{H^{3}_{x}L^{2}_{v}}\right|
     \le\frac18|||\omega_{t} H^{m}_{\delta_{1}}H^{n}_{\delta_{2}}f|||^{2}+C_{6}\|\langle v\rangle^{\frac\gamma2}\omega_{t} H^{m}_{\delta_{1}}H^{n}_{\delta_{2}}f\|_{H^{3}_{x}L^{2}_{v}}\\
     &\qquad+C_{6}\sum_{l=1}^{m}\binom{m}{l}t^{\delta_{1}l}\sqrt{(l+1)!}|||\omega_{t} H^{m-l}_{\delta_{1}}H^{n}_{\delta_{2}}f|||\cdot|||\omega_{t} H^{m}_{\delta_{1}}H^{n}_{\delta_{2}}f|||\\
     &\qquad+C_{6}\sum_{p=1}^{n}\binom{n}{p}t^{\delta_{2}p}\sqrt{(p+1)!}|||\omega_{t} H^{m}_{\delta_{1}}H^{n-p}_{\delta_{2}}f|||\cdot|||\omega_{t} H^{m}_{\delta_{1}}H^{n}_{\delta_{2}}f|||\\
     &\qquad+C_{6}\sum_{l=1}^{m}\sum_{p=1}^{n}\binom{m}{l}\binom{n}{p}t^{\delta_{1}l}t^{\delta_{2}p}\sqrt{(l+p+1)!}|||\omega_{t} H^{m-l}_{\delta_{1}}H^{n-p}_{\delta_{2}}f|||\cdot|||\omega_{t} H^{m}_{\delta_{1}}H^{n}_{\delta_{2}}f|||.
\end{align*}
\end{prop}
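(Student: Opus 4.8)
The plan is to start from the operator identity $\mathcal L_1 f = A_{+,j}\big((a_{jk}*\mu)A_{-,k}f\big)$ from Lemma~\ref{representations}, and to compute the commutator $[\omega_t H^m_{\delta_1}H^n_{\delta_2},\,\mathcal L_1]$ by splitting it into two pieces: the contribution of commuting the vector fields $H^m_{\delta_1}H^n_{\delta_2}$ past the multiplier $a_{jk}*\mu$ (a Leibniz-type expansion in the spirit of Lemma~\ref{Leibniz-type formula}, writing $a_{jk}*\mu = a_{jk}*(\sqrt\mu\cdot\sqrt\mu)$), and the contribution of commuting the weight $\omega_t$ past the creation/annihilation operators $A_{\pm,k}$ and the derivatives hidden in $\mathcal H$. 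The first piece produces exactly the Leibniz sums $\sum_{l,p}\binom ml\binom np$ with factors $a_{jk}*(\sqrt\mu H^l_{\delta_1}H^p_{\delta_2}\sqrt\mu)$ acting on $H^{m-l}_{\delta_1}H^{n-p}_{\delta_2}(\cdots f)$; here the terms with $l=p=0$ are precisely the ``diagonal'' term $\omega_t(\mathcal L_1 H^m_{\delta_1}H^n_{\delta_2}f - \text{adjustments})$ which is \emph{not} part of the commutator, so only $l+p\ge1$ survives, and that is what yields the last three sums in the statement.

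The key estimates I would assemble are: (i) the growth bound for $\|\omega_t H^l_{\delta_1}H^p_{\delta_2}\sqrt\mu\|_\sigma$ from Proposition~2.3 of~\cite{C-X-X} (used already in Corollary~\ref{L2}), which gives $(t^{\delta_1}\sqrt{C_0})^l(t^{\delta_2}\sqrt{C_0})^p\sqrt{(l+p+1)!}$-type control — after absorbing $C_0$ and the $t^{\delta_i}$ powers into $C_6$ and noting $t\le T$, these produce the $t^{\delta_1 l}t^{\delta_2 p}\sqrt{(l+p+1)!}$ weights; (ii) the coercivity/continuity of $\mathcal L_1$ in the $\sigma$-norm, i.e. the bound $|(\omega_t\Gamma(F,G),\omega_t H)_{L^2_v}|\lesssim \|F\|_{2,\gamma/2}\|\omega_t G\|_\sigma\|\omega_t H\|_\sigma$ from Lemma~\ref{commutator-nonlinear}, applied with $F=\sqrt\mu$ (so that $\|\sqrt\mu\|_{2,\gamma/2}$ is just a constant) to estimate each Leibniz term by $|||\omega_t H^{m-l}_{\delta_1}H^{n-p}_{\delta_2}f|||\cdot|||\omega_t H^m_{\delta_1}H^n_{\delta_2}f|||$, then lifted to $H^3_x L^2_v$ as in Lemma~\ref{commutator-nonlinear-x} using that $H^3_x$ is an algebra; and (iii) for the weight-commutator piece, the pointwise bound $|\partial_{v_k}\omega_t|\lesssim \langle v\rangle^{b-1}\omega_t$ together with $b\le 2$, which lets me trade one velocity derivative for a factor $\langle v\rangle^{b-1}\le\langle v\rangle$, and then $\langle v\rangle^{b-1}\lesssim \langle v\rangle^{-\gamma/2}\cdot\langle v\rangle^{(\gamma+2b-2)/2}$ — since $-3<\gamma<0$ and $b\le 2$ one checks $b-1\le 1 \le (\gamma+2)/2 + (-\gamma/2)$ appropriately — so that this term is controlled by a small multiple of $|||\omega_t H^m_{\delta_1}H^n_{\delta_2}f|||^2$ (via the upper bound \eqref{upper bound} or \eqref{upper bound1}, absorbing $\|\langle v\rangle^{(\gamma+2)/2}(\cdots)\|$ into $\frac18|||\cdot|||^2$) plus a lower-order term $C_6\|\langle v\rangle^{\gamma/2}\omega_t H^m_{\delta_1}H^n_{\delta_2}f\|_{H^3_x L^2_v}$. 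Use Cauchy--Schwarz and Young's inequality to split the self-paired $l=p=0$-type weight-commutator contribution into the $\frac18|||\cdot|||^2$ absorption plus the $\langle v\rangle^{\gamma/2}$ remainder.

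Concretely the steps, in order, are: (1) write $\omega_t H^m_{\delta_1}H^n_{\delta_2}\mathcal L_1 f = \mathcal L_1(\omega_t H^m_{\delta_1}H^n_{\delta_2}f) + [\omega_t H^m_{\delta_1}H^n_{\delta_2},\mathcal L_1]f$ and expand the commutator using Lemma~\ref{representations} and the Leibniz rule of Lemma~\ref{Leibniz-type formula}, keeping track that $A_{\pm}$ commute with $H_\delta$ up to the known relations so the only genuine commutation is with the convolution multiplier and with $\omega_t$; (2) isolate the $\omega_t$-vs-$A_{\pm,k}$ commutator, which is a multiplication by a bounded-times-$\langle v\rangle^{b-1}$ symbol, estimate it by Cauchy--Schwarz to get the $\frac18|||\cdot|||^2 + C_6\|\langle v\rangle^{\gamma/2}\omega_t(\cdots)\|^2_{H^3_x L^2_v}$ contribution; (3) for each term with $l+p\ge1$, pull $\omega_t$ inside, recognize $a_{jk}*(\sqrt\mu H^l_{\delta_1}H^p_{\delta_2}\sqrt\mu)$ as $-\mathcal L_2$-type structure and apply Lemma~\ref{commutator-nonlinear-x} with the first slot $H^l_{\delta_1}H^p_{\delta_2}\sqrt\mu$ (which by Proposition~2.3 of~\cite{C-X-X} has the stated factorial/power growth), producing the three binomial sums; (4) integrate in $x$ over $\mathbb T^3$ and use the algebra property of $H^3_x$ to conclude. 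The main obstacle I anticipate is step~(2): carefully showing that the weight commutator really only costs a gain of $\langle v\rangle^{b-1}$ (not more) and that for \emph{all} $-3<\gamma<0$ and $0<b\le 2$ this gain is dominated by the available coercivity $\langle v\rangle^{(\gamma+2)/2}$ from \eqref{upper bound} — i.e. checking $b-1 \le \frac{\gamma+2}{2}$ fails when $\gamma$ is very negative, so one must instead split $\langle v\rangle^{b-1}\omega_t = \langle v\rangle^{\gamma/2}\cdot\langle v\rangle^{b-1-\gamma/2}\omega_t$ and absorb the high part into $|||\cdot|||^2$ only after another application of the interpolation/weight trick, which is where all the delicacy (and the appearance of the $\langle v\rangle^{\gamma/2}$ remainder term) lies. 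The bookkeeping of the binomial sums and the factorials is routine once the single-term bound is in hand.
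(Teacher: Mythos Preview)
Your decomposition $[\omega_t H^m_{\delta_1}H^n_{\delta_2},\mathcal L_1]=\omega_t[H^m_{\delta_1}H^n_{\delta_2},\mathcal L_1]+[\omega_t,\mathcal L_1]H^m_{\delta_1}H^n_{\delta_2}$ is sound, and your treatment of the first piece is actually \emph{different} from the paper's and arguably cleaner. The paper does not invoke the trilinear $\Gamma$-estimate here; instead it proves the algebraic identity $\sqrt\mu\,H^l_{\delta_1}H^p_{\delta_2}\sqrt\mu = t^{\delta_1 l}t^{\delta_2 p}\partial_{v_1}^{l+p}\mu$, then controls each Leibniz term by the pointwise bound $|\partial_{v_1}^l\bar a_{jk}|\lesssim\langle v\rangle^{\gamma}\sqrt{l!}$ for $l\ge 2$, and must separately commute $A_{-,k}$ past $\omega_t$ and past $H^{m-l}_{\delta_1}H^{n-p}_{\delta_2}$ (their terms $I_2,I_3,I_5,I_6,I_8,I_9$). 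Your route via $H^mH^n\Gamma(\sqrt\mu,f)=\sum_{l,p}\binom ml\binom np\Gamma(H^lH^p\sqrt\mu,H^{m-l}H^{n-p}f)$ and Lemma~\ref{commutator-nonlinear-x}, together with $\|H^l_{\delta_1}H^p_{\delta_2}\sqrt\mu\|_{L^2_v}=t^{\delta_1 l}t^{\delta_2 p}\sqrt{(l+p)!}$, sidesteps all of that in one stroke, because $[H_\delta,A_{+,k}]=0$ makes the $\Gamma$-Leibniz formula clean.

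The genuine gap is in your step~(2), the pure weight commutator $[\omega_t,\mathcal L_1]$. Your proposed resolution --- splitting $\langle v\rangle^{b-1}=\langle v\rangle^{\gamma/2}\cdot\langle v\rangle^{b-1-\gamma/2}$ and interpolating --- does not close: after pairing $\partial_k\omega_t=\tfrac{c_0 b}{1+t}\langle v\rangle^{b-2}v_k\omega_t$ with $\bar a_{jk}$ you face $\langle v\rangle^{b-2}\,\bar a_{jk}v_k\sim\langle v\rangle^{\gamma+b}$, and to absorb into $|||\cdot|||$ you would need $\gamma/2+b\le(\gamma+2)/2$, i.e.\ $b\le 1$, which fails for $1<b\le 2$. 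What actually saves the estimate is the \emph{null-space structure} $\sum_k a_{jk}(v)v_k=0$ (and $\sum_j a_{jk}(v)v_j=0$): the paper exploits this by writing out the convolution, decomposing $\mathbb R^3_v\times\mathbb R^3_{v'}$ into regions $\Omega_1\cup\Omega_2\cup\Omega_3$, Taylor-expanding $a_{jk}(v-v')$ about $v$ in $\Omega_3$, and then using the anisotropic bound $\|\langle v\rangle^{(\gamma+2)/2}(\mathbf I-\mathbf P_v)\nabla_{\mathcal H_-}F\|\lesssim\|F\|_\sigma$ from \eqref{upper bound1} (the $\mathbf P_v\cdot\mathbf P_v$ contribution vanishes by the null space, so every surviving term carries at least one factor with the better weight $(\gamma+2)/2$). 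This is the mechanism behind the paper's $I_{1,1}$ analysis and its reuse for the $l=p=0$ part of $I_2$ and for $Q_3$; no amount of scalar interpolation in $\langle v\rangle$ will substitute for it. Once you recognize that this anisotropic cancellation is the missing ingredient in step~(2), the rest of your outline goes through.
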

\begin{proof}
     Let $F_{m, n}=\omega_{t} H^{m}_{\delta_{1}}H^{n}_{\delta_{2}}f$, from the representation for $\mathcal L_{1}$ in Lemma \ref{representations} and the fact $[H_{\delta_{l}}, A_{+, j}]=0$, it follows that
\begin{align*}
     \left([\omega_{t} H^{m}_{\delta_{1}}H^{n}_{\delta_{2}}, \mathcal L_{1}]f, F_{m, n}\right)_{H^{3}_{x}L^{2}_{v}}
     &=\sum_{|\alpha|\le3}\left(A_{+, j}\left(\omega_{t}H^{m}_{\delta_{1}}H^{n}_{\delta_{2}}\left((a_{jk}*\mu)A_{-, k}\partial^{\alpha}_{x}f\right)\right), \partial^{\alpha}_{x}F_{m, n}\right)_{L^{2}_{x, v}}\\
     &\quad-\sum_{|\alpha|\le3}\left(A_{+, j}\left((a_{jk}*\mu)\omega_{t}H^{m}_{\delta_{1}}H^{n}_{\delta_{2}}A_{-, k}\partial^{\alpha}_{x}f\right), \partial^{\alpha}_{x}F_{m, n}\right)_{L^{2}_{x, v}}\\
     &\quad+\sum_{|\alpha|\le3}\left([\omega_{t}, A_{+, j}]H^{m}_{\delta_{1}}H^{n}_{\delta_{2}}\left((a_{jk}*\mu)A_{-, k}\partial^{\alpha}_{x}f\right), \partial^{\alpha}_{x}F_{m, n}\right)_{L^{2}_{x, v}},
\end{align*}
then applying integration by parts, and Lemma \ref{Leibniz-type formula}, one can obtain that
\begin{align*}
     &\left([\omega_{t} H^{m}_{\delta_{1}}H^{n}_{\delta_{2}}, \mathcal L_{1}]f, F_{m, n}\right)_{H^{3}_{x}L^{2}_{v}}\\
     &=\sum_{|\alpha|\le3}\sum_{l=1}^{m}\sum_{p=0}^{n}\binom{m}{l}\binom{n}{p}\left(a_{jk}*\left(\sqrt\mu H^{l}_{\delta_{1}}H^{p}_{\delta_{2}}\sqrt\mu\right)\omega_{t}H^{m-l}_{\delta_{1}}H^{n-p}_{\delta_{2}}A_{-, k}\partial^{\alpha}_{x}f, A_{-, j}\partial^{\alpha}_{x}F_{m, n}\right)_{L^{2}_{x, v}}\\
     &\quad+\sum_{|\alpha|\le3}\sum_{p=1}^{n}\binom{n}{p}\left(a_{jk}*\left(\sqrt\mu H^{p}_{\delta_{2}}\sqrt\mu\right)\omega_{t}H^{m}_{\delta_{1}}H^{n-p}_{\delta_{2}}A_{-, k}\partial^{\alpha}_{x}f, A_{-, j}\partial^{\alpha}_{x}F_{m, n}\right)_{L^{2}_{x, v}}\\
     &\quad+\sum_{|\alpha|\le3}\left([\omega_{t}, A_{+, j}]H^{m}_{\delta_{1}}H^{n}_{\delta_{2}}\left((a_{jk}*\mu)A_{-, k}\partial^{\alpha}_{x}f\right), \partial^{\alpha}_{x}F_{m, n}\right)_{L^{2}_{x, v}}=Q_{1}+Q_{2}+Q_{3}.
\end{align*}
Now, we will show that 
\begin{align}\label{diff}
     \sqrt\mu H^{l}_{\delta_{1}}H^{p}_{\delta_{2}}\sqrt\mu= \left(-t^{\delta_{1}}\right)^{l}\left(-t^{\delta_{2}}\right)^{p}\sqrt\mu A_{+, 1}^{p+l}\sqrt\mu=t^{\delta_{1}l}t^{\delta_{2}p}\partial^{l+p}_{v_{1}}\mu, \ \forall \ l+p\ge1.
\end{align}
For the case of $p+l=1$, without loss of generality, we assume $l=1$, then
\begin{align*}
     \sqrt\mu H_{\delta_{1}}\sqrt\mu=-t^{\delta_{1}}\sqrt\mu A_{+, 1}\sqrt\mu=-t^{\delta_{1}}\sqrt\mu\left(\frac{v_{1}}{2}\sqrt\mu-\partial_{v_{1}}\sqrt\mu\right)=t^{\delta_{1}}\partial_{v_{1}}\mu.
\end{align*}
Assume that \eqref{diff} holds for $l+p-1$, then for the case of $l+p$, we have
\begin{align*}
     \sqrt\mu H_{\delta_{1}}\left(H^{l-1}_{\delta_{1}}H^{p}_{\delta_{2}}\sqrt\mu\right)&=H_{\delta_{1}}\left(\sqrt\mu H^{l-1}_{\delta_{1}}H^{p}_{\delta_{2}}\sqrt\mu\right)+\left[\sqrt\mu, H_{\delta_{1}}\right]\left(H^{l-1}_{\delta_{1}}H^{p}_{\delta_{2}}\sqrt\mu\right)\\
     &=t^{\delta_{1}}\partial_{v_{1}}\left(\sqrt\mu H^{l-1}_{\delta_{1}}H^{p}_{\delta_{2}}\sqrt\mu\right)=t^{\delta_{1}l}t^{\delta_{2}p}\partial^{l+p}_{v_{1}}\mu.
\end{align*}
Noting that 
\begin{align}\label{A-omega}
     [\omega_{t}, A_{\pm, k}]=\pm\partial_{k}\omega_{t}=\frac{\pm c_{0}b}{1+t}\langle v\rangle^{b-2}v_{k}\omega_{t}, \quad 1\le k\le3,
\end{align}
then we have
\begin{align*}
     &Q_{1}+Q_{2}=\sum_{|\alpha|\le3}\sum_{l=1}^{m}\binom{m}{l}t^{\delta_{1}l}\left(\partial^{l}_{v_{1}}\bar a_{jk}A_{-, k}\partial^{\alpha}_{x}F_{m-l, n}, \  A_{-, j}\partial^{\alpha}_{x}F_{m, n}\right)_{L^{2}_{x, v}}\\
     &\quad-\frac{c_{0}b}{1+t}\sum_{|\alpha|\le3}\sum_{l=0}^{m}\binom{m}{l}t^{\delta_{1}l}\left(a_{jk}*(v_{k}\partial^{l}_{v_{1}}\mu)\langle v\rangle^{b-2}\partial^{\alpha}_{x}F_{m-l, n}, \ A_{-, j}\partial^{\alpha}_{x}F_{m, n}\right)_{L^{2}_{x, v}}\\
     &\quad+\sum_{|\alpha|\le3}\sum_{l=0}^{m}\binom{m}{l}t^{\delta_{1}l}\left(\partial^{l}_{v_{1}}\bar a_{jk}\omega_{t}[H^{m-l}_{\delta_{1}}H^{n}_{\delta_{2}}, A_{-, k}]\partial^{\alpha}_{x}f, \  A_{-, j}\partial^{\alpha}_{x}F_{m, n}\right)_{L^{2}_{x, v}}\\
     &\quad+\sum_{|\alpha|\le3}\sum_{p=1}^{n}\binom{n}{p}t^{\delta_{2}p}\left(\partial^{p}_{v_{1}}\bar a_{jk}A_{-, k}\partial^{\alpha}_{x}F_{m, n-p}, \ A_{-, j}\partial^{\alpha}_{x}F_{m, n}\right)_{L^{2}_{x, v}}\\
     &\quad-\frac{c_{0}b}{1+t}\sum_{|\alpha|\le3}\sum_{p=1}^{n}\binom{n}{p}t^{\delta_{2}p}\left(a_{jk}*\left(v_{k}\partial^{p}_{v_{1}}\mu\right)\langle v\rangle^{b-2}\partial^{\alpha}_{x}F_{m, n-p}, \ A_{-, j}\partial^{\alpha}_{x}F_{m, n}\right)_{L^{2}_{x, v}}\\
     &\quad+\sum_{|\alpha|\le3}\sum_{p=1}^{n}\binom{n}{p}t^{\delta_{2}p}\left(\partial^{p}_{v_{1}}\bar a_{jk}\omega_{t} \left[H^{m}_{\delta_{1}}H^{n-p}_{\delta_{2}}, A_{-, k}\right]\partial^{\alpha}_{x}f, \ A_{-, j}\partial^{\alpha}_{x}F_{m, n}\right)_{L^{2}_{x, v}}\\
     &\quad+\sum_{|\alpha|\le3}\sum_{l=1}^{m}\sum_{p=1}^{n}\binom{m}{l}\binom{n}{p}t^{\delta_{1}l}t^{\delta_{2}p}\left(\partial^{p+l}_{v_{1}}\bar a_{jk}A_{-, k}\partial^{\alpha}_{x}F_{m-l, n-p}, \ A_{-, j}\partial^{\alpha}_{x}F_{m, n}\right)_{L^{2}_{x, v}}\\
     &\quad-\frac{c_{0}b}{1+t}\sum_{|\alpha|\le3}\sum_{l=1}^{m}\sum_{p=1}^{n}\binom{m}{l}\binom{n}{p}t^{\delta_{1}l}t^{\delta_{2}p}\left(a_{jk}*\left(v_{k}\partial^{p+l}_{v_{1}}\mu\right)\langle v\rangle^{b-2}\partial^{\alpha}_{x}F_{m-l, n-p}, \ A_{-, j}\partial^{\alpha}_{x}F_{m, n}\right)_{L^{2}_{x, v}}\\
     &\quad+\sum_{|\alpha|\le3}\sum_{l=1}^{m}\sum_{p=1}^{n}\binom{m}{l}\binom{n}{p}t^{\delta_{1}l}t^{\delta_{2}p}\left(\partial^{p+l}_{v_{1}}\bar a_{jk}\omega_{t}\left[H^{m-l}_{\delta_{1}}H^{n-p}_{\delta_{2}}, A_{-, k}\right]\partial^{\alpha}_{x}f, \ A_{-, j}\partial^{\alpha}_{x}F_{m, n}\right)_{L^{2}_{x, v}}\\
     &=I_{1}+I_{2}+I_{3}+I_{4}+I_{5}+I_{6}+I_{7}+I_{8}+I_{9},
\end{align*}
where $\bar a_{jk}=a_{jk}*\mu$. For the term $I_{1}$, we can write it as 
\begin{align*}
     I_{1}&=\sum_{|\alpha|\le3}mt^{\delta_{1}}\left(\partial_{v_{1}}\bar a_{jk}A_{-, k}\partial^{\alpha}_{x}F_{m-1, n}, \ A_{-, j}\partial^{\alpha}_{x}F_{m, n}\right)_{L^{2}_{x, v}}\\
     &\quad+\sum_{|\alpha|\le3}\sum_{l=2}^{m}\binom{m}{l}t^{\delta_{1}l}\left(\partial^{l}_{v_{1}}\bar a_{jk}A_{-, k}\partial^{\alpha}_{x}F_{m-l, n}, \ A_{-, j}\partial^{\alpha}_{x}F_{m, n}\right)_{L^{2}_{x, v}}=I_{1, 1}+I_{1, 2}.
\end{align*}     

For the term $I_{1, 1}$, decomposing $\mathbb R^{3}\times\mathbb R^{3}=\left\{|v|\le1\right\}\cup\left\{2|v'|\ge|v|, |v|\ge1\right\}\cup\left\{2|v'|\le|v|, |v|\ge1\right\}=\Omega_{1}\cup\Omega_{2}\cup\Omega_{3}$.
In $\Omega_{1}\cup\Omega_{2}$, noting that $|\partial_{v_{1}} a_{jk}|\lesssim|v|^{\gamma+1}$, then follows immediately from \eqref{singular integration} that
$$|\partial_{v_{1}}\bar a_{jk}|=|\partial_{v_{1}} a_{jk}*\mu|\lesssim\langle v\rangle^{\gamma},$$
by using \eqref{upper bound1} and the Cauchy-Schwarz inequality, we have
\begin{align*}
     \left|I_{1, 1}\big|_{\Omega_{1}\cup\Omega_{2}}\right|&\lesssim mt^{\delta_{1}}|||F_{m-1, n}|||\cdot|||F_{m, n}|||.
\end{align*}
In $\Omega_{3}$, using Taylor's expansion
$$a_{jk}(v-v')=a_{jk}(v)+\sum_{l=1}^{3}\int_{0}^{1}\partial_{l}a_{jk}(v-sv')dsv'_{l},$$
since
$$\sum_{j}a_{jk}v_{j}=\sum_{k}a_{jk}v_{k}=0,$$
we can obtain that 
\begin{align*}
     &I_{1, 1}\big|_{\Omega_{3}}=\sum_{|\alpha|\le3}mt^{\delta_{1}}\int_{\Omega_{3}\times\mathbb T^{3}_{x}}\partial_{1}A(v)\mu(v')\Big[\left(\mathbf I-\mathbf P_{v}\right)\nabla_{\mathcal H_{-}}\partial^{\alpha}_{x}F_{m-1, n}\left(\mathbf I-\mathbf P_{v}\right)\nabla_{\mathcal H_{-}}\partial^{\alpha}_{x}F_{m, n}\\
     &\quad+\mathbf P_{v}\nabla_{\mathcal H_{-}}\partial^{\alpha}_{x}F_{m-1, n}\left(\mathbf I-\mathbf P_{v}\right)\nabla_{\mathcal H_{-}}\partial^{\alpha}_{x}F_{m, n}+\left(\mathbf I-\mathbf P_{v}\right)\nabla_{\mathcal H_{-}}\partial^{\alpha}_{x}F_{m-1, n}\mathbf P_{v}\nabla_{\mathcal H_{-}}\partial^{\alpha}_{x}F_{m, n}\Big]\\
     &\quad+\sum_{|\alpha|\le3}mt^{\delta_{1}}\sum_{l=1}^{3}\int_{\Omega_{3}\times\mathbb T^{3}_{x}}\int_{0}^{1}\partial_{1l}a_{jk}(v-sv')dsv'_{l}A_{-, k}\partial^{\alpha}_{x}F_{m-1, n}A_{-, j}\partial^{\alpha}_{x}F_{m, n},
\end{align*}
since $|\partial_{1}a_{jk}(v)|\lesssim\langle v\rangle^{\gamma+1}$ and $|\partial_{l}\partial_{1}a_{jk}(v-sv')|\lesssim\langle v\rangle^{\gamma}$ for all $(v', v)\in\Omega_{3}$, then using \eqref{upper bound1} and the Cauchy-Schwarz inequality, we have
\begin{align*}
     \left|I_{1, 1}\big|_{\Omega_{3}}\right|&\lesssim mt^{\delta_{1}}|||F_{m-1, n}|||\cdot|||F_{m, n}|||.
\end{align*}
An argument similar to the one used in the Lemma 2.1 of~\cite{LX2} shows that
\begin{equation}\label{derivation a}
   |\partial^{l}_{v_{1}}\bar a_{jk}|\lesssim \langle v\rangle^{\gamma}\sqrt{l!}, \quad \forall \ l\ge2,
\end{equation}
thus, applying \eqref{upper bound1} and Cauchy-Schwarz inequality 
\begin{align*}
     \left|I_{1, 2}\right|&\lesssim\sum_{l=2}^{m}\binom{m}{l}t^{\delta_{1}l}\sqrt{l!}|||F_{m-l, n}|||\cdot|||F_{m, n}|||.
\end{align*}
For the term $I_{2}$, noting that $[v_{k}, \partial_{v_{1}}]=\delta_{1k}$, then we can write is as
\begin{align*}
     I_{2}&=-\frac{c_{0}b}{1+t}\sum_{|\alpha|\le3}\left(a_{jk}*(v_{k}\mu)\langle v\rangle^{b-2}\partial^{\alpha}_{x}F_{m, n}, \ A_{-, j}\partial^{\alpha}_{x}F_{m, n}\right)_{L^{2}_{x, v}}\\
     &\quad-\frac{c_{0}b}{1+t}\sum_{|\alpha|\le3}mt^{\delta_{1}}\left(a_{jk}*(\delta_{1k}\mu)\langle v\rangle^{b-2}\partial^{\alpha}_{x}F_{m-1, n}, \ A_{-, j}\partial^{\alpha}_{x}F_{m, n}\right)_{L^{2}_{x, v}}\\
     &\quad-\frac{c_{0}b}{1+t}\sum_{|\alpha|\le3}\sum_{l=1}^{m}\binom{m}{l}t^{\delta_{1}l}\left(\partial^{l}_{v_{1}}a_{jk}*(v_{k}\mu)\langle v\rangle^{b-2}\partial^{\alpha}_{x}F_{m-l, n}, \ A_{-, j}\partial^{\alpha}_{x}F_{m, n}\right)_{L^{2}_{x, v}}\\
     &\quad-\frac{c_{0}b}{1+t}\sum_{|\alpha|\le3}\sum_{l=2}^{m}\binom{m}{l}t^{\delta_{1}l}\left(\partial^{l-1}_{v_{1}}a_{jk}*(\delta_{1k}\mu)\langle v\rangle^{b-2}\partial^{\alpha}_{x}F_{m-l, n}, \ A_{-, j}\partial^{\alpha}_{x}F_{m, n}\right)_{L^{2}_{x, v}}.
\end{align*}
Noting that $0< b\le 2$, we discuss it as $I_{1, 1}$, then the first two terms can be bounded by 
$$\|\langle v\rangle^{\gamma/2}F_{m, n}\|_{H^{3}_{x}L^{2}_{v}}|||F_{m, n}|||\quad {\rm and} \quad mt^{\delta_{1}}|||F_{m-1, n}|||\cdot|||F_{m, n}|||.$$
To bound the other terms, we use \eqref{upper bound1}, \eqref{derivation a} and the Cauchy-Schwarz inequality, it can be bounded by
$$\sum_{l=1}^{m}\binom{m}{l}t^{\delta_{1}l}\sqrt{l!}|||F_{m-l, n}|||\cdot|||F_{m, n}|||.$$
Thus, we have
\begin{align*}
     \left|I_{2}\right|&\lesssim\|\langle v\rangle^{\gamma/2}F_{m, n}\|_{H^{3}_{x}L^{2}_{v}}|||F_{m, n}|||+\sum_{l=1}^{m}\binom{m}{l}t^{\delta_{1}l}\sqrt{l!}|||F_{m-l, n}|||\cdot|||F_{m, n}|||.
\end{align*}
For the term $I_{3}$, since
\begin{align*}
     &[H_{\delta_{j}}, A_{-, k}]=-t^{\delta}[A_{+, 1}, A_{-, k}]=0, \ (k\ne1), \ j=1, 2,\\
     &[H_{\delta_{j}}, A_{-, 1}]=-t^{\delta}[A_{+, 1}, A_{-, 1}]=t^{\delta_{j}}, \ j=1, 2,
\end{align*}
one can deduce that $H^{m}_{\delta_{1}}H^{n}_{\delta_{2}}A_{-, k}=A_{-, k}H^{m}_{\delta_{1}}H^{n}_{\delta_{2}}$ for $k\ne1$ and
$$H^{m}_{\delta_{1}}H^{n}_{\delta_{2}}A_{-, 1}=A_{-, 1}H^{m}_{\delta_{1}}H^{n}_{\delta_{2}}+mt^{\delta_{1}}H^{m-1}_{\delta_{1}}H^{n}_{\delta_{2}}+nt^{\delta_{2}}H^{m}_{\delta_{1}}H^{n-1}_{\delta_{2}},$$
these lead to
\begin{align*}
     I_{3}&=t^{\delta_{1}}\sum_{|\alpha|\le3}\sum_{l=0}^{m-1}\binom{m}{l}(m-l)t^{\delta_{1}l}\left(\partial^{l}_{v_{1}}\bar a_{jk}\delta_{1k}\partial^{\alpha}_{x}F_{m-l-1, n}, \ A_{-, j}\partial^{\alpha}_{x}F_{m, n}\right)_{L^{2}_{x, v}}\\
     &\quad+nt^{\delta_{2}}\sum_{|\alpha|\le3}\sum_{l=0}^{m}\binom{m}{l}t^{\delta_{1}l}\left(\partial^{l}_{v_{1}}\bar a_{jk}\delta_{1k}\partial^{\alpha}_{x}F_{m-l, n-1}, A_{-, j}\partial^{\alpha}_{x}F_{m, n}\right)_{L^{2}_{x, v}}.
\end{align*}
If $l=0$, we discuss it as $I_{1, 2}$, then it can be bounded by 
\begin{align*}
     mt^{\delta_{1}}|||F_{m-1, n}|||\cdot|||F_{m, n}|||+nt^{\delta_{2}}|||F_{m, n-1}|||\cdot|||F_{m, n}|||.
\end{align*}
If $l\ge1$, by using \eqref{upper bound1}, \eqref{derivation a} and the Cauchy-Schwarz, one can get
\begin{align*}
     &\left|t^{\delta_{1}}\sum_{|\alpha|\le3}\sum_{l=1}^{m-1}\binom{m}{l}(m-l)t^{\delta_{1}l}\left(\partial^{l}_{v_{1}}\bar a_{jk}\delta_{1k}\partial^{\alpha}_{x}F_{m-l-1, n}, \ A_{-, j}\partial^{\alpha}_{x}F_{m, n}\right)_{L^{2}_{x, v}}\right|\\
     &\quad+\left|nt^{\delta_{2}}\sum_{|\alpha|\le3}\sum_{l=1}^{m}\binom{m}{l}t^{\delta_{1}l}\left(\partial^{l}_{v_{1}}\bar a_{jk}\delta_{1k}\partial^{\alpha}_{x}F_{m-l, n-1}, \ A_{-, j}\partial^{\alpha}_{x}F_{m, n}\right)_{L^{2}_{x, v}}\right|\\
     &\lesssim \sum_{l=1}^{m-1}\binom{m}{l}t^{\delta_{1}l}\sqrt{l!}(m-l)t^{\delta_{1}}|||F_{m-l-1, n}|||\cdot|||F_{m, n}|||+\sum_{l=1}^{m}\binom{m}{l}t^{\delta_{1}l}\sqrt{l!}nt^{\delta_{2}}|||F_{m-l, n-1}|||\cdot|||F_{m, n}|||.
\end{align*}
Similarly, we can deduce that 
\begin{align*}
     \left|I_{4}+I_{5}+I_{6}\right|&\lesssim\sum_{p=1}^{n}\binom{n}{p}t^{\delta_{2}p}\sqrt{p!}|||F_{m, n-p}|||\cdot|||F_{m, n}|||+\sum_{p=1}^{n}\binom{n}{p}t^{\delta_{2}p}\sqrt{p!}mt^{\delta_{1}}|||F_{m-1, n-p}|||\cdot|||F_{m, n}|||\\
     &\quad+\sum_{p=1}^{n-1}\binom{n}{p}t^{\delta_{2}p}\sqrt{p!}(n-p)t^{\delta_{2}}|||F_{m, n-p-1}|||\cdot|||F_{m, n}|||,
\end{align*}
and
\begin{align*}
     &\left|I_{7}+I_{8}+I_{9}\right|\lesssim\sum_{l=1}^{m}\sum_{p=1}^{n}\binom{m}{l}\binom{n}{p}t^{\delta_{1}l}t^{\delta_{2}p}\sqrt{(l+p)!}|||F_{m-l, n-p}|||\cdot|||F_{m, n}|||\\
     &\quad+\sum_{l=1}^{m-1}\sum_{p=1}^{n}\binom{m}{l}\binom{n}{p}t^{\delta_{1}l}t^{\delta_{2}p}\sqrt{(l+p)!}(m-l)t^{\delta_{1}}|||F_{m-l-1, n-p}|||\cdot|||F_{m, n}|||\\
     &\quad+\sum_{l=1}^{m}\sum_{p=1}^{n-1}\binom{m}{l}\binom{n}{p}t^{\delta_{1}l}t^{\delta_{2}p}\sqrt{(l+p)!}(n-p)t^{\delta_{2}}|||F_{m-l, n-p-1}|||\cdot|||F_{m, n}|||.
\end{align*}
Next, we consider the term $Q_{3}$. Applying \eqref{diff} and \eqref{A-omega}, we can write it as
\begin{align*}
     Q_{3}&=\frac{c_{0}b}{1+t}\sum_{|\alpha|\le3}\sum_{l=0}^{m}\sum_{p=0}^{n}\binom{m}{l}\binom{n}{p}t^{\delta_{1}l}t^{\delta_{2}p}\left(a_{jk}*\left(v_{k} \partial_{v_{1}}^{l+p}\mu\right)\langle v\rangle^{b-2}A_{-, k}\partial^{\alpha}_{x}F_{m-l, n-p}, \ \partial^{\alpha}_{x}F_{m, n}\right)_{L^{2}_{x, v}}\\
     &\quad-\left(\frac{c_{0}b}{1+t}\right)^{2}\sum_{|\alpha|\le3}\sum_{l=0}^{m}\sum_{p=0}^{n}\binom{m}{l}\binom{n}{p}t^{\delta_{1}l}t^{\delta_{2}p}\left(a_{jk}*\left(v_{k}^{2} \partial_{v_{1}}^{l+p}\mu\right)\langle v\rangle^{2(b-2)}\partial^{\alpha}_{x}F_{m-l, n-p}, \ \partial^{\alpha}_{x}F_{m, n}\right)_{L^{2}_{x, v}}\\
     &\quad+\frac{c_{0}b}{1+t}\sum_{|\alpha|\le3}\sum_{l=0}^{m}\sum_{p=0}^{n}\binom{m}{l}\binom{n}{p}t^{\delta_{1}l}t^{\delta_{2}p}\left(a_{jk}*\left(v_{k} \partial_{v_{1}}^{l+p}\mu\right)\langle v\rangle^{b-2}\partial^{\alpha}_{x}F_{m-l, n-p}, \ \partial^{\alpha}_{x}F_{m, n}\right)_{L^{2}_{x, v}}.
\end{align*}
Since $c_{0}$ small, by the same technique, we can also prove that
\begin{align*}
     \left|Q_{3}\right|&\le\frac{1}{16}|||F_{m, n}|||^{2}
     +\tilde C_{6}\sum_{l=1}^{m}\binom{m}{l}t^{\delta_{1}l}\sqrt{l!}|||F_{m-l, n}|||\cdot|||F_{m, n}|||\\
     &\quad+\tilde C_{6}\sum_{p=1}^{n}\binom{n}{p}t^{\delta_{2}p}\sqrt{p!}|||F_{m, n-p}|||\cdot|||F_{m, n}|||\\
     &\quad+\tilde C_{6}\sum_{l=1}^{m}\sum_{p=1}^{n}\binom{m}{l}\binom{n}{p}t^{\delta_{1}l}t^{\delta_{2}p}\sqrt{(l+p)!}|||F_{m-l, n-p}|||\cdot|||F_{m, n}|||\\
     &\quad+\tilde C_{6}\sum_{l=0}^{m-1}\sum_{p=0}^{n}\binom{m}{l}\binom{n}{p}t^{\delta_{1}l}t^{\delta_{2}p}\sqrt{(l+p)!}(m-l)t^{\delta_{1}}|||F_{m-l-1, n-p}|||\cdot|||F_{m, n}||\\
     &\quad+\tilde C_{6}\sum_{l=0}^{m}\sum_{p=0}^{n-1}\binom{m}{l}\binom{n}{p}t^{\delta_{1}l}t^{\delta_{2}p}\sqrt{(l+p)!}(n-p)t^{\delta_{2}}|||F_{m-l, n-p-1}|||\cdot|||F_{m, n}|||.
\end{align*}
Using the change of variables $l+1\to l$ and $p+1\to p$, we have
\begin{align*}
     &\left|Q_{3}\right|\le\frac{1}{16}|||F_{m, n}|||^{2}+\tilde C_{6}\sum_{l=1}^{m}\sum_{p=1}^{n}\binom{m}{l}\binom{n}{p}t^{\delta_{1}l}t^{\delta_{2}p}\sqrt{(l+p+1)!}|||F_{m-l, n-p}|||\cdot|||F_{m, n}|||\\
     &\quad+\tilde C_{6}\sum_{l=1}^{m}\binom{m}{l}t^{\delta_{1}l}\sqrt{(l+1)!}|||F_{m-l, n}|||\cdot|||F_{m, n}|||+\tilde C_{6}\sum_{p=1}^{n}\binom{n}{p}t^{\delta_{2}p}\sqrt{(p+1)!}|||F_{m, n-p}|||\cdot|||F_{m, n}|||.
\end{align*}
Combining these results then follows from the Cauchy-Schwarz inequality that there exists a positive constant $C_{6}$, independent of $m$ and $n$, such that
\begin{align*}
     &\left|\left([\omega H^{m}_{\delta_{1}}H^{n}_{\delta_{2}}, \mathcal L_{1}]f, \omega_{t} H^{m}_{\delta_{1}}H^{n}_{\delta_{2}}f\right)_{H^{3}_{x}L^{2}_{v}}\right|
     \le\frac18|||F_{m, n}|||^{2}+C_{6}\|\langle v\rangle^{\frac\gamma2}F_{m, n}\|_{H^{3}_{x}L^{2}_{v}}\\
     &\quad+C_{6}\sum_{l=1}^{m}\binom{m}{l}t^{\delta_{1}l}\sqrt{(l+1)!}|||F_{m-l, n}|||\cdot|||F_{m, n}|||+C_{6}\sum_{p=1}^{n}\binom{n}{p}t^{\delta_{2}p}\sqrt{(p+1)!}|||F_{m, n-p}|||\cdot|||F_{m, n}|||\\
     &\quad+C_{6}\sum_{l=1}^{m}\sum_{p=1}^{n}\binom{m}{l}\binom{n}{p}t^{\delta_{1}l}t^{\delta_{2}p}\sqrt{(l+p+1)!}|||F_{m-l, n-p}|||\cdot|||F_{m, n}|||.
\end{align*}
\end{proof}

\section{Energy estimates for one directional derivations}\label{s5}

This section aims to establish the energy estimates for one-directional derivation. First, we consider the energy estimates of the solution.
\begin{lemma}\label{lemma4.1}
    Let $f$ be a solution of \eqref{1-2} with $\|f\|_{L^{\infty}([0, T]; H^3_xL^{2}_v(\omega_{t}))}$ small enough. Then we have
\begin{align}\label{m=0}
      \|\omega_{t} f(t)\|^2_{H^3_xL^{2}_v}+\frac{2c_{0}}{(1+T)^{2}}\int_{0}^{t}\|\langle v\rangle^{\frac b2}\omega_{\tau} f(\tau)\|^{2}_{H^{3}_{x}L^{2}_{v}}d\tau+\int_0^t|||\omega_{\tau} f(\tau)|||^2 d\tau\le (B\epsilon)^{2}, \quad \forall \ 0<t\le T,
\end{align}
with $B>0$ depends on $\gamma, \ b, \ c_{0}$ and $T$.
\end{lemma}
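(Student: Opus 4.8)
The plan is a weighted energy estimate for \eqref{1-2} in $H^{3}_{x}L^{2}_{v}(\omega_{t})$, closed by Grönwall's inequality. Since $\mathcal L$ and the multiplier $\omega_{t}$ act only in the velocity variable and commute with $\partial^{\alpha}_{x}$, I would apply $\partial^{\alpha}_{x}$ for $|\alpha|\le3$ to \eqref{1-2}, multiply by $\omega_{t}$, pair in $L^{2}_{x,v}$ with $\omega_{t}\partial^{\alpha}_{x}f$, and sum over $|\alpha|\le3$. Three elementary reductions are used: (i) $(\omega_{t}\,v\cdot\partial_{x}\partial^{\alpha}_{x}f,\ \omega_{t}\partial^{\alpha}_{x}f)_{L^{2}_{x,v}}=0$ by $x$-periodicity, since $\omega_{t}$ is $x$-independent; (ii) $\partial_{t}\omega_{t}=-\frac{c_{0}}{(1+t)^{2}}\langle v\rangle^{b}\omega_{t}$, so the time-derivative term contributes, besides $\frac12\frac{d}{dt}\|\omega_{t}f\|^{2}_{H^{3}_{x}L^{2}_{v}}$, the \emph{good} term $\frac{c_{0}}{(1+t)^{2}}\|\langle v\rangle^{b/2}\omega_{t}f\|^{2}_{H^{3}_{x}L^{2}_{v}}$ on the left --- this is exactly why the weight is taken decreasing in $t$, it self-generates the $\langle v\rangle^{b/2}$-dissipation that $\mathcal L$ cannot supply by itself when $\gamma<0$; (iii) $\partial^{\alpha}_{x}\Gamma(f,f)=\sum_{\beta\le\alpha}\binom{\alpha}{\beta}\Gamma(\partial^{\beta}_{x}f,\partial^{\alpha-\beta}_{x}f)$, so that, after summation, the nonlinear contribution is precisely the left-hand side of Lemma~\ref{commutator-nonlinear-x} with $g=h=f$. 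This produces the energy identity
\begin{align*}
\frac12\frac{d}{dt}\|\omega_{t}f\|^{2}_{H^{3}_{x}L^{2}_{v}}
&+\frac{c_{0}}{(1+t)^{2}}\|\langle v\rangle^{b/2}\omega_{t}f\|^{2}_{H^{3}_{x}L^{2}_{v}}
+\left(\omega_{t}\mathcal Lf,\ \omega_{t}f\right)_{H^{3}_{x}L^{2}_{v}}
=\left(\omega_{t}\Gamma(f,f),\ \omega_{t}f\right)_{H^{3}_{x}L^{2}_{v}}.
\end{align*}

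The core step --- and the one I expect to be the main obstacle --- is a \emph{weighted coercivity} bound $(\omega_{t}\mathcal Lf,\ \omega_{t}f)_{H^{3}_{x}L^{2}_{v}}\ge c_{*}|||\omega_{t}f|||^{2}-C\|\omega_{t}f\|^{2}_{H^{3}_{x}L^{2}_{v}}$ for some $c_{*}>0$. For $\mathcal L_{1}$ I would write $\omega_{t}\mathcal L_{1}f=\mathcal L_{1}(\omega_{t}f)+[\omega_{t},\mathcal L_{1}]f$; applying \eqref{L1} to $\omega_{t}f$ gives $(\mathcal L_{1}(\omega_{t}f),\omega_{t}f)_{H^{3}_{x}L^{2}_{v}}\ge(1-\epsilon_{1})|||\omega_{t}f|||^{2}-C_{\epsilon_{1}}\|\langle v\rangle^{\gamma/2}\omega_{t}f\|^{2}_{H^{3}_{x}L^{2}_{v}}$, and the remaining task is to show that the commutator is a small perturbation of the diffusion, $|([\omega_{t},\mathcal L_{1}]f,\omega_{t}f)_{H^{3}_{x}L^{2}_{v}}|\le\frac18|||\omega_{t}f|||^{2}+C\|\langle v\rangle^{\gamma/2}\omega_{t}f\|^{2}_{H^{3}_{x}L^{2}_{v}}$; this uses the identity $[\omega_{t},A_{\pm,k}]=\pm\frac{c_{0}b}{1+t}\langle v\rangle^{b-2}v_{k}\omega_{t}$ from \eqref{A-omega} together with $0<b\le2$ and $c_{0}$ small, and is precisely the computation in the proof of Proposition~\ref{H-m-L-1} specialized to $m=n=0$ (where the vector-field sums are empty and only the $Q_{3}$-type terms remain). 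For $\mathcal L_{2}=-\Gamma(\cdot,\sqrt\mu)$, Lemma~\ref{commutator-nonlinear-x} directly gives $|(\omega_{t}\mathcal L_{2}f,\omega_{t}f)_{H^{3}_{x}L^{2}_{v}}|\le C_{4}\|f\|_{H^{3}_{x}L^{2}_{v}}|||\omega_{t}\sqrt\mu|||\,|||\omega_{t}f|||$, where $|||\omega_{t}\sqrt\mu|||$ is finite uniformly for $t\in[0,T]$ because $c_{0}$ small keeps $\omega_{t}\sqrt\mu$ Gaussian (even when $b=2$); by Young's inequality this is $\le\frac1{16}|||\omega_{t}f|||^{2}+C\|\omega_{t}f\|^{2}_{H^{3}_{x}L^{2}_{v}}$. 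Collecting these bounds, using $\langle v\rangle^{\gamma/2}\le1$ (as $\gamma<0$) and choosing $\epsilon_{1}$ small, gives the coercivity estimate, e.g.\ with $c_{*}=1-\epsilon_{1}-\frac18-\frac1{16}$.

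For the right-hand side, Lemma~\ref{commutator-nonlinear-x} gives $|(\omega_{t}\Gamma(f,f),\omega_{t}f)_{H^{3}_{x}L^{2}_{v}}|\le C_{4}\|f\|_{H^{3}_{x}L^{2}_{v}}|||\omega_{t}f|||^{2}\le C_{4}\|\omega_{t}f\|_{H^{3}_{x}L^{2}_{v}}|||\omega_{t}f|||^{2}$; the smallness hypothesis $\|f\|_{L^{\infty}([0,T];H^{3}_{x}L^{2}_{v}(\omega_{t}))}\le\delta_{0}$ with $\delta_{0}$ fixed so that $C_{4}\delta_{0}\le\frac12 c_{*}$ lets this term be absorbed into the coercive term, leaving $\frac12 c_{*}|||\omega_{t}f|||^{2}$ on the left --- this is the only place where the smallness of the solution is used. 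Inserting the coercivity bound and this estimate into the energy identity, bounding $\frac1{(1+t)^{2}}\ge\frac1{(1+T)^{2}}$, and multiplying by $2$, one obtains a differential inequality
\begin{align*}
\frac{d}{dt}\|\omega_{t}f\|^{2}_{H^{3}_{x}L^{2}_{v}}
+\frac{2c_{0}}{(1+T)^{2}}\|\langle v\rangle^{b/2}\omega_{t}f\|^{2}_{H^{3}_{x}L^{2}_{v}}
+c_{*}|||\omega_{t}f|||^{2}\le C\|\omega_{t}f\|^{2}_{H^{3}_{x}L^{2}_{v}}.
\end{align*}

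Finally, writing $\epsilon:=\|f_{0}\|_{H^{3}_{x}L^{2}_{v}(\omega_{0})}$ and integrating this inequality on $[0,t]$, one first drops the two nonnegative dissipation integrals and applies Grönwall to get $\|\omega_{t}f(t)\|^{2}_{H^{3}_{x}L^{2}_{v}}\le e^{Ct}\epsilon^{2}\le e^{CT}\epsilon^{2}$; substituting this back into the integrated inequality then controls the two dissipation integrals, and summing the three terms gives \eqref{m=0} with $B=\big((2+c_{*}^{-1})e^{CT}\big)^{1/2}$, which depends only on $\gamma,b,c_{0},T$. All these manipulations are justified because the solution is smooth for $t>0$ (cf.\ Remark~\ref{remark1.2}), or because they can be carried out on a regularized/Galerkin approximation and then passed to the limit.
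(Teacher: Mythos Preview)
Your proposal is correct and follows essentially the same approach as the paper's proof: the same weighted energy identity, coercivity of $\mathcal L_{1}$ via \eqref{L1} plus the commutator $[\omega_{t},\mathcal L_{1}]$ handled exactly as in Proposition~\ref{H-m-L-1} at $m=n=0$, the $\mathcal L_{2}$ and $\Gamma(f,f)$ terms controlled by the trilinear estimate, absorption of the nonlinear term by the smallness hypothesis, and closure by Grönwall. The only cosmetic difference is that for $\mathcal L_{2}$ the paper applies the $L^{2}_{v}$-level Lemma~\ref{commutator-nonlinear} to each $\partial^{\alpha}_{x}f$ (using $\partial^{\alpha}_{x}\mathcal L_{2}=\mathcal L_{2}\partial^{\alpha}_{x}$) rather than invoking Lemma~\ref{commutator-nonlinear-x} with $g=\sqrt\mu$, which yields the same bound.
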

\begin{proof}
     Since $f$ is the solution of Cauchy problem \eqref{1-2}
one can get that
     $$\frac{1}{2}\frac{d}{dt}\|\omega_{t} f(t)\|^{2}_{H^{3}_{x}L^{2}_{v}}+\frac{c_{0}}{(1+t)^{2}}\|\langle v\rangle^{\frac b2}\omega_{t} f(t)\|^{2}_{H^{3}_{x}L^{2}_{v}}+(\omega_{t}\mathcal L_{1}f, \omega_{t} f)_{H^{3}_{x}L^{2}_{v}}=(\omega_{t}\Gamma(f, f), \omega_{t} f)_{H^{3}_{x}L^{2}_{v}}-(\omega_{t}\mathcal L_{2}f, \omega_{t} f)_{H^{3}_{x}L^{2}_{v}}.$$
Since $\mathcal L_{2}f=-\Gamma(f, \sqrt\mu)$ and $c_{0}$ small, noting that $\partial^{\alpha}_{x}\mathcal L_{2}f=\mathcal L_{2}\partial^{\alpha}_{x}f$, then from Lemma \ref{commutator-nonlinear}, one has
\begin{align*}
     &\left|(\omega_{t}\mathcal L_{2}f, \omega_{t} f)_{H^{3}_{x}L^{2}_{v}}\right|\le\tilde C_{3}\sum_{|\alpha|\le3}\int_{\mathbb T^{3}_{x}}\|\partial^{\alpha}_{x}f\|_{L^{2}_{v}}\|\partial^{\alpha}_{x}\omega_{t} f\|_{\sigma}dx\le\tilde C_{3}\|f\|_{H^{3}_{x}L^{2}_{v}}|||\omega_{t} f|||.
\end{align*}
Since $\gamma<0$, then follows immediately from Lemma \ref{commutator-nonlinear}, Proposition \ref{H-m-L-1} and \eqref{L1} that
\begin{align*}
     &\frac{1}{2}\frac{d}{dt}\|\omega_{t} f(t)\|^{2}_{H^{3}_{x}L^{2}_{v}}+\frac{c_{0}}{(1+t)^{2}}\|\langle v\rangle^{\frac b2}\omega_{t} f(t)\|^{2}_{H^{3}_{x}L^{2}_{v}}+\frac34|||\omega_{t} f(t)|||^{2}\\
     &\le C_{4}\left\|f(t)\right\|_{H^{3}_{x}L^{2}_{v}}|||\omega_{t} f(t)|||+\tilde C_{3}\|f(t)\|_{H^{3}_{x}L^{2}_{v}}|||\omega_{t} f(t)|||.
\end{align*}
By using the Cauchy-Schwarz inequality and the fact
$$\|\omega_{t} f\|_{L^{\infty}([0, T]; H^3_xL^{2}_v)}\le\epsilon, \quad \forall \ 0<\epsilon<1,$$
we can deduce that
\begin{align*}
     &\frac{1}{2}\frac{d}{dt}\|\omega_{t} f(t)\|^{2}_{H^{3}_{x}L^{2}_{v}}+\frac{c_{0}}{(1+t)^{2}}\|\langle v\rangle^{\frac b2}\omega_{t} f(t)\|^{2}_{H^{3}_{x}L^{2}_{v}}+\frac12|||\omega_{t} f(t)|||^{2}
     \le 2(\tilde C_{3})^{2}\|f(t)\|^{2}_{H^{3}_{x}L^{2}_{v}},
\end{align*}
if taking $C_{4}\epsilon\le\frac18$. Integrating from~0~ to $t$, it follows that
\begin{align}\label{integration}
\begin{split}
     &\|\omega_{t} f(t)\|^{2}_{H^{3}_{x}L^{2}_{v}}+\frac{2c_{0}}{(1+T)^{2}}\int_{0}^{t}\|\langle v\rangle^{\frac b2}\omega_{\tau} f(\tau)\|^{2}_{H^{3}_{x}L^{2}_{v}}d\tau+\int_{0}^{t}|||\omega_{\tau} f(\tau)|||^{2}d\tau\\
     &\le\|\omega_{0} f_{0}\|^{2}_{H^{3}_{x}L^{2}_{v}}+4\tilde C_{3}\int_{0}^{t}\|\omega_{\tau} f(\tau)\|^{2}_{H^{3}_{x}L^{2}_{x}}d\tau, \quad 0<t\le T,
\end{split}
\end{align}
 by Gronwall inequality, we get for all $0<t\le T$
$$\|\omega_{t} f(t)\|^{2}_{H^{3}_{x}L^{2}_{v}}\le\left(1+4T\tilde C_{3}e^{4T\tilde C_{3}}\right)\|\omega_{0} f_{0}\|^{2}_{H^{3}_{x}L^{2}_{v}},$$
plugging it back into \eqref{integration}, one has for all $0<t\le T$
\begin{align*}
     &\|\omega_{t} f(t)\|^{2}_{H^{3}_{x}L^{2}_{v}}+\frac{c_{0}}{(1+T)^{2}}\int_{0}^{t}+\|\langle v\rangle^{\frac b2}\omega_{\tau} f(\tau)\|^{2}_{H^{3}_{x}L^{2}_{v}}d\tau+\int_{0}^{t}|||\omega_{\tau} f(\tau)|||^{2}d\tau\le B^{2}\epsilon^{2},
\end{align*}
if taking $B\ge 1+4T\tilde C_{3}e^{4T\tilde C_{3}}$.
\end{proof}

Now, we turn to establish the energy estimates for one-directional derivation.
\begin{lemma}\label{lemma4.1-1}
     Let $f$ be the smooth solution of \eqref{1-2} with $\|f\|_{L^{\infty}([0, T]; H^3_xL^{2}_v(\omega_{t}))}$ small enough. Then for all $\delta_{1}, \delta_{2}$ satisfies \eqref{delta12}, there exists a constant $\tilde B>0$ such that for $j=1, 2$ and $0<t\le T$
    \begin{align}\label{k=1}
        \begin{split}
            &\|\omega_{t} H_{\delta_{j}}f(t)\|^2_{H^{3}_{x}L^{2}_{v}}+\frac{c_{0}}{(1+T)^{2}}\int_{0}^{t}\|\langle v\rangle^{\frac b2}\omega_{\tau} H_{\delta_{j}}f(\tau)\|^{2}_{H^{3}_{x}L^{2}_{v}}d\tau+\int_{0}^{t}|||\omega_{\tau} H_{\delta_{j}}f(\tau)|||^{2}d\tau\le \tilde B^{2}\epsilon^{2}.
        \end{split}
    \end{align} 
\end{lemma}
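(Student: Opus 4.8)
The plan is to run a Grönwall/energy argument on $H_{\delta_j}f$ parallel to the one used for Lemma~\ref{lemma4.1}, but now tracking the extra commutator term $[H_{\delta_j},\, \partial_t+v\cdot\nabla_x] = \delta_j t^{\delta_j-1}A_{+,1}$ from \eqref{kehigher} (with $k=1$) together with the loss coming from $\partial_t\omega_t$ in \eqref{kehigher-1}. First I would apply $\omega_t H_{\delta_j}$ to the equation \eqref{1-2} and pair with $\omega_t H_{\delta_j}f$ in $H^3_xL^2_v$. This produces: the time derivative $\tfrac12\tfrac{d}{dt}\|\omega_t H_{\delta_j}f\|^2$; the good weight term $\tfrac{c_0}{(1+t)^2}\|\langle v\rangle^{b/2}\omega_t H_{\delta_j}f\|^2$ from $-\partial_t\omega_t$; the coercive term $(\omega_t H_{\delta_j}\mathcal L_1 f,\omega_t H_{\delta_j}f)$, which I split as $(\mathcal L_1 \omega_t H_{\delta_j}f, \omega_t H_{\delta_j}f) + ([\omega_t H_{\delta_j},\mathcal L_1]f,\omega_t H_{\delta_j}f)$ and estimate by \eqref{L1} (for the first) and Proposition~\ref{H-m-L-1} with $m=1$, $n=0$ or $m=0$, $n=1$ (for the second); the $\mathcal L_2$ term handled by Corollary~\ref{L2} with the same indices; the nonlinear term $(\omega_t H_{\delta_j}\Gamma(f,f),\omega_t H_{\delta_j}f)$ via Proposition~\ref{H-m-Gamma}; and finally the transport commutator term $\delta_j t^{\delta_j-1}(\omega_t A_{+,1}f,\omega_t H_{\delta_j}f)_{H^3_xL^2_v}$.

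The key steps in order: (i) derive the differential inequality, absorbing $\tfrac18|||\omega_t H_{\delta_j}f|||^2$ from Proposition~\ref{H-m-L-1} and a similar $\tfrac18$ from $\mathcal L_2$/$\Gamma$ into the $\tfrac34|||\omega_t H_{\delta_j}f|||^2$ coming from coercivity, using the smallness of $\|f\|_{H^3_xL^2_v(\omega_t)}\le\epsilon$ to make the nonlinear contribution small; (ii) control the commutator term $\delta_j t^{\delta_j-1}(\omega_t A_{+,1}f,\omega_t H_{\delta_j}f)$ — here I would write $\omega_t A_{+,1}f = A_{+,1}\omega_t f - [\omega_t,A_{+,1}]f = A_{+,1}\omega_t f - (\partial_1\omega_t) f$, bound $\|A_{+,1}\omega_t f\|$ in terms of $|||\omega_t f|||$ plus lower-order weighted norms (using that $A_{+,1}$ is a first-order operator with at most polynomial weight loss controlled by $\langle v\rangle^{b/2}\omega_t$), and then apply Cauchy--Schwarz to split it as $\eta|||\omega_t H_{\delta_j}f|||^2 + C_\eta (\delta_j t^{\delta_j-1})^2 |||\omega_t f|||^2$ or against the already-controlled quantities; (iii) integrate in time from $0$ to $t$, noting $\omega_0 H_{\delta_j}f(0)=0$ since $H_{\delta_j}$ carries a positive power of $t$, so there is no initial-data contribution and the right-hand side is driven entirely by $\int_0^t |||\omega_\tau f(\tau)|||^2\,d\tau$, $\int_0^t \|\langle v\rangle^{b/2}\omega_\tau f\|^2\,d\tau$ and $\int_0^t\|\omega_\tau f\|^2\,d\tau$, all bounded by $(B\epsilon)^2$ via Lemma~\ref{lemma4.1}; (iv) conclude by Grönwall that $\|\omega_t H_{\delta_j}f\|^2 + \text{(weight term)} + \int_0^t |||\omega_\tau H_{\delta_j}f|||^2\,d\tau \le \tilde B^2\epsilon^2$ with $\tilde B$ depending on $\gamma,b,c_0,T,\delta_j$.

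The main obstacle I expect is the transport commutator term $\delta_j t^{\delta_j-1}(\omega_t A_{+,1}f,\omega_t H_{\delta_j}f)$: because $\delta_j>1$ is not small and $t^{\delta_j-1}$ is merely bounded on $[0,T]$, one cannot simply absorb it into smallness, so the argument must route it through the already-established control of $f$ from Lemma~\ref{lemma4.1} (which is why the bound is stated in terms of $\epsilon$) rather than closing a self-contained estimate; getting the weight bookkeeping right here — ensuring $A_{+,1}\omega_t f$ only costs a $\langle v\rangle^{b/2}\omega_t$-type norm that Lemma~\ref{lemma4.1} already bounds — is the delicate point. A secondary technical nuisance is verifying that all the Proposition~\ref{H-m-L-1}, Corollary~\ref{L2} and Proposition~\ref{H-m-Gamma} sums collapse to finitely many ($m+n\le1$) terms with harmless constants, and that the factor $\sqrt{(l+1)!}$ etc.\ is simply $\sqrt{2}$ in this base case, so no Gevrey bookkeeping is needed at this stage — that is deferred to the induction on $m+n$ carried out later.
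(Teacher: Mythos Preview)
Your overall architecture is right, but step (ii) has a real gap that the paper's argument is specifically designed to close. You propose to bound $\|A_{+,1}\omega_t f\|_{H^3_xL^2_v}$ by $|||\omega_t f|||$ plus ``lower-order weighted norms controlled by $\langle v\rangle^{b/2}\omega_t$''. This does not work: by \eqref{upper bound1} the triple norm only controls $\|\langle v\rangle^{\gamma/2}A_{+,1}\omega_t f\|$, and since $\gamma<0$ this is strictly weaker than $\|A_{+,1}\omega_t f\|$; the $A_{+,1}$-derivative is not a lower-order term and is \emph{not} dominated by any $\|\langle v\rangle^{b/2}\omega_t f\|$-type quantity available from Lemma~\ref{lemma4.1}. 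Your proposed Cauchy--Schwarz split into $\eta|||\omega_t H_{\delta_j}f|||^2 + C_\eta(\delta_j t^{\delta_j-1})^2|||\omega_t f|||^2$ therefore cannot be justified.

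What the paper actually does is different in a structural way. After the crude Cauchy--Schwarz
\[
\delta_j t^{\delta_j-1}\bigl|(\omega_t A_{+,1}f,\omega_t H_{\delta_j}f)\bigr|
\le \varepsilon\|\omega_t H_{\delta_j}f\|^2 + \varepsilon^{-1}\delta_j^2 t^{2(\delta_j-1)}\|\omega_t A_{+,1}f\|^2,
\]
it applies the interpolation \eqref{interpolation11} to $\|\omega_t A_{+,1}f\|^2$, producing a $\langle v\rangle^{\gamma/2}$-weighted piece (handled by $|||\omega_t f|||^2$ via \eqref{upper bound1}) and a dangerous piece $\|\langle v\rangle^{b/2}\omega_t A_{+,1}f\|^2$. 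This last term is \emph{not} closed by Lemma~\ref{lemma4.1}; instead the paper uses the linear combination \eqref{linear combination-H}, writing $t^{\lambda}A_{+,1}$ as a combination of $H_{\delta_1}$ and $t^{\delta_1-\delta_2}H_{\delta_2}$, so that the dangerous piece becomes (up to constants) $\|\langle v\rangle^{b/2}\omega_t H_{\delta_1}f\|^2 + t^{2(\delta_1-\delta_2)}\|\langle v\rangle^{b/2}\omega_t H_{\delta_2}f\|^2$. With $\varepsilon$ small these are absorbed into the \emph{left-hand} good weight terms $\tfrac{c_0}{(1+t)^2}\|\langle v\rangle^{b/2}\omega_t H_{\delta_j}f\|^2$ for \emph{both} $j=1,2$ simultaneously. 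The estimates for $j=1$ and $j=2$ are therefore a genuinely \emph{coupled} system (the paper takes $\sup_j$ before applying Gr\"onwall), not two independent energy arguments as your outline suggests. This coupling is the whole reason two vector fields $H_{\delta_1},H_{\delta_2}$ with the specific relation \eqref{delta12} are introduced; it is essential for soft potentials and is the point your proposal misses.
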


\begin{proof}
From \eqref{1-2}, \eqref{kehigher-1} and \eqref{A-omega}, we have
\begin{align}\label{dt-H}
\begin{split}
     &\frac12\frac{d}{dt}\|\omega_{t} H_{\delta_{j}}f(t)\|^2_{H^3_xL^{2}_v}+\frac{c_{0}}{(1+t)^{2}}\|\langle v\rangle^{\frac b2}\omega_{t}H_{\delta_{j}}f(t)\|^{2}_{H^{3}_{x}L^{2}_{v}}+(\omega_{t} H_{\delta_{j}}\mathcal L_1f,  \ \omega_{t} H_{\delta_{j}}f)_{H^3_xL^{2}_v}\\
     &=-\delta_{j} t^{\delta_{j}-1}(\omega_{t} A_{+, 1}f, \omega_{t} H_{\delta_{j}} f)_{H^3_xL^{2}_v}-\delta_{j} t^{\delta_{j}-1}( \partial_{v_{1}}\omega_{t} f, \omega_{t} H_{\delta_{j}} f)_{H^3_xL^{2}_v}\\
     &\quad-(\omega_{t} H_{\delta_{j}} \mathcal L_2f,  \omega_{t} H_{\delta_{j}}f)_{H^3_xL^{2}_v}+(\omega_{t} H_{\delta_{j}}\Gamma(f, f),  \omega_{t} H_{\delta_{j}}f)_{H^3_xL^{2}_v}.
\end{split}
\end{align}
    Since $\gamma<0$, then follows immediately from Proposition \ref{H-m-L-1} that
\begin{align*}
     (\omega_{t} H_{\delta_{j}}\mathcal L_{1}f,\ \omega_{t} H_{\delta_{j}}f)_{H^{3}_{x}L^{2}_{v}}&\ge\frac12|||\omega_{t} H_{\delta_{j}}f|||^{2}-C_{6}\left\|\omega_{t} H_{\delta_{j}}f\right\|^{2}_{H^{3}_{x}L^{2}_{v}}-(C_{6})^{2}t^{2\delta_{j}}|||\omega_{t} f|||^{2}.
\end{align*}
By using the Cauchy-Schwarz inequality and the fact $0< b\le2$, we have
\begin{align*}
     \delta_{j} t^{\delta_{j}-1}\left|( \partial_{v_{1}}\omega_{t} f, \omega_{t} H_{\delta_{j}} f)_{H^3_xL^{2}_v}\right|
     &\le \frac{c_{0}b}{1+t}\delta_{j} t^{\delta_{j}-1}\|\langle v\rangle^{\frac b2}\omega_{t} f\|_{H^{3}_{x}L^{2}_{v}}\|\langle v\rangle^{\frac b2}\omega_{t} H_{\delta_{j}}f\|_{H^{3}_{x}L^{2}_{v}}.
\end{align*}
Applying Proposition \ref{H-m-Gamma} with $n=0$, it follows that
\begin{align*}
     |(\omega_{t} H_{\delta_{j}}\Gamma(f, f),  \omega_{t} H_{\delta_{j}}f)_{H^3_xL^{2}_v}|
     &\le C_{4}\|f\|_{H^{3}_{x}L^{2}_{v}}|||\omega_{t} H_{\delta_{j}}f|||^{2}+C_{4}\|H_{\delta_{j}}f\|_{H^{3}_{x}L^{2}_{v}}|||\omega_{t} f|||\cdot|||\omega_{t} H_{\delta_{j}}f|||.
\end{align*}
By using Corollary \ref{L2} with $n=0$, it follows that
\begin{align*}
     &|(\omega_{t} H_{\delta_{j}}\mathcal L_{2}f,\ \omega_{t} H_{\delta_{j}}f)_{H^{3}_{x}L^{2}_{v}}|\le C_{5}\sqrt{C_{0}}t^{\delta_{j}}\|f\|_{H^{3}_{x}L^{2}_{v}}|||\omega_{t} H_{\delta_{j}}f|||+C_{5}\|H_{\delta_{j}}f\|_{H^{3}_{x}L^{2}_{v}}|||\omega_{t} H_{\delta_{j}}f|||.
\end{align*}  
For the first term on the right-hand side of \eqref{dt-H}, by using the Cauchy-Schwarz inequality, we have
\begin{align*}
     \left|\delta_{j} t^{\delta_{j}-1}(\omega_{t} A_{+, 1}f, \omega_{t} H_{\delta_{j}} f)_{H^3_xL^{2}_v}\right|&\le\delta_{j} t^{\delta_{j}-1}\left\|\omega_{t} A_{+, 1}f\right\|_{H^{3}_{x}L^{2}_{v}}\left\|\omega_{t} H_{\delta_{j}}f\right\|_{H^{3}_{x}L^{2}_{v}}\\
     &\le\varepsilon\left\|\omega_{t} H_{\delta_{j}}f\right\|_{H^{3}_{x}L^{2}_{v}}^{2}+\varepsilon^{-1}\delta_{j}^{2} t^{2(\delta_{j}-1)}\left\|\omega_{t} A_{+, 1}f\right\|_{H^{3}_{x}L^{2}_{v}}^{2},
\end{align*}
since $0<\frac{b}{b-\gamma}, \frac{-\gamma}{b-\gamma}<1$, then from H$\rm\ddot o$lder's inequality we can get the following interpolation
\begin{align}\label{interpolation11}
     \left\|g\right\|_{L^{2}_{v}}^{2}&\le\varepsilon_{1}\left\|\langle\cdot\rangle^{\frac{b}{2}}g\right\|_{L^{2}_{v}}^{2}+\varepsilon_{1}^{-(1-\frac{b}{b-\gamma})\cdot\frac{b-\gamma}{b}}\left\|\langle\cdot\rangle^{\frac{\gamma}{2}}g\right\|_{L^{2}_{v}}^{2}, \quad \forall \ \varepsilon_{1}>0,
\end{align}
applying \eqref{upper bound1}, the interpolation \eqref{interpolation11} with $g=\omega_{t}A_{+, 1}f$ and $\varepsilon_{1}=\varepsilon^{2}t^{-2(\delta_{j}-1)}t^{2\delta_{1}}$ leads
\begin{align*}
     &\varepsilon^{-1}\delta_{j}^{2} t^{2(\delta_{j}-1)}\left\|\omega_{t} A_{+, 1}f \right\|_{H^{3}_{x}L^{2}_{v}}^{2}\le\varepsilon\delta_{j}^{2}t^{2\delta_{1}}\left\|\langle v\rangle^{\frac{b}{2}}\omega_{t}A_{+, 1}f\right\|_{H^{3}_{x}L^{2}_{v}}^{2}+\frac{\varepsilon^{\frac{2(b-\gamma)}{b}(1-\frac{b}{b-\gamma})-1}\delta_{j}^{2} t^{\theta}}{C_{1}}|||\omega_{t}f|||^{2}.
\end{align*}
here 
$$\theta=\left[\delta_{j}-1-\delta_{1}\left(1-\frac{b}{b-\gamma}\right)\right]\frac{2(b-\gamma)}{b}>\left[\delta_{2}-1-\delta_{1}\left(1-\frac{b}{b-\gamma}\right)\right]\frac{2(b-\gamma)}{b}>0.$$ 
By using \eqref{linear combination-H}, one has
\begin{align*}
     \varepsilon\delta_{j}^{2}t^{2\delta_{1}}\left\|\langle v\rangle^{\frac{b}{2}}\omega_{t}A_{+, 1}f\right\|_{H^{3}_{x}L^{2}_{v}}^{2}&\le2\varepsilon\delta_{j}^{2}\left(\frac{\delta_1+ 1}{\delta_2-\delta_1}\right)^{2}\|\langle v\rangle^{\frac{b}{2}}\omega_{t}H_{\delta_{1}}f\|^{2}_{H^{3}_{x}L^{2}_{v}}\\
     &\quad+2\varepsilon\delta_{j}^{2}t^{2(\delta_{1}-\delta_{2})}\left(\frac{\delta_2+ 1}{\delta_2-\delta_1}\right)^{2}\|\langle v\rangle^{\frac{b}{2}}\omega_{t}H_{\delta_{2}}f\|_{H^{3}_{x}L^{2}_{v}}^{2}.
\end{align*}
The above inequalities yield that for all $0<\varepsilon<1$
\begin{align}\label{111111}
\begin{split}
     &\left|\delta_{j} t^{\delta_{j}-1}(\omega_{t} A_{+, 1}f, \omega_{t} H_{\delta_{j}} f)_{H^3_xL^{2}_v}\right|\le\varepsilon\left\|\omega_{t} H_{\delta_{j}}f\right\|_{H^{3}_{x}L^{2}_{v}}^{2}+2\varepsilon\delta_{j}^{2}\left(\frac{\delta_1+ 1}{\delta_2-\delta_1}\right)^{2}\|\langle v\rangle^{\frac{b}{2}}\omega_{t}H_{\delta_{1}}f\|^{2}_{H^{3}_{x}L^{2}_{v}}\\
     &\qquad+2\varepsilon\delta_{j}^{2}t^{2(\delta_{1}-\delta_{2})}\left(\frac{\delta_2+ 1}{\delta_2-\delta_1}\right)^{2}\|\langle v\rangle^{\frac{b}{2}}\omega_{t}H_{\delta_{2}}f\|_{H^{3}_{x}L^{2}_{v}}^{2}+\frac{\varepsilon^{\frac{2(b-\gamma)}{b}(1-\frac{b}{b-\gamma})-1}\delta_{j}^{2} t^{\theta}}{C_{1}}|||\omega_{t}f|||^{2}.
\end{split}       
\end{align}
Combining these inequalities, it follows that 
\begin{align*}
     &\frac{d}{dt}\|\omega_{t} H_{\delta_{j}}f(t)\|^2_{H^3_xL^{2}_v}+\frac{2c_{0}}{(1+t)^{2}}\|\langle v\rangle^{\frac b2}\omega_{t}H_{\delta_{j}}f(t)\|^{2}_{H^{3}_{x}L^{2}_{v}}+|||\omega_{t} H_{\delta_{j}}f|||^{2}\\
     &\le2C_{6}\left\|\omega_{t} H_{\delta_{j}}f\right\|^{2}_{H^{3}_{x}L^{2}_{v}}+(C_{6})^{2}t^{2\delta_{j}}|||\omega_{t} f|||^{2}+c_{0}b^{2}\delta^{2}_{j} t^{2(\delta_{j}-1)}\|\langle v\rangle^{\frac b2}\omega_{t} f\|^{2}_{H^{3}_{x}L^{2}_{v}}+2C_{4}\|f\|_{H^{3}_{x}L^{2}_{v}}|||\omega_{t} H_{\delta_{j}}f|||^{2}\\
     &\quad+2C_{4}\|H_{\delta_{j}}f\|_{H^{3}_{x}L^{2}_{v}}|||\omega_{t} f|||\cdot|||\omega_{t} H_{\delta_{j}}f|||+2C_{5}\sqrt{C_{0}}t^{\delta_{j}}\|f\|_{H^{3}_{x}L^{2}_{v}}|||\omega_{t} H_{\delta_{j}}f|||+2C_{5}\|H_{\delta_{j}}f\|_{H^{3}_{x}L^{2}_{v}}|||\omega_{t} H_{\delta_{j}}f|||\\
     &\quad+\left\|\omega_{t} H_{\delta_{j}}f\right\|_{H^{3}_{x}L^{2}_{v}}^{2}+\frac{c_{0}}{2(1+T)^{2}}\|\langle v\rangle^{\frac{b}{2}}\omega_{t}H_{\delta_{1}}f\|^{2}_{H^{3}_{x}L^{2}_{v}}+\frac{c_{0}}{2(1+T)^{2}}\|\langle v\rangle^{\frac{b}{2}}\omega_{t}H_{\delta_{2}}f\|_{H^{3}_{x}L^{2}_{v}}^{2}\\
     &\quad+\frac{\varepsilon^{\frac{2(b-\gamma)}{b}(1-\frac{b}{b-\gamma})-1}\delta_{j}^{2} t^{\theta}}{C_{1}}|||\omega_{t}f|||^{2},
\end{align*}
if we choose $\varepsilon$ small enough such that 
$$2\varepsilon\delta_{1}^{2}(T+1)^{2(\delta_{1}-\delta_{2})}\left(\frac{\delta_2+ 1}{\delta_2-\delta_1}\right)^{2}\le\frac{c_{0}}{2(1+T)^{2}}.$$
For all $0 <t\le T$, integrating from 0 to $t$, then by using the Cauchy-Schwarz inequality and \eqref{m=0} yields that for $j=1, 2$
\begin{align*}
     &\|\omega_{t} H_{\delta_{j}}f(t)\|^2_{H^3_xL^{2}_v}+\frac{c_{0}}{(1+T)^{2}}\int_{0}^{t}\|\langle v\rangle^{\frac b2}\omega_{\tau}H_{\delta_{j}}f(\tau)\|^{2}_{H^{3}_{x}L^{2}_{v}}d\tau+\int_{0}^{t}|||\omega_{\tau} H_{\delta_{j}}f(\tau)|||^{2}d\tau\\
     &\le\left(2C_{6}+(4C_{5})^{2}+1\right)\int_{0}^{t}\sup_{j}\left\|\omega_{\tau} H_{\delta_{j}}f(\tau)\right\|^{2}_{H^{3}_{x}L^{2}_{v}}d\tau+\frac12\sup_{j}\|\omega_{t}H_{\delta_{j}}f\|_{L^{\infty}(]0, T]; H^{3}_{x}L^{2}_{v})}^{2}\\
     &\quad+\left(2C_{4}B\epsilon+8(C_{4}B\epsilon)^{2}+\frac14\right)\int_{0}^{t}\sup_{j}|||\omega_{\tau} H_{\delta_{j}}f(\tau)|||^{2}d\tau+C_{7}(B\epsilon)^{2},
\end{align*}
here the constant $C_{7}$ depends on $C_{0}-C_{6}, \ c_{0}, b, \gamma, \delta_{1}, \delta_{2}, \ T$ and we use the fact 
\begin{align}\label{t=0}
      \left\|\omega_{t} H_{\delta_{j}}^{k}f(t)\right\|_{H^3_xL^{2}_v}\Big|_{t=0}=0, \quad \forall \ k\in\mathbb N_{+}, \ j=1, 2.
\end{align}
Taking $\epsilon$ small enough such that 
$$2C_{4}B\epsilon+8(C_{4}B\epsilon)^{2}\le\frac14,$$
we can deduce that for all $t\in]0, T]$ and $j=1, 2$
\begin{align*}
\begin{split}
     &\|\omega_{t} H_{\delta_{j}}f(t)\|^2_{H^3_xL^{2}_v}+\frac{c_{0}}{(1+T)^{2}}\int_{0}^{t}\|\langle v\rangle^{\frac b2}\omega_{\tau}H_{\delta_{j}}f(\tau)\|^{2}_{H^{3}_{x}L^{2}_{v}}d\tau+\frac12\int_{0}^{t}|||\omega_{\tau} H_{\delta_{j}}f(\tau)|||^{2}d\tau\\
     &\le\left(2C_{6}+(4C_{5})^{2}+1\right)\int_{0}^{t}\sup_{j}\left\|\omega_{\tau} H_{\delta_{j}}f(\tau)\right\|^{2}_{H^{3}_{x}L^{2}_{v}}d\tau+\frac12\sup_{j}\|\omega_{t}H_{\delta_{j}}f\|_{L^{\infty}(]0, T]; H^{3}_{x}L^{2}_{v})}^{2}+C_{7}(B\epsilon)^{2},
\end{split}
\end{align*}
this implies that for all $t\in]0, T]$ and $j=1, 2$
\begin{align*}
\begin{split}
     &\|\omega_{t} H_{\delta_{j}}f(t)\|^2_{H^3_xL^{2}_v}+\frac{c_{0}}{(1+T)^{2}}\int_{0}^{t}\|\langle v\rangle^{\frac b2}\omega_{\tau}H_{\delta_{j}}f(\tau)\|^{2}_{H^{3}_{x}L^{2}_{v}}d\tau+\int_{0}^{t}|||\omega_{\tau} H_{\delta_{j}}f(\tau)|||^{2}d\tau\\
     &\le2\left(2C_{6}+(4C_{5})^{2}+1\right)\int_{0}^{t}\sup_{j}\left\|\omega_{\tau} H_{\delta_{j}}f(\tau)\right\|^{2}_{H^{3}_{x}L^{2}_{v}}d\tau+2C_{7}(B\epsilon)^{2}.
\end{split}
\end{align*}
Finally, by using Gronwall inequality, we have for all $0<t\le T$ and $j=1, 2$
\begin{align*}
\begin{split}
     &\|\omega_{t} H_{\delta_{j}}f(t)\|^2_{H^3_xL^{2}_v}+\frac{c_{0}}{(1+T)^{2}}\int_{0}^{t}\|\langle v\rangle^{\frac b2}\omega_{\tau}H_{\delta_{j}}f(\tau)\|^{2}_{H^{3}_{x}L^{2}_{v}}d\tau+\int_{0}^{t}|||\omega_{\tau} H_{\delta_{j}}f(\tau)|||^{2}d\tau\\
     &\le2C_{7}\left(1+2\left(2C_{6}+(4C_{5})^{2}+1\right)e^{2T\left(2C_{6}+(4C_{5})^{2}+1\right)}\right)^{2}\left(B\epsilon\right)^{2}=(\tilde B\epsilon)^{2}.
\end{split}
\end{align*}
\end{proof}
\begin{remark}
Remark that the affirmation of \eqref{t=0} is somehow too simplistic, in fact by using Remark \ref{remark1.2},  the solution belongs to $C^{\infty}([t_0, \infty[; \cap_{s\ge0}H^{\infty, s}_{x, v}(\mathbb T^{3}_{x}\times\mathbb R^{3}_{v}))$ for any $t_0>0$. So we can study the Gevrey smoothness of solution start from initial datum $f(t_0)\in H^{\infty}_{x, v}(\mathbb T^{3}_{x}\times\mathbb R^{3}_{v})$  at $t_0>0$, 
and establish the \`a priori estimate on $[t_0, T]$, but uniformly with respect to parameter $t_0$ (i. e. all constants in the  estimates are independents of small $t_0>0$ ), then in the definition of $H_\delta$, replace $t$ by $t-t_0$, in this case, \eqref{t=0} is true in the following sense, $\forall\ t_0>0$,
$$ 
\lim_{t\, \to \, t_0}\left\|\omega_{t-t_0} H_{\delta_{j}}^{k}f(t)\right\|_{H^3_xL^{2}_v}\le C_k\lim_{t\, \to \, t_0}(t-t_0)^{\delta_j}\left\|\omega_{t-t_0} f(t)\right\|_{H^{k+3}} =0, \quad \forall \ k\ge 1, \ j=1, 2.
$$
\end{remark}

\section{Energy estimates for multi-directional derivations}\label{s5}
This section establishes the energy estimates for multi-directional derivations.
\begin{prop}\label{prop 5.1}
    Assume that $-3<\gamma<0$. Let $f$ be the smooth solution of Cauchy problem \eqref{1-2} with $\|f\|_{L^{\infty}([0, T]; H^3_xL^{2}_v(\omega_{t}))}$ small enough. Then for all $\delta_{1}, \delta_{2}$ satisfy \eqref{delta12}, there exists a constant $A>0$, depends on $\gamma, \ b, \ c_{0}, \ \delta_{1},\ \delta_{2}, \ T$ and $C_{0}-C_{6}$, such that for all $k\ge1$
    \begin{align}\label{4-1-mn}
    \begin{split}
       &\sup_{(m, n)\in E_{k}}\frac{1}{\left((m-2)!(n-2)!\right)^{2\sigma}}\|\omega_{t} H^{m}_{\delta_{1}}H^{n}_{\delta_{2}}f(t)\|^2_{H^3_xL^{2}_v}\\
       &\quad+\frac{c_{0}}{(1+T)^{2}}\sup_{(m, n)\in E_{k}}\frac{1}{\left((m-2)!(n-2)!\right)^{2\sigma}}\int_0^t\|\langle v\rangle^{\frac b2}\omega_{\tau} H^{m}_{\delta_{1}}H^{n}_{\delta_{2}}f(\tau)\|^2_{H^3_xL^{2}_{v}} d\tau\\
       &\quad+\sup_{(m, n)\in E_{k}}\frac{1}{\left((m-2)!(n-2)!\right)^{2\sigma}}\int_0^t|||\omega_{\tau} H^{m}_{\delta_{1}}H^{n}_{\delta_{2}}f(\tau)|||^2 d\tau\le A^{2\sigma(k-\frac 12)}, \quad \forall \ 0<t\le T,
    \end{split}
    \end{align}
    here $E_{k}=\{(m, n)| \ m,n\in\mathbb N, \ 1\le m+n=k\}$.
\end{prop}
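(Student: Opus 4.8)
The plan is to prove \eqref{4-1-mn} by induction on $k$, using the induction hypothesis to control all lower-order terms $\|\omega_t H^{m-l}_{\delta_1}H^{n-p}_{\delta_2}f\|$ with $l+p\ge 1$ that arise from the commutators. First I would fix $(m,n)\in E_k$ and write down the basic energy identity: applying $\omega_t H^m_{\delta_1}H^n_{\delta_2}$ to \eqref{1-2}, taking the $H^3_xL^2_v$ inner product with $F_{m,n}=\omega_t H^m_{\delta_1}H^n_{\delta_2}f$, and using \eqref{kehigher-1} to handle the transport commutator together with \eqref{A-omega} for the weight. This yields
\begin{align*}
&\frac12\frac{d}{dt}\|F_{m,n}(t)\|^2_{H^3_xL^2_v}+\frac{c_0}{(1+t)^2}\|\langle v\rangle^{\frac b2}F_{m,n}(t)\|^2_{H^3_xL^2_v}+(\omega_t H^m_{\delta_1}H^n_{\delta_2}\mathcal L_1 f,\, F_{m,n})_{H^3_xL^2_v}\\
&=-([\omega_t H^m_{\delta_1}H^n_{\delta_2},\mathcal L_1]f,\, F_{m,n})_{H^3_xL^2_v}-(\omega_t H^m_{\delta_1}H^n_{\delta_2}\mathcal L_2 f,\, F_{m,n})_{H^3_xL^2_v}\\
&\quad+(\omega_t H^m_{\delta_1}H^n_{\delta_2}\Gamma(f,f),\, F_{m,n})_{H^3_xL^2_v}+\text{(transport/weight commutator terms)}.
\end{align*}
The leading term is handled by \eqref{L1}, which gives coercivity $(1-\epsilon_1)|||F_{m,n}|||^2$ up to a lower-order weighted $L^2$ term, and the loss $C_6\|\langle v\rangle^{\gamma/2}F_{m,n}\|^2$ coming from Proposition \ref{H-m-L-1} is absorbed via the interpolation $\|g\|^2_{L^2}\le\varepsilon_1\|\langle\cdot\rangle^{b/2}g\|^2_{L^2}+\varepsilon_1^{-\theta'}\|\langle\cdot\rangle^{\gamma/2}g\|^2_{L^2}$ against the weighted dissipation term $\frac{c_0}{(1+t)^2}\|\langle v\rangle^{b/2}F_{m,n}\|^2$, exactly as in the one-directional case.

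Next I would estimate each remaining term using the bounds already proved: Proposition \ref{H-m-L-1} for the commutator with $\mathcal L_1$, Corollary \ref{L2} for $\mathcal L_2$, and Proposition \ref{H-m-Gamma} for the nonlinear term $\Gamma(f,f)$. In $\Gamma(f,f)$ one splits off the two ``extreme'' terms $(l,p)=(0,0)$ and $(l,p)=(m,n)$: the first gives $C_4\|f\|_{H^3_xL^2_v}|||F_{m,n}|||^2$ which is absorbed by smallness of $\epsilon$, and the second gives $C_4\|H^m_{\delta_1}H^n_{\delta_2}f\|_{H^3_xL^2_v}|||F_{m,n}|||\cdot|||\omega_t f|||$ where $\|H^m_{\delta_1}H^n_{\delta_2}f\|$ (without weight) is bounded by $\|F_{m,n}\|$; all other terms have $l+p\ge 1$ with both $l<m$ or $p<n$, hence fall under the induction hypothesis. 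The transport commutator terms, via \eqref{linear combination-H}, produce $\|\langle v\rangle^{b/2}\omega_t H_{\delta_1}(\cdots)\|$ and $\|\langle v\rangle^{b/2}\omega_t H_{\delta_2}(\cdots)\|$ contributions which, after using $\delta_1>\delta_2>2\sigma$ and \eqref{delta12}, carry a positive power $t^\theta$ and can be absorbed into the weighted dissipation (choosing $\varepsilon$ small) plus a term controlled inductively. After collecting, integrating from $0$ to $t$ (using \eqref{t=0}, i.e. the vanishing at $t=0$ of all $H^k_{\delta_j}$-derivatives in the sense of the Remark), dividing by $((m-2)!(n-2)!)^{2\sigma}$, and taking the supremum over $E_k$, I would get an inequality of Gronwall type where the right-hand side involves $A^{2\sigma(j-\frac12)}$ for $j<k$ (from the induction hypothesis) times combinatorial factors $\binom{m}{l}\binom{n}{p}\sqrt{(l+p+\text{const})!}\,t^{\delta_1 l+\delta_2 p}\big/((m-2)!(n-2)!)^\sigma$.

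The main obstacle, and the heart of the argument, is the combinatorial bookkeeping: one must show that the sums
$$\sum_{l,p}\binom{m}{l}\binom{n}{p}\sqrt{(l+p+C)!}\,\frac{((m-l-2)!(n-p-2)!)^\sigma}{((m-2)!(n-2)!)^\sigma}\,A^{\sigma(k-l-p-\frac12)}$$
(and the variants with one index shifted by $1$ from the $A_{+,1}$-factors in \eqref{kehigher-1} and in $I_3$–$I_9$) are bounded by $C\,A^{\sigma(k-\frac12)-\epsilon}$ for a gain $\epsilon>0$, provided $A$ is chosen large. This uses $\sigma\ge\frac{b-\gamma}{2b}$ precisely so that $\sqrt{(l+p)!}$ is dominated by $((l+p)!)^\sigma$ up to geometric factors (since $2\sigma\ge 1$), combined with the elementary estimates $\binom{m}{l}(m-l-2)!^\sigma\lesssim (m-2)!^\sigma\cdot(\text{geometric in }l)$ and the convergence of $\sum_l r^l\sqrt{l!}/A^{\sigma l}$-type series for $A$ large; the powers of $t$ are bounded on $[0,T]$ and the shift by constants (the ``$+3$'' in Corollary \ref{L2}, the ``$+1$'' in Proposition \ref{H-m-L-1}) only costs harmless polynomial factors absorbed into the geometric series. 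One also needs Lemma \ref{lemma4.1} and Lemma \ref{lemma4.1-1} to anchor the induction at $k=1$ (the cases $(m,n)=(1,0),(0,1)$, noting $(m-2)!=(-1)!$ is interpreted so that the $k=1,2$ base cases reduce to the already-established estimates). Once the combinatorial lemma is in place, the Gronwall step and the closing of the induction are routine.
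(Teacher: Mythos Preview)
Your proposal is correct and follows essentially the same route as the paper: induction on $k$, the energy identity for $F_{m,n}$, coercivity via \eqref{L1}, commutator bounds from Proposition~\ref{H-m-L-1}, Corollary~\ref{L2}, Proposition~\ref{H-m-Gamma}, the interpolation \eqref{interpolation11} combined with \eqref{linear combination-H} for the transport commutator (this is the content of the paper's Lemma~\ref{J1-J2}), combinatorial control of the double sums (the paper's Lemma~\ref{R1-3}), and Gronwall to close. One small correction: the term $C_6\|\langle v\rangle^{\gamma/2}F_{m,n}\|^2$ from Proposition~\ref{H-m-L-1} does not need interpolation against the weighted dissipation---since $\gamma<0$ it is simply bounded by $\|F_{m,n}\|^2$ and absorbed by Gronwall; the interpolation is reserved for the transport commutator terms $\mathcal J_1,\mathcal J_2$, where the factor $m^{\frac{b-\gamma}{b}}$ that appears is precisely what forces $\sigma\ge\frac{b-\gamma}{2b}$.
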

\begin{proof}
We prove this proposition by induction on the index $m+n=k$. For the case of $m+n=k=1$, it has already been shown in \eqref{k=1}. By convention, we denote $k!=1$ if $k\le 0$ and $F_{m, n}=\omega_{t} H^{m}_{\delta_{1}}H^{n}_{\delta_{2}}f$. Assume $k\ge2$, for all $1\le m+n=j\le k-1$, 
\begin{align}\label{hypothesis}
\begin{split}
     &\sup_{(m, n)\in E_{j}}\frac{1}{\left((m-2)!(n-2)!\right)^{2\sigma}}\|F_{m, n}(t)\|^2_{H^3_xL^{2}_v}+\sup_{(m, n)\in E_{j}}\frac{1}{\left((m-2)!(n-2)!\right)^{2\sigma}}\int_0^t|||F_{m, n}(\tau)|||^2 d\tau\\
       &\quad+\frac{c_{0}}{(1+T)^{2}}\sup_{(m, n)\in E_{j}}\frac{1}{\left((m-2)!(n-2)!\right)^{2\sigma}}\int_0^t\|\langle v\rangle^{\frac b2}F_{m, n}(\tau)\|^2_{H^3_xL^{2}_{v}} d\tau\le A^{2\sigma(j-\frac 12)}, \quad \forall \ 0<t\le T.
\end{split}
\end{align}
We will show that \eqref{hypothesis} holds for all $m, n\in\mathbb N$ with $m+n=k$.  From \eqref{1-2}, we can obtain
\begin{align*}
     &\frac12\frac{d}{dt}\left\|F_{m, n}(t)\right\|^{2}_{H_{x}^{3}L^{2}_{v}}+\frac{c_{0}}{(1+t)^{2}}\|\langle v\rangle^{\frac b2}F_{m, n}(t)\|^{2}_{H^{3}_{x}L^{2}_{v}}+\left(\omega_{t} H^{m}_{\delta_{1}}H^{n}_{\delta_{2}}\mathcal L_{1}f, \ F_{m, n}\right)_{H^{3}_{x}L^{2}_{v}}\\
     &=-\left(\omega_{t}\left[H^{m}_{\delta_{1}}H^{n}_{\delta_{2}}, \partial_t+v\ \cdot\ \partial_x\right]f, \ F_{m, n}\right)_{H^{3}_{x}L^{2}_{v}}-\left(\omega_{t} H^{m}_{\delta_{1}}H^{n}_{\delta_{2}}\mathcal L_{2}f, \ F_{m, n}\right)_{H^{3}_{x}L^{2}_{v}}\\
     &\quad+\left(\omega_{t} H^{m}_{\delta_{1}}H^{n}_{\delta_{2}}\Gamma(f, f), \ F_{m, n}\right)_{H^{3}_{x}L^{2}_{v}}.
\end{align*}
If $m=0, n=k$ or $n=0, m=k$, then the commutator in the above formula has been given in \eqref{kehigher}. For simplicity of the presentation, we consider the case of $m, n\ge1$ with $m+n=k$. The proof is similar and relatively easy in the case of $m=0, n=k$ and $n=0, m=k$.

Applying \eqref{commutator-mn}, we can obtain that
\begin{align*}
     &\left|\left(\omega_{t} \left[H^{m}_{\delta_{1}}H^{n}_{\delta_{2}}, \partial_t+v\ \cdot\ \partial_x\right]f, \ F_{m, n}\right)_{H^{3}_{x}L^{2}_{v}}\right|\\
     &\le\delta_{1}mt^{\delta_{1}-1}\left|\left(A_{+, 1}F_{m-1, n}, \ F_{m, n}\right)_{H^{3}_{x}L^{2}_{v}}\right|+\delta_{2}nt^{\delta_{2}-1}\left|\left(A_{+, 1}F_{m, n-1}, \ F_{m, n}\right)_{H^{3}_{x}L^{2}_{v}}\right|=\mathcal J_{1}(t)+\mathcal J_{2}(t).
\end{align*}
Since $m, n\ge1$, from Proposition \ref{H-m-Gamma}, we can obtain that
\begin{align*}
     &\left|\left(\omega_{t} H^{m}_{\delta_{1}}H^{n}_{\delta_{2}}\Gamma(f, f),\ F_{m, n}\right)_{H^{3}_{x}L^{2}_{v}}\right|\le C_{4}\|f(t)\|_{H^{3}_{x}L^{2}_{v}}|||F_{m, n}(t)|||^{2}\\
     &\qquad+C_{4}\|F_{m, n}(t)\|_{H^{3}_{x}L^{2}_{v}}|||\omega_{t} f(t)|||\cdot|||F_{m, n}(t)|||+\mathcal R_{1}(t),
\end{align*}
with
\begin{align}\label{R-1}
\begin{split}
     \mathcal R_{1}(t)&=C_{4}\sum_{l=1}^{m}\sum_{p=0}^{n-1}\binom{m}{l}\binom{n}{p}\|F_{l, p}(t)\|_{H^{3}_{x}L^{2}_{v}}|||F_{m-l, n-p}(t)|||\cdot|||F_{m, n}(t)|||\\
     &\quad+C_{4}\sum_{l=1}^{m-1}\binom{m}{l}\|H^{l}_{\delta_{1}}H^{n}_{\delta_{2}}f(t)\|_{H^{3}_{x}L^{2}_{v}}|||F_{m-l, 0}(t)|||\cdot|||F_{m, n}(t)|||\\
     &\quad+C_{4}\sum_{p=1}^{n}\binom{n}{p}\|H^{p}_{\delta_{2}}f(t)\|_{H^{3}_{x}L^{2}_{v}}|||F_{m, n-p}(t)|||\cdot|||F_{m, n}(t)|||.
\end{split}
\end{align}
Since $m, n\ge1$, from Corollary \ref{L2}, by using Cauchy-Schwarz inequality, we have
\begin{align*}
     &|(\omega_{t}H^{m}_{\delta_{1}}H^{n}_{\delta_{2}}\mathcal L_{2}f,\ F_{m, n})_{H^{3}_{x}L^{2}_{v}}|
     \le(C_{5})^{2}\|F_{m, n}(t)\|^{2}_{H^{3}_{x}L^{2}_{v}}+\frac{1}{4}|||F_{m, n}(t)|||^{2}+\mathcal R_{2}(t),
\end{align*}
with
\begin{align}\label{R-2}
\begin{split}
     &\mathcal R_{2}(t)=C_{5}\sum_{p=0}^{n-1}\binom{n}{p}\left(\sqrt{C_{0}}t^{\delta_{2}}\right)^{n-p}\sqrt{(n-p)!}\|F_{m, p}(t)\|_{H^{3}_{x}L^{2}_{v}}|||F_{m, n}(t)|||\\
     &\ +C_{5}\sum_{l=0}^{m-1}\sum_{p=0}^{n}\binom{m}{l}\binom{n}{p}\left(\sqrt{C_{0}}t^{\delta_{1}}\right)^{m-l}\left(\sqrt{C_{0}}t^{\delta_{2}}\right)^{n-p}\sqrt{(m-l+n-p+3)!}\|F_{l, p}(t)\|_{H^{3}_{x}L^{2}_{v}}|||F_{m, n}(t)|||.
\end{split}
\end{align}
Since $\gamma<0$, from Lemma \ref{L1} and Proposition \ref{H-m-L-1}, we can get that
\begin{align*}
     \left(\omega_{t} H^{m}_{\delta_{1}}H^{n}_{\delta_{2}}\mathcal L_{1}f,\ F_{m, n}\right)_{H^{3}_{x}L^{2}_{v}}
     &\ge \left(\mathcal L_{1}F_{m, n},\ F_{m, n}\right)_{H^{3}_{x}L^{2}_{v}}-\left|\left([\omega_{t} H^{m}_{\delta_{1}}H^{n}_{\delta_{2}}, \mathcal L_{1}]f,\ F_{m, n}\right)_{H^{3}_{x}L^{2}_{v}}\right|\\
     &\ge\frac{3}{4}|||F_{m, n}(t)|||^{2}-\tilde C_{6}\|F_{m, n}(t)\|^{2}_{H^{3}_{x}L^{2}_{v}}-\mathcal R_{3}(t),
\end{align*}
with
\begin{align}\label{R-3}
\begin{split}
     \mathcal R_{3}(t)&=C_{6}\sum_{l=1}^{m}\binom{m}{l}t^{\delta_{1}l}\sqrt{(l+1)!}|||F_{m-l, n}(t)|||\cdot|||F_{m, n}(t)|||\\
     &\quad+C_{6}\sum_{p=1}^{n}\binom{n}{p}t^{\delta_{2}p}\sqrt{(p+1)!}|||F_{m, n-p}(t)|||\cdot|||F_{m, n}(t)|||\\
     &\quad+C_{6}\sum_{l=1}^{m}\sum_{p=1}^{n}\binom{m}{l}\binom{n}{p}t^{\delta_{1}l}t^{\delta_{2}p}\sqrt{(l+p+1)!}|||F_{m-l, n-p}(t)|||\cdot|||F_{m, n}(t)|||.
\end{split}
\end{align}
Combining the above results, it follows that
\begin{align*}
     &\frac{d}{dt}\left\|F_{m, n}f(t)\right\|^{2}_{H_{x}^{3}L^{2}_{v}}+\frac{2c_{0}}{(1+t)^{2}}\|\langle v\rangle^{\frac b2}F_{m, n}(t)\|^{2}_{H^{3}_{x}L^{2}_{v}}+|||F_{m, n}(t)|||^{2}\\
     &\le2(\tilde C_{6}+(C_{5})^{2})\|F_{m, n}(t)\|^{2}_{H^{3}_{x}L^{2}_{v}}+2C_{4}\|f(t)\|_{H^{3}_{x}L^{2}_{v}}|||F_{m, n}(t)|||^{2}+2\mathcal J_{1}(t)+2\mathcal J_{2}(t)\\
     &\quad+2C_{4}\|F_{m, n}(t)\|_{H^{3}_{x}L^{2}_{v}}|||\omega_{t} f(t)|||\cdot|||F_{m, n}(t)|||+2\mathcal R_{1}(t)+2\mathcal R_{2}(t)+2\mathcal R_{3}(t).
\end{align*}
For all $0<t\le T$, integrating from~0~ to $t$, since $\|f\|_{L^{\infty}([0, T]; H^2_xL^{2}_v(\omega_{t}))}$ small enough, then by using \eqref{t=0}, one has for all $0<\epsilon<1$
\begin{align*}
    &\left\|F_{m, n}(t)\right\|^{2}_{H_{x}^{3}L^{2}_{v}}+\frac{2c_{0}}{(1+T)^{2}}\int_{0}^{t}\|\langle v\rangle^{\frac b2}F_{m, n}(\tau)\|^{2}_{H^{3}_{x}L^{2}_{v}}d\tau+\int_{0}^{t}|||F_{m, n}(\tau)|||^{2}d\tau\\
     &\le2(\tilde C_{6}+(C_{5})^{2})\int_{0}^{t}\|F_{m, n}(\tau)\|^{2}_{H^{3}_{x}L^{2}_{v}}d\tau+4C_{4}\epsilon\int_{0}^{t}|||F_{m, n}(\tau)|||^{2}d\tau+2\int_{0}^{t}\mathcal J_{1}(\tau)d\tau\\
     &\quad+2\int_{0}^{t}\mathcal J_{2}(\tau)d\tau+2\int_{0}^{t}\mathcal R_{1}(\tau)d\tau+2\int_{0}^{t}\mathcal R_{2}(\tau)d\tau+2\int_{0}^{t}\mathcal R_{3}(\tau)d\tau+C_{4}\epsilon\|F_{m, n}\|^{2}_{L^{\infty}(]0, T]; H^{3}_{x}L^{2}_{v})},
\end{align*}
Taking $4C_{4}\epsilon\le\frac12$, then we can deduce that for all $0<t\le T$
\begin{align*}
    &\left\|F_{m, n}(t)\right\|^{2}_{H_{x}^{3}L^{2}_{v}}+\frac{4c_{0}}{(1+T)^{2}}\int_{0}^{t}\|\langle v\rangle^{\frac b2}F_{m, n}(\tau)\|^{2}_{H^{3}_{x}L^{2}_{v}}d\tau+\int_{0}^{t}|||F_{m, n}(\tau)|||^{2}d\tau\\
     &\le4(\tilde C_{6}+(C_{5})^{2})\int_{0}^{t}\|F_{m, n}(\tau)\|^{2}_{H^{3}_{x}L^{2}_{v}}d\tau+4\int_{0}^{t}\mathcal J_{1}(\tau)d\tau+4\int_{0}^{t}\mathcal J_{2}(\tau)d\tau\\
     &\quad+4\int_{0}^{t}\mathcal R_{1}(\tau)d\tau+4\int_{0}^{t}\mathcal R_{2}(\tau)d\tau+4\int_{0}^{t}\mathcal R_{3}(\tau)d\tau,
\end{align*}
this implies that for all $(m, n)\in E_{k}$
\begin{align}\label{integration-k}
\begin{split}
    &\frac{1}{\left((m-2)!(n-2)!\right)^{2\sigma}}\left\|F_{m, n}(t)\right\|^{2}_{H_{x}^{3}L^{2}_{v}}+\frac{1}{\left((m-2)!(n-2)!\right)^{2\sigma}}\int_{0}^{t}|||F_{m, n}(\tau)|||^{2}d\tau\\
    &\quad+\frac{4c_{0}}{(1+T)^{2}}\frac{1}{\left((m-2)!(n-2)!\right)^{2\sigma}}\int_{0}^{t}\|\langle v\rangle^{\frac b2}F_{m, n}(\tau)\|^{2}_{H^{3}_{x}L^{2}_{v}}d\tau\\
     &\le\sup_{(m, n)\in E_{k}}\frac{4(\tilde C_{6}+(C_{5})^{2})}{\left((m-2)!(n-2)!\right)^{2\sigma}}\int_{0}^{t}\|F_{m, n}(\tau)\|^{2}_{H^{3}_{x}L^{2}_{v}}d\tau+J_{1}+J_{2}+R_{1}+R_{2}+R_{3},
\end{split}
\end{align}
with
\begin{align*}
     J_{s}=\sup_{(m, n)\in E_{k}}\frac{4}{\left((m-2)!(n-2)!\right)^{2\sigma}}\int_{0}^{t}\mathcal J_{s}(\tau)d\tau, \quad s=1, 2,
\end{align*}
and 
\begin{align*}
     R_{s}=\sup_{(m, n)\in E_{k}}\frac{4}{\left((m-2)!(n-2)!\right)^{2\sigma}}\int_{0}^{t}\mathcal R_{s}(\tau)d\tau, \quad s=1, 2, 3.
\end{align*}
We estimate the terms of the right-hand side of \eqref{integration-k} by the following lemmas.
\begin{lemma}\label{J1-J2}
    Assume that $f$ satisfies \eqref{hypothesis}, then for all $0<t\le T$
\begin{align}\label{J1-1}
\begin{split}
     J_{1}
     &\le\frac{c_{0}}{(1+T)^{2}}\sup_{(m, n)\in E_{k}}\frac{1}{\left((m-2)!(n-2)!\right)^{2\sigma}}\int_{0}^{t}\left\|\langle v\rangle^{\frac{b}{2}}F_{m, n}(\tau)\right\|_{H^{3}_{x}L^{2}_{v}}^{2}d\tau\\
     &\quad+\sup_{(m, n)\in E_{k}}\frac{C_{1}}{\left(64(m-2)!(n-2)!\right)^{2\sigma}}\int_{0}^{t}\left\|F_{m, n}(\tau)\right\|_{H^{3}_{x}L^{2}_{v}}^{2}d\tau+\left(A_{1}A^{k-\frac32}\right)^{2\sigma}.
\end{split}
\end{align}
\begin{align}\label{J2-1}
\begin{split}
     J_{2}
     &\le\frac{c_{0}}{(1+T)^{2}}\sup_{(m, n)\in E_{k}}\frac{1}{\left((m-2)!(n-2)!\right)^{2\sigma}}\int_{0}^{t}\left\|\langle v\rangle^{\frac{b}{2}}F_{m, n}(\tau)\right\|_{H^{3}_{x}L^{2}_{v}}^{2}d\tau\\
     &\quad+\sup_{(m, n)\in E_{k}}\frac{C_{1}}{\left(64(m-2)!(n-2)!\right)^{2\sigma}}\int_{0}^{t}\left\|F_{m, n}(\tau)\right\|_{H^{3}_{x}L^{2}_{v}}^{2}d\tau+\left(A_{2}A^{k-\frac32}\right)^{2\sigma}.
\end{split}
\end{align}
with $A_{1}, A_{2}$ depends on $c_{0}, \ b, \ \delta_{1}, \ \delta_{2}, \ C_{0}-C_{6}$ and $T$.
\end{lemma}
and
\begin{lemma}\label{R1-3}
    Assume that $f$ satisfies \eqref{m=0} and \eqref{hypothesis}, then for all $0<t\le T$
\begin{align}\label{R1}
     R_{1}\le\sup_{(m, n)\in E_{k}}\frac{1}{16\left((m-2)!(n-2)!\right)^{2\sigma}}\int_{0}^{t}|||F_{m, n}(\tau)|||^{2}d\tau+\left(A_{3}A^{k-1}\right)^{2\sigma},
\end{align}
with the constant $A_{3}$ depends on $\gamma, b$ and $C_{4}$. And
\begin{align}\label{R2}
     R_{2}\le\sup_{(m, n)\in E_{k}}\frac{1}{16\left((m-2)!(n-2)!\right)^{2\sigma}}\int_{0}^{t}|||F_{m, n}(\tau)|||^{2}d\tau+\left(A_{4}A^{k-1}\right)^{2\sigma},
\end{align}
with the constant $A_{4}$ depends on $\gamma, \ b, T, C_{0}$ and $C_{5}$. And
\begin{align}\label{R3}
     R_{3}\le\sup_{(m, n)\in E_{k}}\frac{1}{16\left((m-2)!(n-2)!\right)^{2\sigma}}\int_{0}^{t}|||F_{m, n}(\tau)|||^{2}d\tau+\left(A_{5}A^{k-1}\right)^{2\sigma},
\end{align}
with the constant $A_{5}$ depends on $\gamma, \ b$ and $C_{6}$.
\end{lemma}
{\bf End of Proof of Proposition \ref{prop 5.1}. } Plugging \eqref{J1-1}, \eqref{J2-1}, \eqref{R1}, \eqref{R2} and \eqref{R3} back into \eqref{integration-k}, it follows that for all $(m, n)\in E_{k}$
\begin{align}\label{integration-fin}
\begin{split}
    &\frac{1}{\left((m-2)!(n-2)!\right)^{2\sigma}}\left\|F_{m, n}(t)\right\|^{2}_{H_{x}^{3}L^{2}_{v}}+\frac{1}{\left((m-2)!(n-2)!\right)^{2\sigma}}\int_{0}^{t}|||F_{m, n}(\tau)|||^{2}d\tau\\
    &\quad+\frac{c_{0}}{(1+T)^{2}}\frac{1}{\left((m-2)!(n-2)!\right)^{2\sigma}}\int_{0}^{t}\|\langle v\rangle^{\frac b2}F_{m, n}(\tau)\|^{2}_{H^{3}_{x}L^{2}_{v}}d\tau\\
     &\le\sup_{(m, n)\in E_{k}}\frac{8(\tilde C_{6}+(C_{5})^{2})+C_{1}}{\left((m-2)!(n-2)!\right)^{2\sigma}}\int_{0}^{t}\|F_{m, n}(\tau)\|^{2}_{H^{3}_{x}L^{2}_{v}}d\tau+2\left(A_{0}A^{k-1}\right)^{2\sigma},
\end{split}
\end{align}
if we choose $A\ge1$, here $A_{0}=A_{1}+A_{2}+A_{3}+A_{4}+A_{5}$.
Using Gronwall inequality, one has
\begin{align*}
    &\sup_{(m, n)\in E_{k}}\frac{8(\tilde C_{6}+(C_{5})^{2})+C_{1}}{\left((m-2)!(n-2)!\right)^{2\sigma}}\int_{0}^{t}\|F_{m, n}(\tau)\|^{2}_{H^{3}_{x}L^{2}_{v}}d\tau\\
    &\le2\left(1+\left(8(\tilde C_{6}+(C_{5})^{2})+C_{1}\right)e^{8(\tilde C_{6}+(C_{5})^{2})+C_{1}T}\right)\left(A_{0}A^{k-1}\right)^{2\sigma},
\end{align*}
plugging it back into \eqref{integration-fin}, one can deduce
\begin{align*}
    &\frac{1}{\left((m-2)!(n-2)!\right)^{2\sigma}}\left\|F_{m, n}(t)\right\|^{2}_{H_{x}^{3}L^{2}_{v}}+\frac{1}{\left((m-2)!(n-2)!\right)^{2\sigma}}\int_{0}^{t}|||F_{m, n}(\tau)|||^{2}d\tau\\
    &\quad+\frac{c_{0}}{(1+T)^{2}}\frac{1}{\left((m-2)!(n-2)!\right)^{2\sigma}}\int_{0}^{t}\|\langle v\rangle^{\frac b2}F_{m, n}(\tau)\|^{2}_{H^{3}_{x}L^{2}_{v}}d\tau\\
     &\le2\left(1+\left(8(\tilde C_{6}+(C_{5})^{2})+C_{1}\right)e^{8(\tilde C_{6}+(C_{5})^{2})+C_{1}T}\right)^{2}\left(A_{0}A^{k-1}\right)^{2\sigma}.
\end{align*}
We prove then
\begin{align*}
    &\sup_{(m, n)\in E_{k}}\frac{\|F_{m, n}(t)\|^2_{H^3_xL^{2}_v}}{\left((m-2)!(n-2)!\right)^{2\sigma}}+\frac{c_{0}}{(1+T)^{2}}\sup_{(m, n)\in E_{k}}\frac{1}{\left((m-2)!(n-2)!\right)^{2\sigma}}\int_0^t\|\langle v\rangle^{\frac b2}F_{m, n}(\tau)\|^2_{H^3_xL^{2}_{v}} d\tau\\
       &\quad+\sup_{(m, n)\in E_{k}}\frac{1}{\left((m-2)!(n-2)!\right)^{2\sigma}}\int_0^t|||F_{m, n}(\tau)|||^2 d\tau\le A^{2\sigma(k-\frac 12)}, \quad \forall \ 0<t\le T,
\end{align*}
if we choose the constant $A$ such that
\begin{align*}
     A\ge2\left(1+\left(8(\tilde C_{6}+(C_{5})^{2})+C_{1}\right)e^{8(\tilde C_{6}+(C_{5})^{2})+C_{1}T}\right)A_{0}.
\end{align*}
\end{proof}

\bigskip

{\bf Proof of \eqref{Gelfand-Shilov}:} 
Setting $\lambda>2\max\{1, \frac{b-\gamma}{2b}\}$, define $\delta_{1}, \ \delta_{2}$ satisfies \eqref{delta12}. Then $\delta_2>\delta_1>\lambda$. With $\delta_1$ and  $\delta_2$ given in \eqref{delta12}, we have
\begin{equation*}
	H_{\delta_1}=\frac{1}{\delta_1+1} t^{\delta_1+1}\partial_{x_1}-t^{\delta_1}A_{+, 1}   ,\quad
  H_{\delta_2}=\frac{1}{\delta_2+ 1} t^{\delta_2+1}\partial_{x_1}-t^{\delta_2}A_{+, 1}.
\end{equation*}
Let $f$ be the smooth solution of the Cauchy problem \eqref{1-2} satisfying $\|f_{0}\|_{H^{3}_{x}L^{2}_{v}(\omega_{0})}$ small, from \eqref{linear combination-H}, then for all $\alpha_{1}, m\in\mathbb N$ and $0<t\le T$
\begin{equation}\label{A}
\begin{split}
     &t^{(\lambda+1)\alpha_{1}+\lambda m}\|\omega_{t}\partial^{\alpha_{1}}_{x_{1}}A^{m}_{+, 1}f(t)\|_{H^{3}_{x}L^{2}_{v}}=\|\omega_{t}(T_{1}+T_{2})^{\alpha_{1}}(T_{3}+T_{4})^{m}f(t)\|_{H^{3}_{x}L^{2}_{v}}\\
     &\le\sum_{j=0}^{\alpha_{1}}\sum_{k=0}^{m}\binom{\alpha_{1}}{j}\binom{m}{k}\left|\frac{(\delta_2+ 1)(\delta_1+1)}{\delta_2-\delta_1}\right|^{\alpha_{1}+m}(T+1)^{(\delta_{1}-\delta_{2})(\alpha_{1}-j+m-k)}\|\omega_{t} H^{j+k}_{\delta_{1}}H^{\alpha_{1}+m-j-k}_{\delta_{2}}f(t)\|_{H^{3}_{x}L^{2}_{v}}.
\end{split}
\end{equation}
From Proposition \ref{prop 5.1}, From Proposition \ref{prop 5.1}, we have that for all $\alpha_{1}, m\in\mathbb Z_{+}$
\begin{align*}
       &\sup_{(p, q)\in E_{m+\alpha_{1}}}\frac{1}{\left((p-2)!(q-2)!\right)^{2\sigma}}\|\omega_{t} H^{p}_{\delta_{1}}H^{q}_{\delta_{2}}f(t)\|^2_{H^3_xL^{2}_v}\le A^{2\sigma(m+\alpha_{1}-\frac 12)}, \quad \forall \ 0<t\le T,
\end{align*}
this yields that for all $\alpha_{1}, m\in\mathbb Z_{+}$
\begin{align*}
       &\frac{1}{\left((j+k-2)!(\alpha_{1}+m-j-k-2)!\right)^{2\sigma}}\|\omega_{t} H^{j+k}_{\delta_{1}}H^{\alpha_{1}+m-j-k}_{\delta_{2}}f(t)\|_{H^{3}_{x}L^{2}_{v}}^{2}\le A^{2\sigma(m+\alpha_{1}-\frac 12)}, \quad \forall \ 0<t\le T,
\end{align*}
thus, we have that for all $0<t\le T$ and $\alpha_{1}, m\in\mathbb Z_{+}$
\begin{equation*}
\begin{split}
     \|\omega_{t} H^{j+k}_{\delta_{1}}H^{\alpha_{1}+m-j-k}_{\delta_{2}}f(t)\|_{H^{3}_{x}L^{2}_{v}}&\le \left(A^{\alpha_1+m-\frac12}(j+k-2)!(\alpha_{1}+m-j-k-2)!\right)^{\sigma}\\
     &\le \left(A^{\alpha_{1}+m-\frac12}(\alpha_{1}+m)!\right)^{\sigma},
\end{split}
\end{equation*}
with $j=0, 1, \cdots, \alpha_{1}$ and $k=0, 1, \cdots, m$, here we use the fact that $p!q!\le(p+q)!$. Plugging it back into \eqref{A}, since $\delta_{1}>\delta_{2}$ and $A\ge1$, then one can deduce that for all $0<t\le T$
\begin{equation*}
\begin{split}
     &t^{(\lambda+1)\alpha_{1}+\lambda m}\|\omega_{t}\partial^{\alpha_{1}}_{x_{1}}A^{m}_{+, 1}f(t)\|_{H^{3}_{x}L^{2}_{v}}\\
     &\le\sum_{j=0}^{\alpha_{1}}\sum_{k=0}^{m}\binom{\alpha_{1}}{j}\binom{m}{k}\left(\frac{(\delta_2+ 1)(\delta_1+1)}{\delta_2-\delta_1}\right)^{\alpha_{1}+m}t^{(\delta_{1}-\delta_{2})(\alpha_{1}+m)}\left(A^{\alpha_{1}+m-\frac12}(\alpha_{1}+m)!\right)^{\sigma}\\
     &\le\left(2A^{\sigma}(T+1)^{\delta_{1}-\delta_{2}}\frac{(\delta_2+ 1)(\delta_1+1)}{\delta_2-\delta_1}\right)^{\alpha_{1}+m} \left((\alpha_{1}+m)!\right)^{\sigma}.
\end{split}
\end{equation*}
Similarly, the above inequality is also true for $\partial^{\alpha_{1}}_{x_{j}}A^{m}_{+, j}$ with $j=2, 3$, and obtain
\begin{equation*}
\begin{split}
     &\|\omega_{t}{\partial_x^{\alpha} \nabla_{\mathcal H_{+}}^{m}  f(t)}\|^{2}_{H^3_{x}L^2_v}=\sum_{|\beta|=m}\frac{m!}{\beta!}\|\omega_{t}{\partial_x^{\alpha} A_+^{\beta}  f(t)}\|^2_{H^3_{x}L^2_v}\\
     &\le\sum_{|\beta|=m}\frac{m!}{\beta!}\left(\sum_{j=1}^3\|\omega_{t}{\partial^{|\alpha|}_{x_{j}}A^{m}_{+, j}f(t)}\|_{H^3_{x}L^2_v}\right)^2\le 3^{m}\left(\sum_{j=1}^3\|\omega_{t}{\partial^{|\alpha|}_{x_{j}}A^{m}_{+, j}f(t)}\|_{H^3_{x}L^2_v}\right)^2,
\end{split}
\end{equation*}
here we use
$$\sum_{|\beta|=m}\frac{m!}{\beta!}=3^{m}, \quad \beta\in\mathbb N^{3}.$$
And therefore,  for any $0<t\le T$
\begin{align*}
\begin{split}
     &t^{(\lambda+1)|\alpha|+ \lambda m} \|\omega_{t}{\partial_x^{\alpha} \nabla_{\mathcal H_{+}}^{m}  f(t)}\|_{H^3_{x}L^2_v}
     \le t^{(\lambda+1)|\alpha|+ \lambda m}3^{\frac{m}{2}}\sum_{j=1}^{3}\|\omega_{t}\partial^{|\alpha|}_{x_{j}}A^{m}_{+, j}f(t)\|_{H^{3}_{x}L^{2}_{v}}\\
     &\le3\left(6A^{\sigma}2^{\delta_1-\delta_2}\frac{(\delta_2+ 1)(\delta_1+1)}{\delta_1-\delta_2} \right)^{|\alpha|+m}\left((|\alpha|+m)!\right)^{\sigma}\leq C^{|\alpha|+m+1} \left( (|\alpha|+m)!\right)^{\sigma},
\end{split}
\end{align*}
here $C\ge\max\{3, 6A^{\sigma}2^{\delta_1-\delta_2}\frac{(\delta_2+ 1)(\delta_1+1)}{\delta_1-\delta_2} \}$. 

\section{Proofs of technical lemmas}

In this section, we prove Lemma \ref{J1-J2} and Lemma \ref{R1-3}.
\bigskip

{\bf Proof of Lemma \ref{J1-J2}.} Applying \eqref{interpolation11} with $g=\omega_{t}A_{+, 1}H^{m-1}_{\delta_{1}}H^{n}_{\delta_{2}}f$ and $\varepsilon_{1}=\varepsilon^{2}t^{-2(\delta_{j}-1)}t^{2\delta_{1}}m^{-2}$, similar to \eqref{111111}, we can obtain that 
\begin{align*}
     &\mathcal J_{1}(t)
     \le\varepsilon\left\|F_{m, n}\right\|_{H^{3}_{x}L^{2}_{v}}^{2}+2\varepsilon\delta_{j}^{2}\left(\frac{\delta_1+ 1}{\delta_2-\delta_1}\right)^{2}\|\langle v\rangle^{\frac{b}{2}}F_{m, n}\|^{2}_{H^{3}_{x}L^{2}_{v}}\\
     &\quad+2\varepsilon\delta_{j}^{2}t^{2(\delta_{1}-\delta_{2})}\left(\frac{\delta_2+ 1}{\delta_2-\delta_1}\right)^{2}\|\langle v\rangle^{\frac{b}{2}}F_{m-1, n+1}\|_{H^{3}_{x}L^{2}_{v}}^{2}+\frac{\varepsilon^{\frac{2b}{b-\gamma}(1-\frac{b}{b-\gamma})-1}\delta_{j}^{2}m^{\frac{b-\gamma}{b}} t^{\theta}}{C_{1}}|||F_{m-1, n}|||^{2}.
\end{align*}
taking $0<\varepsilon<1$ small enough such that 
$$\varepsilon=\min\left\{\frac{1}{4C_{\gamma, b, \delta_{1}, \delta_{2}, T}}, \frac{C_{1}}{64C_{\gamma, b, \delta_{1}, \delta_{2}, T}}\left(\frac{c_{0}}{(1+T)^{2}}\right)^{-\frac\gamma b}\right\}.$$
Thus, by using the hypothesis \eqref{hypothesis}, we can get 
\begin{align*}
     J_{1}
     &\le\frac{c_{0}}{(1+T)^{2}}\sup_{(m, n)\in E_{k}}\frac{1}{\left((m-2)!(n-2)!\right)^{2\sigma}}\int_{0}^{t}\left\|\langle v\rangle^{\frac{b}{2}}F_{m, n}(\tau)\right\|_{H^{3}_{x}L^{2}_{v}}^{2}d\tau\\
     &\quad+\sup_{(m, n)\in E_{k}}\frac{C_{1}}{\left(64(m-2)!(n-2)!\right)^{2\sigma}}\int_{0}^{t}\left\|F_{m, n}(\tau)\right\|_{H^{3}_{x}L^{2}_{v}}^{2}d\tau\\
     &\quad+C_{8}\sup_{(m, n)\in E_{k}}\frac{m^{2\sigma}}{\left((m-2)!(n-2)!\right)^{2\sigma}}\int_{0}^{t}\left\|F_{m-1, n}(\tau)\right\|_{H^{3}_{x}L^{2}_{v}}^{2}d\tau\\
     &\le\frac{c_{0}}{(1+T)^{2}}\sup_{(m, n)\in E_{k}}\frac{1}{\left((m-2)!(n-2)!\right)^{2\sigma}}\int_{0}^{t}\left\|\langle v\rangle^{\frac{b}{2}}F_{m, n}(\tau)\right\|_{H^{3}_{x}L^{2}_{v}}^{2}d\tau\\
     &\quad+\sup_{(m, n)\in E_{k}}\frac{C_{1}}{\left(64(m-2)!(n-2)!\right)^{2\sigma}}\int_{0}^{t}\left\|F_{m, n}(\tau)\right\|_{H^{3}_{x}L^{2}_{v}}^{2}d\tau+\left(A_{1}A^{k-\frac32}\right)^{2\sigma}.
\end{align*}
Similarly, one can deduce that for all $(m, n)\in E_{k}$
\begin{align*}
     J_{2}
     &\le\frac{c_{0}}{(1+T)^{2}}\sup_{(m, n)\in E_{k}}\frac{1}{\left((m-2)!(n-2)!\right)^{2\sigma}}\int_{0}^{t}\left\|\langle v\rangle^{\frac{b}{2}}F_{m, n}(\tau)\right\|_{H^{3}_{x}L^{2}_{v}}^{2}d\tau\\
     &\quad+\sup_{(m, n)\in E_{k}}\frac{C_{1}}{\left(64(m-2)!(n-2)!\right)^{2\sigma}}\int_{0}^{t}\left\|F_{m, n}(\tau)\right\|_{H^{3}_{x}L^{2}_{v}}^{2}d\tau+\left(A_{2}A^{k-\frac32}\right)^{2\sigma}.
\end{align*}

\bigskip

{\bf Proof of \eqref{R1}.} From the Cauchy-Schwarz inequality, one has
\begin{align*}
     \int_{0}^{t}\mathcal R_{1}(\tau)d\tau
     &\le48\left(C_{4}\sum_{l=1}^{m}\sum_{p=0}^{n-1}\binom{m}{l}\binom{n}{p}\|F_{l, p}\|_{L^{\infty}(]0, T]; H^{3}_{x}L^{2}_{v})}\left(\int_{0}^{t}|||F_{m-l, n-p}(\tau)|||^{2}\right)^{\frac12}\right)^{2}\\
     &\quad+48\left(C_{4}\sum_{l=1}^{m-1}\binom{m}{l}\|F_{l, n}\|_{L^{\infty}(]0, T]; H^{3}_{x}L^{2}_{v})}\left(\int_{0}^{t}|||F_{m-l, 0}(\tau)|||^{2}d\tau\right)^{\frac12}\right)^{2}\\
     &\quad+48\left(C_{4}\sum_{p=1}^{n}\binom{n}{p}\|F_{0, p}\|_{L^{\infty}(]0, T]; H^{3}_{x}L^{2}_{v})}\left(\int_{0}^{t}|||F_{m, n-p}(\tau)|||^{2}d\tau\right)^{\frac12}\right)^{2}\\
     &\quad+\frac{1}{64}\int_{0}^{t}|||F_{m, n}(\tau)|||^{2}d\tau=48(R_{1, 1})^{2}+48(R_{1, 2})^{2}+48(R_{1, 3})^{2}+\frac{1}{64}\int_{0}^{t}|||F_{m, n}(\tau)|||^{2}d\tau.
\end{align*}
It follows from the hypothesis \eqref{hypothesis} that for all $(m, n)\in E_{k}$
\begin{align*}
     R_{1, 1}&=C_{4}\sum_{l=1}^{m}\sum_{p=0}^{n-1}\frac{m!\left((l-2)!(m-l-2)!\right)^{\sigma}}{l!(m-l)!}\frac{n!\left((p-2)!(n-p-2)!\right)^{\sigma}}{p!(n-p)!}\\
     &\quad\times\frac{\|F_{l, p}\|_{L^{\infty}(]0, T]; H^{3}_{x}L^{2}_{v})}}{\left((l-2)!(m-l-2)!\right)^{\sigma}}\frac{\left(\int_{0}^{t}|||F_{m-l, n-p}(\tau)|||^{2}d\tau \right)^{\frac12}}{\left((p-2)!(n-p-2)!\right)^{\sigma}}\\
     &\le C_{4}\sum_{l=1}^{m}\sum_{p=0}^{n-1}\frac{m!\left((l-2)!(m-l-2)!\right)^{\sigma}}{l!(m-l)!}\frac{n!\left((p-2)!(n-p-2)!\right)^{\sigma}}{p!(n-p)!}A^{\sigma(k-1)}.
\end{align*}
Since $p!q!\le(p+q)!$ for all $p, q\in\mathbb N$, then
\begin{align}\label{sum1}
\begin{split}
     \sum_{l=2}^{m-2}\frac{m!\left((l-2)!(m-l-2)!\right)^{\sigma}}{l!(m-l)!}&=(m-2)!\sum_{l=2}^{m-2}\frac{m(m-1)\left((l-2)!(m-l-2)!\right)^{\sigma-1}}{l(l-1)(m-l)(m-l-1)}\\
     &\le\left((m-2)!\right)^{\sigma}\sum_{l=2}^{m-2}\frac{m(m-1)}{l(l-1)(m-l)(m-l-1)}\\
     &\le 16\left((m-2)!\right)^{\sigma}.
\end{split}     
\end{align}
Hence, for all $(m, n)\in E_{k}$
\begin{align*}
     R_{1, 1}\le C_{4}\left(25^{2}A^{k-1}(m-2)!(n-2)!\right)^{\sigma}.
\end{align*}
Similarly, one can deduce that for all $(m, n)\in E_{k}$
\begin{align*}
     R_{1, 2}\le C_{4}\left(25A^{k-1}(m-2)!(n-2)!\right)^{\sigma}, \quad R_{1, 3}\le C_{4}\left(25A^{k-1}(m-2)!(n-2)!\right)^{\sigma}.
\end{align*}
Combining these results, we have that for all $(m, n)\in E_{k}$
\begin{align*}
     &\frac{4}{\left((m-2)!(n-2)!\right)^{2\sigma}}\int_{0}^{t}\mathcal R_{1}(\tau)d\tau\\
     &\le\frac{1}{16\left((m-2)!(n-2)!\right)^{2\sigma}}\int_{0}^{t}|||F_{m, n}(\tau)|||^{2}d\tau+\frac{4}{\left((m-2)!(n-2)!\right)^{2\sigma}}\left(48(R_{1, 1})^{2}+48(R_{1, 2})^{2}+48(R_{1, 3})^{2}\right)\\
     &\le\sup_{(m, n)\in E_{k}}\frac{1}{16\left((m-2)!(n-2)!\right)^{2\sigma}}\int_{0}^{t}|||F_{m, n}(\tau)|||^{2}d\tau+\left(A_{3}A^{k-1}\right)^{2\sigma},
\end{align*}
this implies
\begin{align*}
R_{1}\le\sup_{(m, n)\in E_{k}}\frac{1}{16\left((m-2)!(n-2)!\right)^{2\sigma}}\int_{0}^{t}|||F_{m, n}(\tau)|||^{2}d\tau+\left(A_{3}A^{k-1}\right)^{2\sigma},
\end{align*}
with the comstant $A_{3}$ depends on $\gamma, \ b$ and $C_{4}$.

\bigskip

{\bf Proof of \eqref{R2}.} Since $(p+q)!\le2^{p+q}p!q!$ for all $p, q\in\mathbb N$, taking $A\ge 2C_{0}(T+1)^{2(\delta_{1}+\delta_{2})}$, then follows from the Cauchy-Schwarz inequality and the fact $\sigma\ge1$ that for all $0<t\le T$ and $(m, n)\in E_{k}$
\begin{align*}
     \int_{0}^{t}\mathcal R_{2}(\tau)d\tau&\le 64\left(4C_{5}\sqrt{T}\sum_{l=0}^{m-1}\sum_{p=0}^{n}\binom{m}{l}\binom{n}{p}A^{\sigma(k-l-p-1)}\sqrt{(m-l+1)!(n-p+2)!}\|F_{l, p}\|_{L^{\infty}(]0, T]; H^{3}_{x}L^{2}_{v})}\right)^{2}\\
     &\quad+64\left(4C_{5}\sqrt{T}\sum_{p=0}^{n-1}\binom{n}{p}A^{\sigma(n-p-1)}\sqrt{(n-p)!}\|F_{m, p}\|_{L^{\infty}(]0, T]; H^{3}_{x}L^{2}_{v})}\right)^{2}\\
     &\quad+\frac{1}{64}\int_{0}^{t}|||F_{m, n}(\tau)|||^{2}d\tau=64(R_{2, 1})^{2}+64(R_{2, 2})^{2}+\frac{1}{64}\int_{0}^{t}|||F_{m, n}(\tau)|||^{2}d\tau,
\end{align*}
For all $(m, n)\in E_{k}$, by using \eqref{m=0}, we can write $A_{2, 1}$ as follows
\begin{align*}
     R_{2, 1}&=4C_{5}\sqrt{T}\sum_{l=0}^{m-1}\sum_{p=1}^{n}\frac{m!((l-2)!)^{\sigma}\sqrt{(m-l+1)!}}{l!(m-l)!}\frac{n!((p-2)!)^{\sigma}\sqrt{(n-p+2)!}}{p!(n-p)!}\\
     &\qquad\times A^{\sigma(k-l-p-1)}\frac{\|F_{l, p}\|_{L^{\infty}(]0, T]; H^{3}_{x}L^{2}_{v})}}{((l-2)!(p-2)!)^{\sigma}}\\
     &\quad+4C_{5}\sqrt{T}\sum_{l=1}^{m-1}\frac{m!((l-2)!)^{\sigma}\sqrt{(m-l+1)!}}{l!(m-l)!}A^{\sigma(k-l-1)}\sqrt{(n+2)!}\frac{\|F_{l, 0}\|_{L^{\infty}(]0, T]; H^{3}_{x}L^{2}_{v})}}{((l-2)!)^{\sigma}}\\
     &\quad+4\epsilon C_{5}B\sqrt{T}A^{\sigma(k-1)}\sqrt{(m-l+1)!(n+2)!},
\end{align*}
since $\sqrt{(p+2)!}\le16(p-2)!$ for all $p\in\mathbb N$, then follows  from \eqref{sum1} that 
\begin{align*}
     \sum_{l=0}^{m-1}\frac{m!((l-2)!)^{\sigma}\sqrt{(m-l+1)!}}{l!(m-l)!}\le16\left(25(m-2)!\right)^{\sigma}, \quad \sum_{p=1}^{n}\frac{n!((p-2)!)^{\sigma}\sqrt{(n-p+2)!}}{p!(n-p)!}\le16\left(25(n-2)!\right)^{\sigma},
\end{align*}
applying the hypothesis \eqref{hypothesis} and taking $\epsilon\le\frac14$, one has for all $(m, n)\in E_{k}$
\begin{align*}
     R_{2, 1}&\le8\cdot16^{2}C_{5}\sqrt{T}\left(25^{2}A^{k-\frac32}(m-2)!(n-2)!\right)^{\sigma}+16^{2}C_{5}B\sqrt{T}\left(25^{2}A^{k-1}(m-2)!(n-2)!\right)^{\sigma}.
\end{align*}
Similarly, one can get that for all $(m, n)\in E_{k}$
\begin{align*}
     R_{2, 2}&\le4\cdot16^{2}C_{5}\sqrt{T}\left(25A^{k-\frac32}(m-2)!(n-2)!\right)^{\sigma}+16^{2}C_{5}B\sqrt{T}\left(25^{2}A^{k-1}(m-2)!(n-2)!\right)^{\sigma}.
\end{align*}
And therefore, for all $(m, n)\in E_{k}$
\begin{align*}
     &\frac{4}{\left((m-2)!(n-2)!\right)^{2\sigma}}\int_{0}^{t}\mathcal R_{2}(\tau)d\tau\\
     &\le\frac{1}{16\left((m-2)!(n-2)!\right)^{2\sigma}}\int_{0}^{t}|||F_{m, n}(\tau)|||^{2}d\tau+\frac{4}{\left((m-2)!(n-2)!\right)^{2\sigma}}\left(64(R_{2, 1})^{2}+48(R_{2, 2})^{2}\right)\\
     &\le\sup_{(m, n)\in E_{k}}\frac{1}{16\left((m-2)!(n-2)!\right)^{2\sigma}}\int_{0}^{t}|||F_{m, n}(\tau)|||^{2}d\tau+\left(A_{4}A^{k-1}\right)^{2\sigma},
\end{align*}
this implies
\begin{align*}
     R_{2}\le\sup_{(m, n)\in E_{k}}\frac{1}{16\left((m-2)!(n-2)!\right)^{2\sigma}}\int_{0}^{t}|||F_{m, n}(\tau)|||^{2}d\tau+\left(A_{4}A^{k-1}\right)^{2\sigma},
\end{align*}
with the comstant $A_{4}$ depends on $\gamma, \ b, T, C_{0}$ and $C_{5}$.

\bigskip

{\bf Proof of \eqref{R3}.} Taking $A\ge 2C_{0}(T+1)^{2(\delta_{1}+\delta_{2})}$, then follows from the Cauchy-Schwarz inequality and the fact $\sigma\ge1$, one has for all $0<t\le T$ and $(m, n)\in E_{k}$
\begin{align*}
     \int_{0}^{t}\mathcal R_{3}(\tau)d\tau&\le38\left(C_{6}\sum_{l=1}^{m}\binom{m}{l}A^{\sigma(l-1)}\sqrt{(l+1)!}\int_{0}^{t}|||F_{m-l, n}(\tau)|||^{2}d\tau\right)^{2}\\
     &\quad+48\left(C_{6}\sum_{p=1}^{n}\binom{n}{p}A^{\sigma(p-1)}\sqrt{(p+1)!}\int_{0}^{t}|||F_{m, n-p}(\tau)|||^{2}d\tau)^{\frac12}\right)^{2}\\
     &\quad+48\left(C_{6}\sum_{l=1}^{m}\sum_{p=1}^{n}\binom{m}{l}\binom{n}{p}A^{\sigma(l+p-1)}\sqrt{l!(p+1)!}\int_{0}^{t}|||F_{m-l, n-p}(\tau)|||^{2}d\tau\right)^{2}\\
     &\quad+\frac{1}{64}\int_{0}^{t}|||F_{m, n}(\tau)|||^{2}d\tau=48(R_{3, 1})^{2}+48(R_{3, 2})^{2}+48(R_{3, 3})^{2}+\frac{1}{64}\int_{0}^{t}|||F_{m, n}(\tau)|||^{2}d\tau.
\end{align*}
Similar to the discussion in $R_{2, 1}$, we can get that for all $(m, n)\in E_{k}$
\begin{align*}
     R_{3, 1}\le8\cdot16^{2}C_{6}\left(25A^{k-\frac32}(m-2)!(n-2)!\right)^{\sigma}, \quad R_{3, 2}\le8\cdot16^{2}C_{6}\left(25A^{k-\frac32}(m-2)!(n-2)!\right)^{\sigma},
\end{align*}
and
\begin{align*}
     R_{3, 3}\le8\cdot16^{2}C_{6}\left(25^{2}A^{k-\frac32}(m-2)!(n-2)!\right)^{\sigma}.
\end{align*}
And therefore, for all $(m, n)\in E_{k}$
\begin{align*}
     &\frac{4}{\left((m-2)!(n-2)!\right)^{2\sigma}}\int_{0}^{t}\mathcal R_{3}(\tau)d\tau\le\sup_{(m, n)\in E_{k}}\frac{1}{16\left((m-2)!(n-2)!\right)^{2\sigma}}\int_{0}^{t}|||F_{m, n}(\tau)|||^{2}d\tau+\left(A_{5}A^{k-1}\right)^{2\sigma},
\end{align*}
this implies
\begin{align*}
     R_{3}\le\sup_{(m, n)\in E_{k}}\frac{1}{16\left((m-2)!(n-2)!\right)^{2\sigma}}\int_{0}^{t}|||F_{m, n}(\tau)|||^{2}d\tau+\left(A_{5}A^{k-1}\right)^{2\sigma},
\end{align*}
with the constant $A_{5}$ depends on $\gamma, \ b$ and $C_{6}$.

\bigskip
\noindent {\bf Acknowledgements.} This work was supported by the NSFC (No.12031006) and the Fundamental
Research Funds for the Central Universities of China.

\end{document}